\newcommand{\R}{\textnormal{I}\!\textnormal{R}}
\newcommand{\N}{\textnormal{I}\!\textnormal{N}}
\newcommand{\secao}[1]{\section{#1}\setcounter{equation}{0}}
\newtheorem{theorem}{Theorem}[section]
\newtheorem{proposition}[theorem]{Proposition}
\newtheorem{remark}[theorem]{Remark}
\newtheorem{lemma}[theorem]{Lemma}
\numberwithin{equation}{section}
\begin{document}
	\title[Sharp well-posedness]{Sharp well-posedness for a coupled system of  mKdV  type equations}
	\author{Xavier Carvajal}
	\address{Instituto de Matem\'atica, Universidade Federal do Rio de Janeiro-UFRJ. Ilha do Fund\~{a}o,
		21945-970. Rio de Janeiro-RJ, Brazil}
	\email{carvajal@im.ufrj.br}
	\author[L. Esquivel]{Liliana Esquivel}
	\address{Gran Sasso Science Institute, CP 67100,  L' Aquila, Italia.}
	\email{liliana.esquivel@gssi.it}
	\author{Raphael Santos}
	\address{Universidade Federal do Rio de Janeiro, Campus Macaé, Brazil}
	\email{raphaelsantos@macae.ufrj.br}
	
	\keywords{Korteweg-de Vries equation, Cauchy problem,
		local   well-posedness}
	\subjclass[2000]{35Q35, 35Q53}
	
	\begin{abstract}
		We consider the initial value problem associated to a  system consisting     modified Korteweg-de Vries  type equations 
		\begin{equation*}
			\begin{cases}
				\partial_tv + \partial_x^3v + \partial_x(vw^2) =0,&v(x,0)=\phi(x),\\
				\partial_tw + \alpha\partial_x^3w + \partial_x(v^2w) =0,& w(x,0)=\psi(x),
			\end{cases}
		\end{equation*}
		and prove  the local well-posedness results for   given data in low regularity Sobolev spaces
		$H^{s}(\R)\times H^{k}(\R)$, $s,k> -\frac12$ and $|s-k|\leq 1/2$, for $\alpha\neq 0,1$. Also, we prove that: (I) the solution mapping that takes initial data to the solution fails to be $C^3$ at the origin, when $s<-1/2$ or $k<-1/2$ or $|s-k|>2$; (II) the  trilinear estimates used in the proof of the local well-posedness theorem fail to hold when (a) $s-2k>1$ or $k<-1/2$ (b) $k-2s>1$ or $s<-1/2$; (c) $s=k=-1/2 $; (III) the local well-posedness result is sharp in a sense that  we can not reduce the proof of the trilinear estimates, proving some related bilinear estimates (as in \protect\cite{Tao2001}).
	\end{abstract}
	
	\maketitle

	\secao{Introduction}
	This paper is devoted to the initial value problem (IVP) for the  system of the  modified Korteweg-de Vries (mKdV)-type equations
	\begin{equation}\label{ivp-sy}
		\begin{cases}
			\partial_tv + \partial_x^3v + \partial_x(vw^2) =0,&v(x,0)=\phi(x),\\
			\partial_tw + \alpha\partial_x^3w + \partial_x(v^2w) =0,& w(x,0)=\psi(x),
		\end{cases}
	\end{equation}
	where   $(x, t) \in \R\times \R$; $v=v(x,t)$ and $w= w(x, t)$ are real-valued functions,  and $\alpha\in\R$ is a constant. 
	
	For $\alpha=1$, among a vast class of nonlinear evolution equations, the related system was studied by \protect\cite{AKN1973}, in the context of inverse scattering, showing that this method provides a means of solution of the associated IVP. For existence and estability of solitary waves to the system  \protect\eqref{ivp-sy} we refer the works   \protect\cite{AAM1999} and \protect\cite{Montenegro1995}.

	The well-posedness for the IVP \eqref{ivp-sy} with initial data in the classical Sobolev spaces $H^s(\R)\times H^k(\R)$ was studied by many authors. In 1995, following Kenig, Ponce and Vega \protect\cite{KPV1993},  using smoothing properties of the group, Maximal functions ans Strichartz estimates,  Montenegro  \protect\cite{Montenegro1995} proved that the IVP (\ref{ivp-sy}) with $\alpha =1$ is locally
	well-posed for given data $(\phi, \psi)$ in $H^s(\R)\times H^s(\R)$, $s\geq \frac{1}{4}$. He also proved global well-posedness for given data in $H^s(\R)\times H^s(\R)$, $s\geq 1$, using the conservation laws
	\begin{equation*}\label{con21}
		I_1(v, w) := \int_{\R} (v^2 +w^2)\,dx
		\ \ \textrm{and} \ \ 
		I_2(v, w) := \int_{\R} (v_x^2 + w_x^2 - v^2w^2)\, dx.
	\end{equation*}
	We note that the approach in \protect\cite{KPV1993} implies the local well-posedness for $s\geq 1/4$, when $0<\alpha< 1$. 
	
	In 1999, Alarcon, Angulo and Montenegro  \protect\cite{AAM1999} studied some properties of the solutions for the system of nonlinear evolution equation 
	\begin{equation}
		\left\{
		\begin{array}{c} \label{1}
			\partial_tv+\partial_x^3 v+\partial_x(v^{p}u^{p+1})=0, \ \ u(x,0)=\phi(x)\\
			\\
			\partial_tu+ \partial_x^3 u+\partial_x(v^{p+1}u^p)=0, \ \ v(x,0)=\varphi(x)
		\end{array}\right.
	\end{equation}
	for $p\geq 1$. In this work they proved that \eqref{1} has a family of solitary wave solutions, similar to those found for Korteweg de Vries(KdV)-type equations and that it can be stable or unstable depending on the range of $p$.  We observe the system \eqref{ivp-sy}, with $\alpha=1$, is a special case of \eqref{1} with $p=1$.  In this approach they also uses the smoothing property of the linear group combined with the $L_x^pL_t^q$ Strichartz estimates and maximal function estimates.
	
	In 2001, Tao \protect\cite{Tao2001} shows that the trilinear estimate is valid for $s\geq 1/4$
	\begin{equation}
		\label{Tri-ln}
		\|\partial_x(uvw)\|_{X_{s,b'}}\lesssim \|u\|_{X_{s,b}}\|v\|_{X_{s,b}}\|w\|_{X_{s,b}}
	\end{equation}
	when $X_{s,b}$ is the Bourgain space (see \protect\cite{Bourgain1993}). This leads us to get also the local well-posedness for the system \eqref{ivp-sy} when $\alpha=1$, for $s\geq 1/4$ in the context of Fourier restriction norm method. It is worth noting that the local well-posedness result for the system \eqref{ivp-sy} with $\alpha =1$ is sharp and it can be justified in two different way; first the trilinear estimates fail if $s<1/4$ (see Theorem 1.7 in \protect\cite{KPV1996}). Second, the solution map is not uniformly continuous if $s<1/4$ (see Theorem 1.3 in \cite{KPV2001}). This notion of ill-posedness is a bit strong. 
	For further works in this direction, we refer  \protect\cite{CCT2003}.  
	In 2012, Corcho and Panthee in \protect\cite{CP2011} improves the global result in \cite{Montenegro1995},  getting global well-posedness in $H^s\times H^s$, for $s>1/4$, for $\alpha=1$, see also \protect\cite{Carvajal2006}. 
	
	Recently, in 2019 Carvajal and Panthee  \protect\cite{CP2019} proved local well-posedness in $H^s\times H^s$ for $s>-1/2$, when $\alpha\in (0,\,1)\cup(1,\infty)$. Also, they proved that the key trilinear estimates fails to  hold  and also the solution map is not $C^3$ at the origin, both when $s<-1/2$. Observe that this result also is sharp, considering the scaling argument $s=-1/2$ to the modified KdV equation. 
	
	Many authors studies local well-posedness  for a system with dispersive equations, when the initial data belongs to diferents Sobolev spaces, i.e., in $H^s\times H^k$, $k\neq s$  (see, e.g., Ginibre, Tsutsumi and Velo \protect\cite{GTV1997}).
	In this context,  we prove the following local well-posedness result:
	\begin{theorem}\label{loc-sys}
		Let $\alpha \neq 0,1$, $b>1/2$ and $s, k$ such that $s,k  >-\frac12$ and $|s-k|\leq 1/2$. 
		Then for any $(\phi,\psi)\in H^{s}(\R)\times H^{k}(\R)$,  there exist $\delta = \delta(\|(\phi,\psi)\|_{H^{s}\times H^{k}})$ (with $\delta(\rho)\to \infty$ as $\rho\to 0$) and a unique solution $(v,w)\in X^{\delta}_{s, b}\times X^{\alpha, \delta}_{k, b}$ to the IVP \eqref{ivp-sy} in the time interval $[0, \delta]$. Moreover, the solution satisfies the estimate
		\begin{equation*}\label{est-1s}
			\|(v, w)\|_{X^{\delta}_{s, b}\times X^{\alpha, \delta}_{k, b}}\lesssim \|(\phi,\psi)\|_{H^{s}\times H^{k}},
		\end{equation*}
		where the norms $\|\cdot\|_{X^{ \delta}_{s, b}}$ and  $\|\cdot\|_{X^{\alpha, \delta}_{s, b}}$ are defined in \eqref{bnorm}.
	\end{theorem}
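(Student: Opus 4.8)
The plan is to recast \eqref{ivp-sy} as a fixed point problem for its Duhamel (integral) formulation and to solve it by the contraction mapping principle in the product Bourgain space $X^{\delta}_{s,b}\times X^{\alpha,\delta}_{k,b}$. Writing $U(t)=e^{-t\partial_x^3}$ and $U_\alpha(t)=e^{-\alpha t\partial_x^3}$ for the two linear propagators (with spatial Fourier symbols $e^{it\xi^3}$ and $e^{i\alpha t\xi^3}$), a solution on $[0,\delta]$ is a fixed point of the map $\Phi(v,w)=(\Phi_1(v,w),\Phi_2(v,w))$ obtained, after multiplication by a smooth time cut-off $\eta$ supported near the origin, by
\begin{align*}
\Phi_1(v,w) &= \eta(t)\,U(t)\phi - \eta_\delta(t)\int_0^t U(t-t')\,\partial_x\bigl(vw^2\bigr)(t')\,dt',\\
\Phi_2(v,w) &= \eta(t)\,U_\alpha(t)\psi - \eta_\delta(t)\int_0^t U_\alpha(t-t')\,\partial_x\bigl(v^2w\bigr)(t')\,dt',
\end{align*}
where $\eta_\delta(t)=\eta(t/\delta)$. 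Here $X_{s,b}$ and $X^\alpha_{k,b}$ are the Bourgain spaces adapted to the symbols $\tau-\xi^3$ and $\tau-\alpha\xi^3$, with norms as in \eqref{bnorm}, and $X^{\delta}_{s,b}$, $X^{\alpha,\delta}_{k,b}$ denote their restrictions to $[0,\delta]$.

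The two standard linear ingredients are the homogeneous estimate $\|\eta\,U(t)\phi\|_{X_{s,b}}\lesssim \|\phi\|_{H^s}$ (and its $U_\alpha$-analogue in $X^\alpha_{k,b}$), and the inhomogeneous estimate with a gain of a positive power of $\delta$: for suitable $b'<\tfrac12<b$ one has $\bigl\|\eta_\delta\int_0^t U(t-t')F(t')\,dt'\bigr\|_{X_{s,b}}\lesssim \delta^{\theta}\|F\|_{X_{s,-b'}}$ with $\theta=1-b-b'>0$, and similarly for $U_\alpha$. The crucial nonlinear input, which must be established separately, is the pair of trilinear estimates
\begin{equation*}
\|\partial_x(vw^2)\|_{X_{s,-b'}}\lesssim \|v\|_{X_{s,b}}\|w\|_{X^\alpha_{k,b}}^2,\qquad
\|\partial_x(v^2w)\|_{X^\alpha_{k,-b'}}\lesssim \|v\|_{X_{s,b}}^2\|w\|_{X^\alpha_{k,b}},
\end{equation*}
which are exactly the estimates valid in the range $s,k>-\tfrac12$, $|s-k|\le\tfrac12$.

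Combining these on the ball of radius $R\sim\|(\phi,\psi)\|_{H^{s}\times H^{k}}$ in the product space, one obtains $\|\Phi(v,w)\|\lesssim R+\delta^{\theta}\|(v,w)\|^3$ together with the analogous Lipschitz bound $\|\Phi(v,w)-\Phi(\tilde v,\tilde w)\|\lesssim \delta^{\theta}(\|(v,w)\|^2+\|(\tilde v,\tilde w)\|^2)\,\|(v,w)-(\tilde v,\tilde w)\|$ for the differences (whose proof uses the same trilinear estimates after telescoping the cubic nonlinearities). Choosing $\delta=\delta(R)$ small enough, $\Phi$ maps the ball into itself and is a contraction, so it has a unique fixed point, which is the desired solution and automatically satisfies the stated a priori bound $\|(v,w)\|_{X^{\delta}_{s,b}\times X^{\alpha,\delta}_{k,b}}\lesssim\|(\phi,\psi)\|_{H^{s}\times H^{k}}$; uniqueness and continuous dependence follow from the standard arguments.

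The main obstacle is the pair of trilinear estimates. Because $\alpha\neq1$, the three factors are measured in Bourgain spaces built on different symbols $\tau-\xi^3$ and $\tau-\alpha\xi^3$, so the algebraic resonance identity controlling the interaction is genuinely $\alpha$-dependent and no longer symmetric as in Tao's $\alpha=1$ estimate \eqref{Tri-ln}. One must dyadically decompose the output frequency and the three modulation variables, bound the resulting convolution in $L^2$, and exploit the gain produced by the resonance function of the form $\xi_1^3+\alpha(\xi_2^3+\xi_3^3)-(\xi_1+\xi_2+\xi_3)^3$ (and its companion in the second estimate), while carefully distributing derivatives between the $H^{s}$ and $H^{k}$ norms. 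It is precisely this redistribution in the high-high interactions that forces $|s-k|\le\tfrac12$, whereas the threshold $s,k>-\tfrac12$ arises from the high-low interactions and matches the scaling of the single mKdV equation; the later theorems in the paper show both constraints to be sharp.
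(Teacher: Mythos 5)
Your proposal is correct and follows essentially the same route as the paper: the Duhamel formulation with time cut-offs $\eta$, $\eta_\delta$, the homogeneous and inhomogeneous linear estimates in Bourgain spaces (the paper's Lemma \ref{lemalin}), the two trilinear estimates of Proposition \ref{prop1} as the key nonlinear input, and a contraction on a ball of radius comparable to $\|(\phi,\psi)\|_{H^s\times H^k}$ with $\delta^{\theta}$ chosen small relative to that radius, which also yields the claimed bound and the property $\delta(\rho)\to\infty$ as $\rho\to 0$. The only difference is cosmetic (your sign convention for $b'$ and your closing sketch of how the trilinear estimates themselves would be proved, which the paper instead handles via Tao's multiplier-norm reduction to bilinear estimates in Section \ref{tri}).
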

	\begin{remark}\label{obsalpha}The case $\alpha \in (0,1)$ and $k=s$, with $s>-1/2$, of this theorem, was proved in \protect\cite{CP}. Also, they observed that the LWP in the case $0<\alpha<1$ is equivalent to the LWP in the case $\alpha>1$ by using the transformation $v(x,t):=\tilde{v}(\alpha^{-1/3}x, t)$ and $u(x,t):=\tilde{u}(\alpha^{-1/3}x, t)$ where 
		\begin{equation*}
			\begin{cases}
				\partial_t\tilde{v} + \frac{1}{\alpha}\partial_x^3\tilde{v} + \partial_x(\tilde{v}\tilde{w}^2) =0,&\\
				\partial_t\tilde{w} +\partial_x^3\tilde{w} + \partial_x(\tilde{v}^2\tilde{w}) =0.& 
			\end{cases}
		\end{equation*}
		So we restrict ourselves to prove Theorem \ref{loc-sys} in the case $\alpha\in(-\infty,\,0)\cup(1,\,+\infty)$.\end{remark}

	The main ingredients in the proof of Theorem \ref{loc-sys} are the new trilinear estimates:
	
	\begin{proposition}\label{prop1}
		Let $\alpha \neq 0,1$,  $b=1/2+\epsilon$, and $b'=-1/2+2\epsilon$, with $0<\epsilon<\min\left\{\frac{2s+1}{15},\frac{1}{6}\right\}$. Then  the following trilinear estimates 
		\begin{equation}\label{tlint-m1}
			\|(vw_1 w_2)_x\|_{X_{s,b'}}\lesssim  \|v\|_{X_{s,b}} \|w_1\|_{X^{\alpha}_{k,b}}\|w_2\|_{X^{\alpha}_{k,b}}
		\end{equation}
		and
		\begin{equation}\label{tlint-m2}
			\|(v_1 v_2 w)_x\|_{X^{\alpha}_{k,b'}}\lesssim  \|v_1\|_{X_{s,b}}\|v_2\|_{X_{s,b}} \|w\|_{X^{\alpha}_{k,b}},
		\end{equation}
		holds for any  $s, k$   in the following region: $s,k  >-\frac12$ and $|s-k|\leq 1/2$. Moreover \eqref{tlint-m1} also hold if 
		$s=-1/2$ and $-1/2<k$ and  \eqref{tlint-m2} also hold if $k=-1/2$ and $-1/2<s$.
	\end{proposition}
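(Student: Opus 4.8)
The plan is to follow the Fourier restriction norm (Bourgain space) strategy of \cite{Tao2001}, reducing each trilinear estimate to a weighted convolution bound and then exploiting the mismatch $\alpha\neq1$ between the two dispersion relations. First I would dualize: by Plancherel and the definitions of the Bourgain norms, after replacing the Fourier transforms by nonnegative $L^2$ densities $f,g_1,g_2,h$, the estimate \eqref{tlint-m1} is equivalent to a bound of the form
\begin{equation*}
\int_{\ast}\frac{|\xi|\,\langle\xi\rangle^{s}\,h(\xi,\tau)\,f(\xi_1,\tau_1)\,g_1(\xi_2,\tau_2)\,g_2(\xi_3,\tau_3)}{\langle\sigma\rangle^{-b'}\langle\xi_1\rangle^{s}\langle\sigma_1\rangle^{b}\langle\xi_2\rangle^{k}\langle\sigma_2\rangle^{b}\langle\xi_3\rangle^{k}\langle\sigma_3\rangle^{b}}\lesssim\|h\|_{2}\|f\|_{2}\|g_1\|_{2}\|g_2\|_{2},
\end{equation*}
where $\int_{\ast}$ denotes integration over $\xi=\xi_1+\xi_2+\xi_3$, $\tau=\tau_1+\tau_2+\tau_3$, and the modulations are $\sigma=\tau-\xi^3$, $\sigma_1=\tau_1-\xi_1^3$, $\sigma_2=\tau_2-\alpha\xi_2^3$, $\sigma_3=\tau_3-\alpha\xi_3^3$; the analogous reduction (with the roles of the two groups interchanged) handles \eqref{tlint-m2}.

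The engine of the proof is the resonance identity
\begin{align*}
\sigma-\sigma_1-\sigma_2-\sigma_3 &=-(\xi_1+\xi_2+\xi_3)^3+\xi_1^3+\alpha\xi_2^3+\alpha\xi_3^3\\
&=-3(\xi_1+\xi_2)(\xi_2+\xi_3)(\xi_3+\xi_1)-(1-\alpha)(\xi_2^3+\xi_3^3)=:H,
\end{align*}
which forces $\max\{\langle\sigma\rangle,\langle\sigma_1\rangle,\langle\sigma_2\rangle,\langle\sigma_3\rangle\}\gtrsim\langle H\rangle$. The decisive structural observation is that, precisely because $\alpha\neq1$, the extra term $(1-\alpha)(\xi_2^3+\xi_3^3)$ prevents $H$ from vanishing in the coherent configurations (such as $\xi_1+\xi_2\approx0$) that obstruct the symmetric mKdV estimate below $s=1/4$; this is exactly the gain that allows descending all the way to $s,k>-\tfrac12$, while $\alpha\neq0$ keeps the second dispersion nondegenerate. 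I would therefore perform a Littlewood--Paley decomposition $|\xi_j|\sim N_j$, $\langle\sigma_j\rangle\sim L_j$, and in each dyadic block use the lower bound on $|H|$ furnished by the identity above to absorb both the derivative factor $|\xi|$ and the negative Sobolev weights.

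The remaining ingredients are standard: the one-dimensional calculus bound $\int_{\R}\langle\tau-a\rangle^{-2b}\,d\tau\lesssim1$, valid for $b>1/2$ uniformly in $a$, together with bilinear $L^2_{x,t}$ smoothing estimates for products of two functions supported near the \emph{distinct} characteristic curves $\tau=\xi^3$ and $\tau=\alpha\xi^3$. Because $\alpha\neq1$ these curves are transverse, and the resulting bilinear gain---absent for the single mKdV equation---combines with the modulation weights via Cauchy--Schwarz in the convolution to reduce each dyadic case to a convergent geometric sum. Balancing, in the exponents, the derivative loss $|\xi|$, the resonance gain coming from $H$, and the two Sobolev orders then produces precisely the admissible region $s,k>-\tfrac12$, $|s-k|\le1/2$.

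I expect the main obstacle to be the case analysis in the near-resonant regions, where $|H|$ is smallest and the three frequencies organize into high--high--low or high--high--high interactions with a small output frequency. There one must track which dispersion relation carries the large frequencies, since the roles of $s$ and $k$ enter \eqref{tlint-m1} and \eqref{tlint-m2} asymmetrically; this asymmetry is also what singles out the endpoints, forcing \eqref{tlint-m1} to survive at $s=-\tfrac12$, $k>-\tfrac12$ and \eqref{tlint-m2} at $k=-\tfrac12$, $s>-\tfrac12$, but not simultaneously. At those endpoints the relevant dyadic sums become logarithmically critical, and closing them requires extracting the $\epsilon$ of room supplied by the choices $b=\tfrac12+\epsilon$, $b'=-\tfrac12+2\epsilon$ with $\epsilon$ constrained as in the statement.
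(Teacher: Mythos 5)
Your dualization step and the algebraic identity for $\sigma-\sigma_1-\sigma_2-\sigma_3$ are both correct, but the structural claim on which your whole plan rests --- that for $\alpha\neq 1$ the extra term $(1-\alpha)(\xi_2^3+\xi_3^3)$ ``prevents $H$ from vanishing in the coherent configurations'' --- is false. Take $\xi_2=-\xi_3=M$ with $\xi_1$ arbitrary: then $\xi_2+\xi_3=0$ and $\xi_2^3+\xi_3^3=0$, so $H\equiv 0$ on this two-parameter family for \emph{every} $\alpha$; the same happens at the mKdV-type point $\xi_1\approx N$, $\xi_2\approx -N$, $\xi_3\approx N$. These resonant configurations carry large weight when $k<0$ (the multiplier \eqref{tln-e7} there is of size $|\xi_1|\langle M\rangle^{-2k}$, and one can make all four modulations $O(1)$ simultaneously on a set of positive measure), so they cannot be discarded: in the corresponding dyadic blocks there is no lower bound on $|H|$, and your proposal to ``use the lower bound on $|H|$ furnished by the identity above to absorb both the derivative factor and the negative Sobolev weights'' collapses exactly where the estimate is most delicate. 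What $\alpha\neq 0,1$ actually buys is subtler: the sublevel sets $\{|H|\lesssim 1\}$ are thin (in the example above one needs $|\xi_2+\xi_3|\lesssim M^{-2}$ rather than the much larger set available when $\alpha=1$), and a correct proof must convert this into a measure/set-size gain rather than a pointwise one. Your appeal to bilinear transversality estimates could in principle supply such a mechanism, but since your case analysis asserts the resonant blocks do not exist, the plan as written has a hole at its central step.

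Beyond this, the proposal never carries out the bookkeeping that produces the specific region $s,k>-\tfrac12$, $|s-k|\leq \tfrac12$, nor the asymmetric endpoint claims; ``balancing the exponents'' is asserted, not done. For comparison, the paper takes a different route after dualization: using \eqref{xi-4} it majorizes the $[4;\R^2]$-multiplier by products of two $[3;\R^2]$-multipliers and invokes Tao's comparison, permutation and composition lemmas to reduce everything to the four bilinear $L^2$ estimates \eqref{bil-1}, \eqref{bil-2}, \eqref{bil-1x}, \eqref{bil-2x}. These are then proved by Cauchy--Schwarz and the calculus bounds of Lemma \ref{lemxm1}, where $\alpha\neq 0,1$ enters through lower bounds of the form $|H'(\xi_1)|\gtrsim \max\{\xi_1^2,\xi_2^2\}$ (Lemmas \ref{lemma1.5}--\ref{lemma1.7}) that justify the change of variables $x=H(\xi_1)$; note that the pairing in \eqref{J1}--\eqref{J3} always couples a KdV-wave with an $\alpha$-wave, which is how the dangerous two-$\alpha$-wave resonance is sidestepped. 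In that scheme the constraint $|s-k|\leq\tfrac12$ emerges concretely (the weight $\langle\xi_2\rangle^{2(s-k)+1}$ in \eqref{est1} is handled via Lemma \ref{lemma1.7}, which needs $k-s\geq-\tfrac12$), and the endpoint $s=-\tfrac12$ is not ``logarithmically critical'' at all --- it follows immediately from \eqref{bil-1.1} since $\tfrac12-2\epsilon<\tfrac12+\epsilon$. As it stands, your proposal is an outline whose key heuristic is incorrect, so it does not constitute a proof.
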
	
	Also, we establish some ill-posedness results. The first one is about the smoothness of the solution mapping  associated to the system \eqref{ivp-sy}.

	\begin{theorem}\label{mainTh-ill1.1}
		Let $\alpha \neq  0,1$. For any $s< -1/2$ or $k<-1/2$ or $|s-k|>2$ and for given  $(\phi, \psi)\in H^s(\R)\times H^k(\R)$, there exist no time $T =T(\|(\phi, \psi)\|_{H^s\times H^k})$ such that the solution mapping that takes initial data $(\phi, \psi)$ to the solution $(v, w) \in C([0,T];H^s) \times C([0,T];H^k)$ to the IVP \eqref{ivp-sy} is $C^3$ at the origin.
	\end{theorem}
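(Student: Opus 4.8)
The plan is to argue by contraposition: assuming the data-to-solution map is $C^3$ at the origin, I would extract two necessary cubic estimates on the first nonlinear Picard iterate, and then exhibit, in each of the three regions, families of data that violate them.

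First I would set up the reduction. Writing $U(t)=e^{-t\partial_x^3}$ and $U_\alpha(t)=e^{-\alpha t\partial_x^3}$ for the two Airy groups and expanding the solution of \eqref{ivp-sy} with data $(\lambda\phi,\mu\psi)$ as a power series, the cubic nonlinearities force
\[
v=\lambda\,U(t)\phi+\lambda\mu^2 v_3+\cdots,\qquad w=\mu\,U_\alpha(t)\psi+\lambda^2\mu\,w_3+\cdots,
\]
with
\[
v_3=-\int_0^t U(t-t')\,\partial_x\big((U\phi)(U_\alpha\psi)^2\big)(t')\,dt',\qquad w_3=-\int_0^t U_\alpha(t-t')\,\partial_x\big((U\phi)^2(U_\alpha\psi)\big)(t')\,dt'.
\]
If the solution map is $C^3$ at the origin, its third derivative is a bounded trilinear operator, and reading off the mixed $\lambda\mu^2$ and $\lambda^2\mu$ coefficients gives the two necessary estimates
\[
\sup_{t\in[0,T]}\|v_3(t)\|_{H^s}\lesssim\|\phi\|_{H^s}\|\psi\|_{H^k}^2,\qquad \sup_{t\in[0,T]}\|w_3(t)\|_{H^k}\lesssim\|\phi\|_{H^s}^2\|\psi\|_{H^k}.
\]
Thus it suffices to produce, in each bad region, a sequence of data violating the relevant inequality.

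Next I would pass to the Fourier side. With $\xi=\xi_1+\xi_2+\xi_3$ the $v$-iterate becomes
\[
\widehat{v_3}(\xi,t)=c\,\xi\,e^{it\xi^3}\!\!\int_{\xi=\xi_1+\xi_2+\xi_3}\!\!\hat\phi(\xi_1)\hat\psi(\xi_2)\hat\psi(\xi_3)\,m(t,\Omega)\,,\qquad \Omega=\xi^3-\xi_1^3-\alpha\xi_2^3-\alpha\xi_3^3,
\]
and similarly for $w_3$ with resonance $\Omega'=\alpha\xi^3-\xi_1^3-\xi_2^3-\alpha\xi_3^3$. The qualitative facts I would exploit are that the time integral produces the multiplier $m(t,\Omega)=(e^{-it\Omega}-1)/(-i\Omega)$, which is of size $t$ on the near-resonant set $\{|\Omega|\lesssim 1\}$ and decays like $1/|\Omega|$ off it, while the derivative contributes the factor $\xi$; hence interactions with high output frequency and controlled resonance are the dangerous ones. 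Then I would choose the data as sums of characteristic functions of short frequency intervals, tuned region by region. For $k<-1/2$ I would test the $v_3$-estimate with $\hat\phi$ a single bump at frequency $N$ and $\hat\psi$ two bumps at the opposite high frequencies $\pm N$: since $\xi_2+\xi_3\approx0$ forces $\Omega\approx0$ for every $\alpha$, the interaction is resonant, the output concentrates near frequency $N$, and counting the three norms (with interval widths $\sim N^{-2}$ imposed by resonance coherence) yields a ratio growing like $N^{-1-2k}\to\infty$ precisely when $k<-1/2$. By the mirror construction (two $\phi$-bumps at $\pm N$ and one $\psi$-bump) the $w_3$-estimate fails when $s<-1/2$. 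For $|s-k|>2$ I would instead use a scale-separated configuration, placing the field that appears twice at a high frequency $M$ and the single field at low frequency so that the output frequency is high while one of the two norms is evaluated cheaply; the derivative gain $|\xi|\sim M$ together with the weight disparity $\langle M\rangle^{s-k}$ then drives the ratio to infinity, using $v_3$ when $s-k>2$ and $w_3$ when $k-s>2$.

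The main obstacle will be the $|s-k|>2$ regime, and in particular attaining the exact threshold rather than a more restrictive one. Here no opposite-frequency cancellation is available to make the interaction resonant, so the multiplier only contributes its off-resonant size $1/|\Omega|$ with $|\Omega|\sim M^3$; a naive scale-separated count loses this factor and yields only a condition such as $s-2k>2$. To reach all of $|s-k|>2$ the interaction must be optimized — balancing the relative frequency scales of the two fields, the interval widths, and the derivative and weight gains against the resonance loss — and I expect this to require splitting into cases according to the sign of $\alpha$, since for $\alpha<0$ and for large $\alpha$ the near-resonant rays that are available for intermediate $\alpha$ disappear. Checking that the chosen multiplier stays coherent over the support, so that no cancellation is lost in the $\xi_2,\xi_3$ integration, and that the divergence is uniform in $t\in[0,T]$, is the delicate technical point.
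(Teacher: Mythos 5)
Your reduction to necessary bounds on the first Picard iterates $v_3,w_3$ (via the boundedness of the third derivative and the mixed $\lambda\mu^2$, $\lambda^2\mu$ coefficients) is exactly the paper's starting point, and your constructions in the regions $k<-1/2$ and $s<-1/2$ are correct: with bumps of width $\sim N^{-2}$ at $N$ for $\hat\phi$ and at $\pm N$ for $\hat\psi$, the resonance $\Omega=\xi^3-\xi_1^3-\alpha(\xi_2^3+\xi_3^3)$ is $O(1)$ on the support for every $\alpha$, and your count $t\,N^{-1-2k}\to\infty$ matches the paper's. (The paper instead puts the three bumps at $aN,bN,cN$ with $a+b+c=1$, $a^3+\alpha b^3+\alpha c^3=1$ to kill the resonance; your $\pm N$ cancellation is a legitimate, arguably cleaner, substitute, and like the paper's construction it works at each fixed time, which is the stronger statement in Proposition \ref{ill-pos}(b).)

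The genuine gap is the region $|s-k|>2$, and it is a gap of mechanism, not of bookkeeping. You search for a near-resonant configuration (``near-resonant rays'', optimization of scales, case analysis in the sign of $\alpha$), but none is needed, and it is unclear any exists giving the exact threshold $2$: in the low--low--high interaction with output $\sim M$ one always has $|\Omega|\sim|1-\alpha|M^3$, for every $\alpha\neq1$. The paper's resolution exploits a freedom that you wrote down but never use: your necessary estimate carries $\sup_{t\in[0,T]}$, so you may test it along a sequence of times $t_M\to0$. Take $\hat\phi$ and one factor of $\hat\psi$ supported on $O(1)$-intervals near the origin, the other $\hat\psi$ near $M$, and $t_M\sim M^{-3}$. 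Then $|t'\Omega|\lesssim1$ for all $t'\in[0,t_M]$ (the paper arranges $\cos(t'Q_\alpha)>1/2$), so the time multiplier is $\approx t_M$ with full coherence and no shrinking of intervals; by Lemma \ref{lemaelem} the estimate forces $t_M\,M^{s-k+1}\lesssim1$, i.e.\ $M^{s-k-2}\lesssim1$, which fails precisely when $s-k>2$, and the mirror configuration tested on $w_3$ gives $k-s>2$. This is uniform in $\alpha\neq0,1$, requires no case analysis, and also explains why the paper separates the two regimes: since $t_M\to0$, this argument disproves differentiability only of the solution map into $C([0,T];H^s)\times C([0,T];H^k)$, not of the fixed-time flow map, whereas your resonant constructions for $s<-1/2$, $k<-1/2$ do work at fixed $t$.
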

	\begin{remark} We point out that in Section \ref{ill} we prove a little bit more stronger result than  the Theorem \ref{mainTh-ill1.1}. 
		
	\end{remark}
	%
	%
In the next figure we represent the regions where we have L.W.P. and $C^3$-ill-posedness:
%

%
%
%
%
\begin{figure}[!htb]
	\centering
	\includegraphics[scale=0.07]{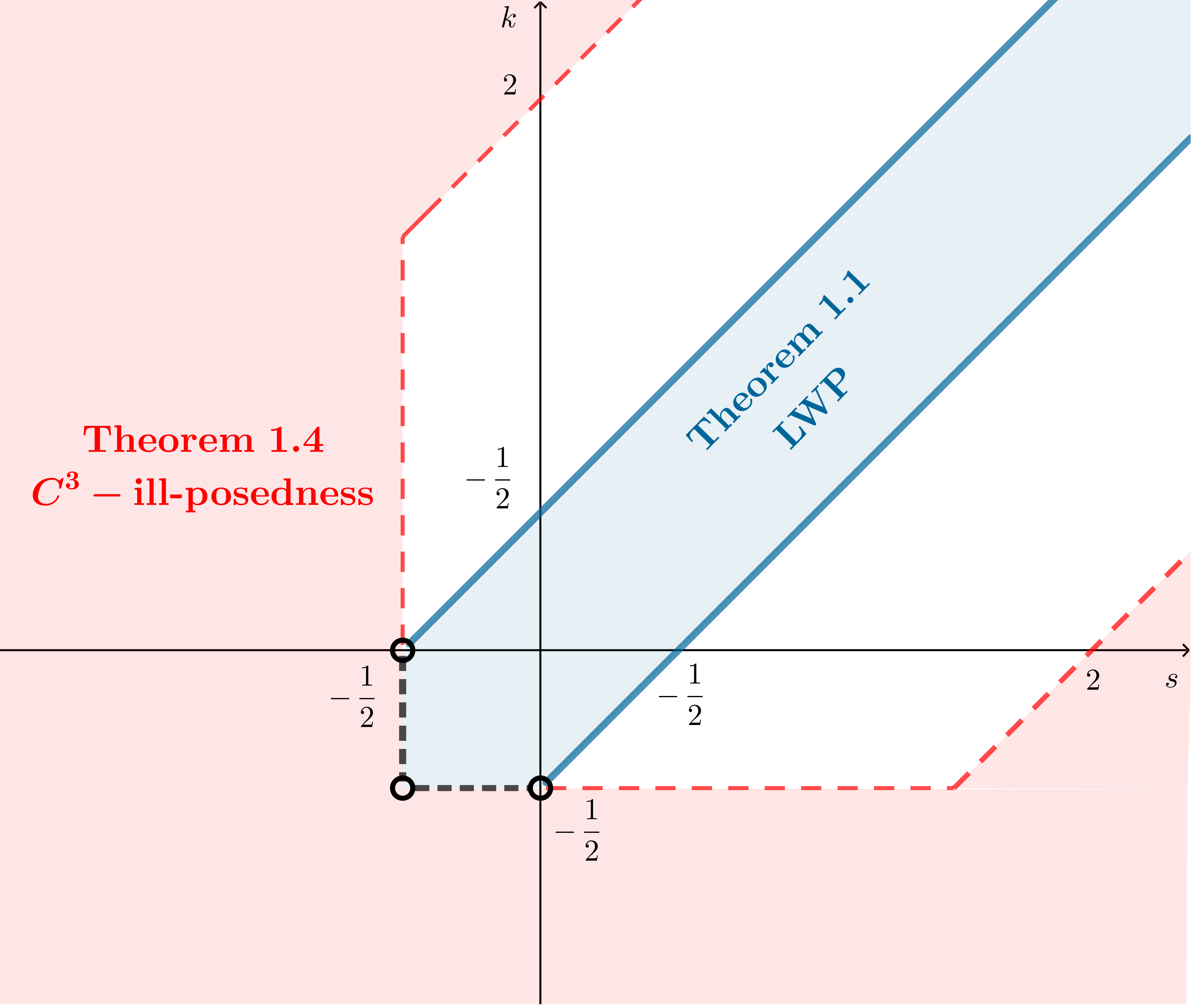}
\end{figure}
%
%
%
%
%


	The second one is about the failure of the trilinear estimates \eqref{tlint-m1} and \eqref{tlint-m2}. We prove the following results:

	\begin{proposition}\label{prop1ill}
		Let  $\alpha \neq 0,1$. \\
		(a) The trilinear estimate \eqref{tlint-m1}  fail to hold for any $b \in \R$ whenever $s-2k>1$  or $k<-1/2$.\\
		(b) The trilinear estimate  \eqref{tlint-m2} fail to hold for any $b \in \R$ whenever $k-2s>1$  or $s<-1/2$.
	\end{proposition}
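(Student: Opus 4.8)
The plan is to disprove the estimates by producing explicit families of frequency–concentrated test functions for which the left-hand side is forced to dwarf the right-hand side. After a space–time Fourier transform, $\|(vw_1w_2)_x\|_{X_{s,b'}}$ and the three factors on the right of \eqref{tlint-m1} are governed by the distance of each factor's support to its own characteristic variety ($\tau=\xi^3$ for the $v$-type space, $\tau=\alpha\xi^3$ for the $w$-type space). I would therefore take $\widehat v,\widehat{w_1},\widehat{w_2}$ to be indicator functions of unit boxes sitting essentially \emph{on} these varieties, with spatial frequencies $\xi_1,\xi_2,\xi_3$ to be chosen, and track how the resonance function
\[
R=\xi^3-\xi_1^3-\alpha\xi_2^3-\alpha\xi_3^3,\qquad \xi=\xi_1+\xi_2+\xi_3,
\]
controls the output modulation $\langle\tau-\xi^3\rangle$. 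The decisive point, which makes the failure hold for \emph{every} $b$ (and $b'$), is to arrange the supports so that $R=O(1)$: then all modulation weights are $\sim 1$ and drop out, leaving a purely frequency-weighted inequality whose failure is read off from powers of a large parameter $N$.

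For part (a) there are two regimes matching the two hypotheses. When $k<-1/2$ I would use the \emph{cancelling} configuration $\xi_1\sim N$, $\xi_2\sim N$, $\xi_3\sim -N$, so that $\xi_2^3+\xi_3^3$ cancels and $R\equiv 0$ for every $\alpha$, with output frequency $\xi\sim N$. When $s-2k>1$ I would instead use a genuinely resonant configuration with $v$ at low frequency $\xi_1=O(1)$ while $w_1,w_2$ sit at high frequencies $\xi_2\sim N$, $\xi_3\sim rN$ whose ratio $r$ solves $(1+r)^3=\alpha(1+r^3)$, i.e. a nontrivial real root of $(1-\alpha)r^2+(2+\alpha)r+(1-\alpha)=0$; this again forces $R=O(1)$ but places the output at the high frequency $\xi\sim(1+r)N$. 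This is exactly where $\alpha\neq1$ is used: for $\alpha=1$ the equation degenerates and no such nontrivial resonant triple is available.

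With the supports fixed, both sides become powers of $N$. Since all modulations are bounded, the right-hand side is $\|v\|_{X_{s,b}}\|w_1\|_{X^{\alpha}_{k,b}}\|w_2\|_{X^{\alpha}_{k,b}}$ read off from the weights $\langle\xi_i\rangle^{s},\langle\xi_i\rangle^{k}$, while the left-hand side is $\langle\xi\rangle^{s}|\xi|$ times the $L^2$-norm of $\widehat v*\widehat{w_1}*\widehat{w_2}$ restricted to the region where the output modulation is $O(1)$. In the cancelling regime this gives $\mathrm{LHS}\sim N^{s-1}$ against $\mathrm{RHS}\sim N^{s+2k}$, so the ratio $N^{-1-2k}$ diverges exactly when $k<-1/2$; in the resonant regime it gives $\mathrm{LHS}\sim N^{s-1}$ against $\mathrm{RHS}\sim N^{2k}$, and $N^{s-1-2k}$ diverges exactly when $s-2k>1$. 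Part (b) follows verbatim after interchanging the roles of the two equations (hence of $s,k$ and of the dispersions $\xi^3,\alpha\xi^3$): one cancels the two $v$-factors for the $s<-1/2$ threshold and solves the corresponding resonance cubic for the $k-2s>1$ threshold.

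The main obstacle is the precise measure count for the triple convolution on the bounded-modulation region. The naive frequency-weight heuristic would give a ratio $N^{s+1-2k}$, predicting the wrong threshold; the correct one appears only after noting that the second-order part of $R$ carries a coefficient $\sim(\alpha-1)N^2$, nonzero precisely because $\alpha\neq1$, so demanding $|R|=O(1)$ pins one internal frequency variable to an interval of length $\sim N^{-2}$. This Jacobian costs exactly a factor $N^{-2}$ in the size of the convolution, converting $N^{s+1-2k}$ into $N^{s-1-2k}$. Computing this Jacobian and the associated support measures exactly, and verifying that the bounded-modulation region genuinely carries a positive fraction of the mass so that the lower bound on the left-hand side is legitimate, is the delicate step; the rest is bookkeeping with powers of $N$.
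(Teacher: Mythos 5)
Your proposal is correct and is essentially the paper's own proof: the paper likewise takes characteristic functions of boxes with frequency width $N^{-2}$ and modulation width $O(1)$ centred on the two characteristic curves, so that the resonance (and hence every modulation) stays $O(1)$ and all powers of $b$, $b'$ drop out, then lower-bounds the triple convolution on an output box (its Lemma \ref{lemaelem}) and reads the thresholds off powers of $N$, with exactly the $N^{-2}$ measure factor you identify as the delicate point. Even the configurations agree: your quadratic $(1-\alpha)r^2+(2+\alpha)r+(1-\alpha)=0$ is the paper's condition \eqref{cond-const} after writing $c_2=1/(1+r)$, $c_3=r/(1+r)$, and your cancelling pair for $k<-1/2$ is the particular solution $(c_1,c_2,c_3)=(1,1,-1)$ of the paper's system \eqref{cond-constx}; the one caveat --- shared with the paper, hence not a gap you introduced --- is that real resonant constants for \eqref{cond-const} (equivalently real roots of your quadratic, whose discriminant is $3\alpha(4-\alpha)$) exist only when $0<\alpha\le 4$, so neither your argument nor the paper's, as written, covers $\alpha<0$ or $\alpha>4$ in the $s-2k>1$ sub-case of (a) (and similarly for the $k-2s>1$ sub-case of (b)).
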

	and 
	\begin{proposition}\label{prop1illx1}
		Let  $\alpha \neq 0,1$. \\
		(a) The trilinear estimate \eqref{tlint-m1}  fail to hold  whenever $s-k>2$, for any $\epsilon$ such that $0<\epsilon <\frac23(s-k-2)$.\\
		(b) The trilinear estimate \eqref{tlint-m2}  fail to hold  whenever $k-s>2$, for any $\epsilon$ such that $0<\epsilon <\frac23(s-k-2)$.
	\end{proposition}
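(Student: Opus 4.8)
\emph{The plan} is to disprove \eqref{tlint-m1} and \eqref{tlint-m2} by exhibiting explicit extremizing families: each input is taken to be a constant multiple of the characteristic function, in frequency--modulation space, of a box adapted to the relevant dispersion relation, and then both sides are computed by hand. I will carry this out for \eqref{tlint-m1}; part (b) follows by the symmetry $(v,s,\partial_x^3)\leftrightarrow(w,k,\alpha\partial_x^3)$ interchanging the two equations (with the range for $\epsilon$ read as $0<\epsilon<\tfrac23(k-s-2)$, the displayed $\tfrac23(s-k-2)$ being a typo). The governing quantity is the resonance function
\begin{equation*}
	H(\xi_1,\xi_2,\xi_3)=(\xi_1+\xi_2+\xi_3)^3-\xi_1^3-\alpha\xi_2^3-\alpha\xi_3^3=(1-\alpha)(\xi_2^3+\xi_3^3)+3(\xi_1+\xi_2)(\xi_2+\xi_3)(\xi_3+\xi_1),
\end{equation*}
which controls the output modulation $\langle\tau-\xi^3\rangle$ in terms of the three input modulations. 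The decisive structural point, and the place where $\alpha\neq1$ enters, is the following: if exactly one $w$-input sits at high frequency $\xi_2\sim N$ while $v$ and the other input sit at frequency $O(1)$, then the leading term $(1-\alpha)\xi_2^3\sim(1-\alpha)N^3$ does not cancel, so $|H|\sim N^3$ is \emph{cubic}, in sharp contrast with the equal-speed case $\alpha=1$, where the same configuration is only quadratically nonresonant. It is precisely this forced cubic modulation that the estimate cannot absorb once $s-k$ is large.

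Concretely, for part (a) I would place the output at frequency $\xi\sim N$, so that $\langle\xi\rangle^{s}$ together with the derivative contribute the large factor $N^{1+s}$; put $w_1$ at frequency $\xi_2\sim N$, which costs only $N^{k}$ (cheap relative to $N^{s}$ when $s-k$ is large); and keep $v$ and $w_2$ at frequency $O(1)$. Taking the boxes of unit size in the modulation variables and of a suitable common width in the frequency variables, one evaluates the triple convolution $\hat v*\hat w_1*\hat w_2$ by computing the volume of the admissible set of $(\xi_1,\tau_1,\xi_2,\tau_2)$, which is elementary once the boxes are fixed. Feeding this, together with the localization of $\langle\tau-\xi^3\rangle$ dictated by $H$, into the $X_{s,b'}$ norm, I expect a lower bound of the form
\begin{equation*}
	\frac{\|(vw_1w_2)_x\|_{X_{s,b'}}}{\|v\|_{X_{s,b}}\|w_1\|_{X^{\alpha}_{k,b}}\|w_2\|_{X^{\alpha}_{k,b}}}\gtrsim N^{\,(s-k)-2-\frac32\epsilon}.
\end{equation*}
Since $b=\tfrac12+\epsilon$ and $b'=-\tfrac12+2\epsilon$, this exponent is positive exactly when $\epsilon<\tfrac23(s-k-2)$ (which in particular forces $s-k>2$), and then letting $N\to\infty$ contradicts \eqref{tlint-m1}.

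\emph{The hard part} is not the bookkeeping but the choice of geometry that realizes the exponent above. Because $b'<0$ penalizes the output modulation while $b>0$ penalizes the input modulations, where one deposits the unavoidable $N^3$ of resonance — directly on the output, or on one input after a compensating thinning of the low-frequency boxes — changes both the constant $-2$ and, crucially, the coefficient of $\epsilon$. Arranging the $\epsilon$-dependence to have the correct (negative) sign, so that the counterexample degrades exactly as $\epsilon$ grows and the threshold is the stated $\epsilon<\tfrac23(s-k-2)$, is what pins down the precise box dimensions; confirming that those boxes genuinely overlap in the convolution with the asserted volume is the main technical check. Part (b) is then identical after exchanging the roles of the two equations.
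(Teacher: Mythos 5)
Your overall strategy is the paper's strategy: test \eqref{tlint-m1} on characteristic functions of boxes in $(\xi,\tau)$-space, with the output and exactly one $\alpha$-dispersion input at frequency $\sim N$ and the other two inputs at frequency $O(1)$, so that the resonance $\xi^3-\xi_1^3-\alpha\xi_2^3-\alpha\xi_3^3=(1-\alpha)N^3+O(N^2)$ cannot cancel when $\alpha\neq1$; the convolution-volume bound you invoke is exactly Lemma \ref{lemaelem}, and your reading of the range in part (b) as $0<\epsilon<\frac23(k-s-2)$ is the right correction of the typo. The gap is that the entire quantitative content of the proposition lives in the step you leave open, and the one quantitative claim you do make is not obtainable from the construction you describe. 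Since $(\tau-\xi^3)-\sigma_1-\sigma_2-\sigma_3$ equals minus the resonance, the unavoidable $N^3$ must be carried, essentially in full, by one of the four modulation factors (splitting it between two factors only stacks both penalties). If it is carried by an input, whose weight is $\langle\sigma_j\rangle^{-b}$ with $b=\frac12+\epsilon$, the ratio of the two sides of \eqref{tlint-m1} picks up $N^{-3b}=N^{-\frac32-3\epsilon}$, so the $\epsilon$-coefficient is $-3$, not $-\frac32$: this is precisely the paper's choice, $A_N=\{|\xi_1|<1,\ |\sigma_1|\sim N^3\}$ with unit boxes $B_N$, $C_N$, $R_N$, which gives the ratio $N^{(s-k)-2-3\epsilon}$ and hence failure only for $\epsilon<\frac13(s-k-2)$. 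If instead the $N^3$ is carried by the output, whose weight is $\langle\tau-\xi^3\rangle^{b'}$ with $b'=-\frac12+2\epsilon$, the factor is $N^{3b'}=N^{-\frac32+6\epsilon}$, so the $\epsilon$-dependence comes out with the wrong (positive) sign, and the constant term worsens because the frequency boxes must then be thinned to width $\sim N^{-2}$ to keep the input modulations bounded. No admissible placement produces your ``expected'' ratio $N^{(s-k)-2-\frac32\epsilon}$; that exponent appears reverse-engineered from the statement rather than derived, and the step you yourself call the hard part --- choosing boxes that realize it --- would fail.

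Two further remarks. First, the paper's own proof does not deliver the stated threshold either: its displayed conclusion, $N^{1+s-k-3b}\lesssim N^{3/2}$, is contradicted for large $N$ exactly when $s-k>\frac12+3b=2+3\epsilon$, i.e.\ for $\epsilon<\frac13(s-k-2)$, which is strictly smaller than the announced range $\epsilon<\frac23(s-k-2)$; so the constant you were chasing is not produced by this machinery at all. Second, the honest completion of your plan is the paper's computation: with the sets above, \eqref{trill-f4} and Lemma \ref{lemaelem} give $\frac{N^{1+s}}{N^{3b}N^{k}}\,|B_N|\,|C_N|\,|R_N|^{1/2}\lesssim |A_N|^{1/2}|B_N|^{1/2}|C_N|^{1/2}$, from which the failure condition above follows; writing this out, rather than ``expecting'' a bound, is the proof. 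As it stands, your proposal is a program whose central exponent is both unproved and, in the form stated, unattainable by a single family of boxes.
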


	Also, at the endpoint we have
	
	\begin{proposition}\label{trilinendpoint}
		Let $\alpha \neq 0,1$, then the estimate
		\begin{equation}
			\|(vw_1 w_1)_x\|_{X_{-\frac{1}2,-\frac12 +4\epsilon}}\lesssim \|v\|_{X_{-\frac{1}2,\frac12 +\epsilon}}\|w_1\|_{X^ \alpha_{-\frac{1}2,\frac12 +\epsilon}}\|w_2\|_{X^ \alpha_{-\frac{1}2,\frac12 +\epsilon}} \label{x2.3 1}
		\end{equation}
		and
		\begin{equation}
			\|(v_1 v_2 w)_x\|_{X^\alpha_{-\frac{1}2,-\frac12 +4\epsilon}}\lesssim \|v_1\|_{X_{-\frac{1}2,\frac12 +\epsilon}}\|v_2\|_{X_{-\frac{1}2,\frac12 +\epsilon}}\|w\|_{X^ \alpha_{-\frac{1}2,\frac12 +\epsilon}} \label{x2.3 2}
		\end{equation}
		fails to hold whenever $\epsilon>0$.
	\end{proposition}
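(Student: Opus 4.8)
I will disprove both \eqref{x2.3 1} and \eqref{x2.3 2} by constructing explicit families of test functions whose Fourier transforms are indicators of thin boxes adapted to a resonant interaction, and for which the ratio (left side)/(right side) diverges as a frequency parameter $N\to\infty$. I treat \eqref{x2.3 1} in detail; \eqref{x2.3 2} follows from the mirror construction, interchanging the roles of the Airy and $\alpha$-Airy dispersions. Writing $\xi=\xi_1+\xi_2+\xi_3$, the resonance function governing \eqref{x2.3 1} is $R=\xi^3-\xi_1^3-\alpha\xi_2^3-\alpha\xi_3^3$, with $\xi_1$ the $v$-frequency and $\xi_2,\xi_3$ the $w_1,w_2$-frequencies. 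The key observation is that $R$ vanishes identically on the locus $\xi_3=-\xi_2$ (for every $\alpha$), since there $\xi=\xi_1$ and $\alpha\xi_2^3+\alpha\xi_3^3=0$. This exact resonance is what I exploit: it allows the three input modulations and the output modulation to be kept comparably small, so that the Bourgain weights are harmless on the inputs and near-optimal on the output.

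Concretely, for large $N$ and parameters $L,a>0$ to be fixed, let $\widehat v,\widehat{w_1},\widehat{w_2}$ be the indicators of the boxes
\[
\{\xi_1\in[N,N+a],\ |\tau_1-\xi_1^3|\le L\},\quad\{\xi_2\in[N,N+a],\ |\tau_2-\alpha\xi_2^3|\le L\},\quad\{\xi_3\in[-N-a,-N],\ |\tau_3-\alpha\xi_3^3|\le L\}.
\]
Expanding about the resonant locus gives $R=3(\xi_2+\xi_3)(\xi_1^2-\alpha N^2)+(\text{lower order})$, and here the hypothesis $\alpha\neq1$ is decisive: since $\xi_1\approx N$ one has $\xi_1^2-\alpha N^2\approx(1-\alpha)N^2\neq0$, whence $|R|\lesssim N^2a$ (implicit constant depending on $\alpha$) throughout the boxes. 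I therefore take $a\sim L/N^2$, forcing $|R|\lesssim L$, and finally set $L=N$, so $a\sim N^{-1}$. With this calibration the output modulation $\tau-\xi^3=-R+(\tau_1-\xi_1^3)+(\tau_2-\alpha\xi_2^3)+(\tau_3-\alpha\xi_3^3)$ stays $\lesssim L$, so $\widehat{vw_1w_2}$ concentrates where $\langle\tau-\xi^3\rangle\lesssim L$.

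The computation then splits into three pieces. On the right-hand side each factor has $\langle\xi\rangle^{-1}\sim N^{-1}$ on its box and modulation weight $\sim L^{2b}$, giving $\|v\|_{X_{-1/2,b}}\sim\|w_i\|_{X^{\alpha}_{-1/2,b}}\sim N^{-1/2}a^{1/2}L^{b+1/2}$, so the right side is $\sim N^{-3/2}a^{3/2}L^{3b+3/2}$. For the left-hand side I estimate the triple convolution $F:=\widehat v*\widehat{w_1}*\widehat{w_2}$: a direct count of the defining integral shows that $F$ is supported on a set of measure $\sim aL$ in $(\xi,\tau)$, satisfies $\|F\|_{L^\infty}\sim a^2L^2$, and ---crucially--- obeys $F\gtrsim a^2L^2$ throughout the low-modulation slab $\{|\tau-\xi^3|\lesssim L\}$. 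Since $|\xi|\sim N$ on the support, and since $2b'=-1+8\epsilon>-1$ makes $\int_1^L\langle\mu\rangle^{2b'}\,d\mu\sim L^{1+2b'}$, I obtain
\[
\|(vw_1w_2)_x\|_{X_{-1/2,b'}}^2\ \gtrsim\ N\!\int\langle\mu\rangle^{2b'}|F|^2\,d\mu\,d\xi\ \gtrsim\ N\,a\,(a^2L^2)^2\,L^{1+2b'}\ \sim\ N\,a^5\,L^{5+2b'}.
\]
Dividing, the quotient is $\gtrsim N^2a\,L^{(5/2+b')-(3b+3/2)}=N^2a\,L^{-1+\epsilon}$, which with $a\sim L/N^2$ and $L=N$ equals $\sim L^{\epsilon}=N^{\epsilon}\to\infty$. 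Hence no finite constant can validate \eqref{x2.3 1}, and the mirror construction (with $v_1,v_2$ at frequencies $\pm N$ and the output on the $\alpha$-Airy curve) disposes of \eqref{x2.3 2}.

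I expect the main obstacle to lie in the middle step: one must check that $R$ remains $\lesssim L$ \emph{uniformly} over the two-dimensional frequency support---not merely at the box centers---so that $F$ genuinely concentrates at low output modulation, and then track the exponents through the negative-order weight $\langle\mu\rangle^{2b'}$ so that exactly the borderline power $L^{1+2b'}$, and hence the net factor $L^{\epsilon}$, survives. The fact that $\alpha\neq1$ enters precisely through the coefficient $\xi_1^2-\alpha N^2$ also explains why this endpoint argument is unavailable for the single mKdV equation ($\alpha=1$), where that coefficient degenerates.
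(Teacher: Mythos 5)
Your construction is correct, and at its core it is the same mechanism as the paper's proof of Proposition \ref{trilinendpoint}: indicators of thin boxes along the characteristic curves, with frequency width comparable to (modulation height)$/N^{2}$ and height a positive power of $N$, so that all four modulation weights are of a single size $L$ and the mismatch between the input exponents $\frac12+\epsilon$ and the output exponent $-\frac12+4\epsilon$ leaves a divergent factor $L^{\epsilon}$. Indeed, the paper's rectangles have dimensions $N^{-(2+r)}\times N^{-r}$ with $-2<r<0$, so your calibration $a\sim L/N^{2}$, $L=N$ is exactly their case $r=-1$, and both computations end with the same quotient $N^{|r|\epsilon}$. The differences are in execution, and each buys something. (i) Resonant configuration: the paper puts the three inputs at $c_{1}N,c_{2}N,c_{3}N$ with $c_{1}+c_{2}+c_{3}=1$ and $\alpha c_{1}^{3}+\alpha c_{2}^{3}+c_{3}^{3}=1$, so the output lands on the Airy curve by fiat but one must solve for $\alpha$-dependent constants; you use $(N,N,-N)$ and the factorization $\xi_{2}^{3}+\xi_{3}^{3}=(\xi_{2}+\xi_{3})(\xi_{2}^{2}-\xi_{2}\xi_{3}+\xi_{3}^{2})$, which needs no tuning. (ii) You plug test functions directly into \eqref{x2.3 1} instead of first dualizing, which avoids the bookkeeping in the paper's reduction. (iii) For the size of the triple convolution, the paper cites Lemma \ref{conv-lema2}, whereas you assert the pointwise bound $F\gtrsim a^{2}L^{2}$ on the low-modulation region. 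That assertion is the one step you leave unproved, and it does need an argument: each of your supports is a \emph{curved} box, but since every curve varies by $\lesssim N^{2}a\sim\delta L$ over its frequency interval (take the implicit constant in $a\sim L/N^{2}$ small), each curved box contains an axis-parallel rectangle of dimensions $a\times cL$, and the convolution of axis-parallel rectangles tensorizes into one-dimensional convolutions, giving $F\gtrsim a^{2}L^{2}$ on a product set of measure $\sim aL$ whose output modulation is $\lesssim L$. Also note your phrase ``throughout the low-modulation slab'' should be read as ``on a subset of the slab of measure $\gtrsim aL$''; $F$ cannot be bounded below on the whole slab, but that weaker statement is all your integral lower bound uses.

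One genuine misstatement, though it does not damage the proof: you claim that $\alpha\neq1$ is ``decisive'' because $\xi_{1}^{2}-\alpha N^{2}\approx(1-\alpha)N^{2}\neq0$, ``whence $|R|\lesssim N^{2}a$.'' This is backwards: an \emph{upper} bound on $|R|$ does not require the leading coefficient to be nonzero, and $|R|\lesssim_{\alpha}N^{2}a$ holds for every $\alpha$; when $\alpha=1$ the resonance is in fact even smaller. Your construction is precisely the classical mKdV counterexample, and it shows failure of \eqref{x2.3 1} at $s=-\frac12$ for $\alpha=1$ as well (consistent with the known $s\geq\frac14$ threshold for the mKdV trilinear estimate); the restriction $\alpha\neq0,1$ in the proposition is the paper's standing hypothesis, not something your argument uses. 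So the closing remark that the endpoint argument is ``unavailable'' for $\alpha=1$ should be deleted, but the proof of Proposition \ref{trilinendpoint} as you give it, including the mirror construction for \eqref{x2.3 2}, is sound.
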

	
	The plan of this paper is as follows. In Section 2 we fix some notations, define the spaces when we perform the
	iteration process, recall some useful inequalities. In Section 3 we prove the crucial result: Proposition \ref{prop1}. In Section 4 we prove the Theorem \ref{loc-sys}. 
	In Section 5 we prove the ill-posedness results and finally, in the Section 6 we give some negative result related to the approach adopted in Section 3.

\secao{Function spaces and preliminary estimates}

In this section we fix some notations, define the function spaces and remember some preliminary results. First, we introduce the integral equations associated to the system \eqref{ivp-sy},
\begin{align}
	v(t)&=U(t)\phi-\int_0^tU(t-t')\partial_x(vw^2)(t')dt'\label{eqint1},\\
	w(t)&=U^{\alpha}(t)\psi -\int_0^tU^{\alpha}(t-t')\partial_x(v^2w)(t')dt'\label{eqint2},
\end{align}
where $U^{\alpha}(t):=e^{-t\alpha\partial_x^3}$ is the unitary group associated to the linear problem $\partial_t u+\alpha\partial_x^3u=0$ and defined via Fourier transform by $U^{\alpha}(t)\phi=\{ e^{it\alpha(\cdot)^3}\hat{\phi}(\cdot) \}\check{\,} $. Here $U(t)$ denotes $U^1(t)$. In order to use the Fourier restriction norm method and prove the local result,  we introduce the Bourgain space  $X^{\alpha}_{s,b}$, for $s,b\in \R$, to be the completion of the Schwartz class $\mathcal{S}(\R^2)$ under the norm
\begin{equation}\label{bnorm}
	\|f\|_{X^{\alpha}_{s,b}}:=\|U^{\alpha}(t)f\|_{H^{b}_t(\R;H^s_x)}=\|\langle \xi\rangle^s\langle \tau-\alpha\xi^3\rangle^{b}
	\tilde{f}(\tau,\xi) \|_{L^2_{\tau,\xi}},
\end{equation}
where $\langle\cdot \rangle:=1+|\cdot|$ and $\tilde{f}$ is the Fourier transform in $(t,x)$ variable
$$\tilde{f}(\tau,\xi):=c\int_{\R^2}e^{-i(x\xi+t\tau)}f(t,x)dtdx.$$
Hereafter,  for $\alpha=1$ we will use $X_{s,b}$ instead of $X^{1}_{s,b}$.
If $b>1/2$, we have that $X_{s,b}^{\alpha}\hookrightarrow C(\R: H^{s}_x(\R))$ and thus for an interval $I=[-\delta,\,\delta]$, we can define the restricted bourgain spaces $X^{\alpha,\delta}_{s,b}$ endowed with the norm $$\|f\|_{X^{\alpha,\delta}_{s,b}}=\inf\{\|g\|_{X^{\alpha}_{s,b}};\, g|_{[-\delta,\,\delta]}=f   \}.$$
Of course, we write $X^{\delta}_{s,b}$ instead of $X^{1,\delta}_{s,b}$.

Now we remember some linear estimates, important parts when we use Fourier Restriction norm method. Let $\eta$ a smooth function supported on the interval $[-2,\,2]$ such that $\eta(t)=1$ for all $t\in [-1,\,1]$. We denote, for each $\delta >0$, $\eta_{\delta}(t)=\eta(t/\delta)$. The following estimates holds (see e.g. \protect\cite{KPV1996} or \protect\cite{GTV1997})
\begin{lemma}\label{lemalin} Let $\delta>0$, $s\in\R$ and $-1/2<b'\leq 0\leq b\leq b' +1$. Then we have
	\begin{enumerate}
		\item[(i)] \ \ \ $\displaystyle\|\eta(t) U^{\alpha}(t)\phi\|_{X_{s,b}^{\alpha}}\lesssim \|\phi\|_{H^s}$.
		\ \\
		
		\item[(ii)] \ \ \ $\left\| \eta_{\delta}(t)\displaystyle\int_0^tU^{\alpha}(t-t')f(t')dt'  \right\|_{X^{\alpha}_{s,b}}\lesssim \delta^{1-b+b'}\|f\|_{X^{\alpha}_{s,b'}}$.
	\end{enumerate}
	
\end{lemma}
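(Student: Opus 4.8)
\emph{Proof proposal.} Both estimates rest on the explicit Fourier description of the norm in \eqref{bnorm}, the idea being to transfer everything onto the single modulation variable $\tau-\alpha\xi^3$. For (i), I would compute the space--time Fourier transform of $g:=\eta(t)U^{\alpha}(t)\phi$ directly. Since $\widehat{U^{\alpha}(t)\phi}(\xi)=e^{it\alpha\xi^3}\hat{\phi}(\xi)$ in the spatial variable, taking the temporal transform yields the factorization $\tilde g(\tau,\xi)=\hat{\phi}(\xi)\,\widehat{\eta}(\tau-\alpha\xi^3)$, where $\widehat{\eta}$ denotes the temporal Fourier transform of $\eta$. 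Inserting this into \eqref{bnorm} and performing, for each fixed $\xi$, the change of variables $\mu=\tau-\alpha\xi^3$ decouples the integral completely, giving $\|g\|_{X^{\alpha}_{s,b}}=\|\phi\|_{H^s}\,\|\eta\|_{H^b_t}$. As $\eta\in\mathcal S(\R)$, we have $\|\eta\|_{H^b_t}<\infty$ for every $b$, which is (i).

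For (ii) the first step is a reduction. I would conjugate by the group: from the Fourier characterization one checks $\|h\|_{X^{\alpha}_{s,b}}=\|U^{\alpha}(-t)h\|_{H^b_tH^s_x}$, and since $U^{\alpha}(-t)\int_0^tU^{\alpha}(t-t')f(t')\,dt'=\int_0^tU^{\alpha}(-t')f(t')\,dt'$, setting $F(t'):=U^{\alpha}(-t')f(t')$ turns the Duhamel operator into the plain primitive $\int_0^t$. Because the frequency weight $\langle\xi\rangle^s$ commutes with both $\int_0^t$ and multiplication by $\eta_\delta$, it can be absorbed into $F$; by Minkowski's inequality in $x$ the whole matter reduces to the scalar-in-time estimate
\begin{equation*}
	\Bigl\|\eta_\delta(t)\int_0^t g(t')\,dt'\Bigr\|_{H^b_t}\lesssim \delta^{1-b+b'}\,\|g\|_{H^{b'}_t},\qquad -\tfrac12<b'\le 0\le b\le b'+1.
\end{equation*}

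To prove this, I would write $\int_0^t g(t')\,dt'=\int \widehat g(\lambda)\,\frac{e^{it\lambda}-1}{i\lambda}\,d\lambda$ and split the $\lambda$-integral into the low regime $|\lambda|\le 1/\delta$ and the high regime $|\lambda|>1/\delta$. In the low regime I would Taylor-expand $\frac{e^{it\lambda}-1}{i\lambda}=\sum_{n\ge1}\frac{i^{n-1}}{n!}t^n\lambda^{n-1}$, so that the contribution becomes $\sum_{n\ge1}\frac{i^{n-1}}{n!}\,\bigl(\eta_\delta(t)t^n\bigr)\int_{|\lambda|\le1/\delta}\widehat g(\lambda)\lambda^{n-1}\,d\lambda$. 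Using the scaling $\|\eta_\delta\,t^n\|_{H^b_t}\lesssim \delta^{\,n+\frac12-b}\|t^n\eta\|_{H^b_t}$ together with a Cauchy--Schwarz bound $\bigl|\int_{|\lambda|\le1/\delta}\widehat g\,\lambda^{n-1}\bigr|\lesssim \delta^{\,b'-n+\frac12}\|g\|_{H^{b'}_t}$ (where $b'\le0$ controls the weight on $|\lambda|\le1/\delta$), every term contributes exactly $\frac{\|t^n\eta\|_{H^b_t}}{n!}\,\delta^{1-b+b'}\|g\|_{H^{b'}_t}$, and the series converges since $\|t^n\eta\|_{H^b_t}\lesssim n2^n$. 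In the high regime I would separate the two pieces of $e^{it\lambda}-1$: the constant piece $-\eta_\delta(t)\int_{|\lambda|>1/\delta}\frac{\widehat g(\lambda)}{i\lambda}\,d\lambda$ is $\eta_\delta$ times a scalar bounded by $\delta^{b'+\frac12}\|g\|_{H^{b'}_t}$ (here $b'>-\tfrac12$ is exactly what makes $\int_{|\lambda|>1/\delta}\langle\lambda\rangle^{-2b'}\lambda^{-2}\,d\lambda\sim\delta^{2b'+1}$ finite), and multiplying by $\|\eta_\delta\|_{H^b_t}\lesssim\delta^{\frac12-b}$ reproduces $\delta^{1-b+b'}$; the oscillatory piece $\eta_\delta(t)G(t)$, with $\widehat G=\frac{\widehat g}{i\lambda}\mathbf 1_{|\lambda|>1/\delta}$, is handled by a fractional Leibniz estimate $\|\eta_\delta G\|_{H^b_t}\lesssim \|\eta_\delta\|_{L^\infty}\|G\|_{H^b_t}+\delta^{-b}\|G\|_{L^2_t}$, where the constraint $b\le b'+1$ forces the symbol $\langle\lambda\rangle^{b}/|\lambda|$ restricted to $|\lambda|>1/\delta$ to gain the required power of $\delta$.

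The routine parts (part (i), the reduction, and the bookkeeping of the convergent series) are mechanical; the step I expect to be the genuine obstacle is the high-modulation oscillatory term $\eta_\delta G$, where the time cutoff does not commute with $H^b_t$ for $b>0$ and one must balance the loss $\delta^{-b}$ coming from the derivative of $\eta_\delta$ against the smoothing $1/\lambda$ and the endpoint constraint $b\le b'+1$. Getting the exponent to land precisely on $\delta^{1-b+b'}$, rather than something weaker, is where all three hypotheses $b'>-\tfrac12$, $b\ge0$, and $b\le b'+1$ are simultaneously used, so this is the delicate point to verify.
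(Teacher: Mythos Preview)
The paper does not give its own proof of this lemma; it simply cites \cite{KPV1996} and \cite{GTV1997}. Your outline is precisely the standard argument found in those references (and in Ginibre's Bourbaki notes): factorize the Fourier transform for (i), conjugate by the group to reduce (ii) to a purely temporal $H^{b'}_t\to H^b_t$ estimate for $\eta_\delta\!\int_0^t$, then split the $\lambda$-integral at $|\lambda|=1/\delta$, Taylor-expand on the low side, and separate the constant and oscillatory pieces on the high side. Your bookkeeping of the exponents is correct, including the point you flagged as delicate: the Leibniz-type bound $\|\eta_\delta G\|_{H^b_t}\lesssim \|G\|_{H^b_t}+\delta^{-b}\|G\|_{L^2_t}$ holds for $0\le b\le 1$ (by the endpoint cases and interpolation), and combining it with $\|G\|_{H^b_t}\lesssim\delta^{1-b+b'}\|g\|_{H^{b'}_t}$ (from $b\le b'+1$) and $\|G\|_{L^2_t}\lesssim\delta^{1+b'}\|g\|_{H^{b'}_t}$ (from the support $|\lambda|>1/\delta$) indeed lands both terms on $\delta^{1-b+b'}$. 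So there is no gap; your proposal matches what the cited references do.
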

The following lemma will be useful in the proof of the trilinear estimates
\begin{lemma}\label{lemxm1}
	\begin{itemize}
		\item[(i)] If $a,b>0$ and $a+b>1$, we have
		\begin{equation}\label{lfc1-1}
			\int_{\R} \dfrac{dx}{\langle x -\alpha \rangle^{a}\langle x -\beta \rangle^{b}} \lesssim \dfrac{1}{\langle \alpha -\beta \rangle^{c}},  \quad c=\min\{a,b, a+b-1\}.
		\end{equation}
		
		\item[(ii)]
		Let $a, \eta \in \R$, $a, \eta  \neq 0$, $b>1$, then
		\begin{equation}\label{xlfc1-2}
			\int_{\R} \dfrac{dx}{\langle a(x^2-\eta^2) \rangle^{b}}  \lesssim \dfrac{1}{|a\eta|}.
		\end{equation}
		
		\item[(iii)]
		Let $a, \eta \in \R$, $a, \eta  \neq 0$, $b>1$, then
		\begin{equation}\label{x1lfc1-2}
			\int_{\R} \dfrac{|x\pm \eta|\,dx}{\langle a(x^2\pm\eta^2) \rangle^{b}} \lesssim \dfrac{1}{|a|}.
		\end{equation}

		\item[(iv)] For $l>1/3$,
		\begin{equation}\label{x2lfc1-2}
			\int_{\R} \dfrac{dx}{\langle x^3+a_2x^2 +a_1 x+a_0 \rangle^{l}} \lesssim 1.
		\end{equation}
		
	\end{itemize}
\end{lemma}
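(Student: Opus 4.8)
The plan is to handle the four estimates one at a time; each is an elementary but parameter-uniform integral bound, and the recurring tool is a region decomposition together with the scalar fact that $\int_\R\langle x\rangle^{-p}\,dx\lesssim1$ for $p>1$. For (i) I would first use translation invariance to reduce to $\beta=0$ and $\alpha=\mu:=\alpha-\beta$, and then split $\R$ into $\{|x|\le|\mu|/2\}$, $\{|x-\mu|\le|\mu|/2\}$, and the remaining far region. On the first region $\langle x-\mu\rangle\gtrsim\langle\mu\rangle$ while the leftover integral $\int\langle x\rangle^{-a}$ is either $O(1)$ or $O(\langle\mu\rangle^{1-a})$; the second region is symmetric; on the far region $\langle x-\mu\rangle\approx\langle x\rangle$, so the integrand is $\approx\langle x\rangle^{-(a+b)}$ and integrates to $\langle\mu\rangle^{1-(a+b)}$ using $a+b>1$. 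Collecting the three exponents produces $c=\min\{a,b,a+b-1\}$; the only care needed is the bookkeeping that isolates this minimum.

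For (ii) and (iii) I would factor $x^2-\eta^2=(x-\eta)(x+\eta)$ (and for the $+$ sign simply use $x^2+\eta^2\ge\max\{x^2,\eta^2\}$). Near each root $\pm\eta$, setting $x=\pm\eta+y$ with $|y|\le|\eta|$ makes the remaining factor comparable to $2|\eta|$, so $\langle a(x^2-\eta^2)\rangle\gtrsim\langle a\eta y\rangle$ and $\int\langle a\eta y\rangle^{-b}\,dy\lesssim1/|a\eta|$; on the far region $|x|>2|\eta|$ one has $|x^2-\eta^2|\gtrsim x^2$, and after the rescaling $w=\sqrt{|a|}\,x$ the tail is again dominated by $1/|a\eta|$. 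This gives (ii), modulo splitting the last comparison into the regimes $|a|\eta^2\gtrsim1$ and $|a|\eta^2\lesssim1$. For (iii) I would bound the linear numerator by $|x\pm\eta|\le|x|+|\eta|$: the $|x|$-piece integrates exactly under $u=a(x^2\pm\eta^2)$, $du=2ax\,dx$, producing $\tfrac{1}{2|a|}\int_\R\langle u\rangle^{-b}\,du\lesssim1/|a|$, while the $|\eta|$-piece is $|\eta|$ times an integral estimated exactly as in (ii), hence $\lesssim|\eta|/|a\eta|=1/|a|$.

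For (iv) I would first complete the cube via $y=x+a_2/3$ to reduce to a depressed monic cubic $P$, leaving the integral unchanged, and then pass to the distribution function
$$\int_\R\langle P(x)\rangle^{-l}\,dx=\int_0^1\bigl|\{x:|P(x)|<s^{-1/l}-1\}\bigr|\,ds.$$
This reduces everything to the sublevel estimate $\bigl|\{x\in\R:|P(x)|\le\lambda\}\bigr|\lesssim\lambda^{1/3}$ uniformly in the coefficients. To prove that I would factor $P(x)=\prod_{j=1}^3(x-r_j)$ over $\mathbb{C}$: on the sublevel set $\prod_j|x-r_j|\le\lambda$ forces $\min_j|x-r_j|\le\lambda^{1/3}$, and for each $j$ the real set $\{x:|x-r_j|\le\lambda^{1/3}\}$ is an interval of length at most $2\lambda^{1/3}$ (using $|x-r_j|\ge|x-\operatorname{Re}r_j|$ when $r_j\notin\R$). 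Summing over the three roots gives the bound, after which $\int_0^1(s^{-1/l}-1)^{1/3}\,ds$ converges precisely when $l>1/3$.

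The hard part is not any single computation but securing uniformity in the parameters $a,\eta,\alpha,\beta,a_0,a_1,a_2$. The genuinely load-bearing step is the coefficient-free cubic sublevel estimate in (iv), whose exponent $1/3$ is exactly what dictates the threshold $l>1/3$; in (ii)--(iii) the analogous subtlety is checking that the near-root and tail contributions really balance to the single clean bound $1/|a\eta|$ (respectively $1/|a|$) across both regimes $|a|\eta^2\gtrsim1$ and $|a|\eta^2\lesssim1$.
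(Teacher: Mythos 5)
The paper itself gives no argument for this lemma: its ``proof'' is a pointer to the literature (part (i) to Takaoka, parts (ii)--(iii) to Carvajal, part (iv) to Bekiranov--Ogawa--Ponce), so your self-contained derivation is necessarily a different route, and for the most part a sound one. Part (i) is the standard three-region decomposition and produces exactly $c=\min\{a,b,a+b-1\}$; the only caveat (a defect of the statement itself, not of your argument) is that at the borderline $a=1$ or $b=1$ a logarithm appears, e.g.\ $\int_{\R}\langle x\rangle^{-1}\langle x-\mu\rangle^{-1}\,dx\sim\langle\mu\rangle^{-1}\log\langle\mu\rangle$, so the clean power bound really requires $a,b\neq1$; this is harmless here because the paper only applies (i) with exponents of the form $1\pm O(\epsilon)$. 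Part (iii) is correct: the substitution $u=a(x^2\pm\eta^2)$ handles the $|x|$-piece, and the $|\eta|$-piece reduces to a type-(ii) bound (for the $+$ sign, $x^2+\eta^2\ge2|x||\eta|$ gives $\int_{\R}\langle a\eta x\rangle^{-b}\,dx\lesssim1/|a\eta|$ directly, which is cleaner than interpolating from the $\max$ bound you quote). Part (iv) is correct and is the nicest piece of the proposal: the layer-cake reduction to the coefficient-free sublevel bound $|\{x:|P(x)|\le\lambda\}|\lesssim\lambda^{1/3}$ for monic cubics, proved by factoring over $\mathbb{C}$ and using $|x-r_j|\ge|x-\operatorname{Re}r_j|$ for real $x$, is uniform in $a_0,a_1,a_2$ and shows transparently where the threshold $l>1/3$ comes from.

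The one genuine gap is the tail estimate in part (ii). On $\{|x|>2|\eta|\}$ you use $|x^2-\eta^2|\gtrsim x^2$ and rescale, which yields $\int_{\R}\langle ax^2\rangle^{-b}\,dx\sim|a|^{-1/2}$. But $|a|^{-1/2}\lesssim|a\eta|^{-1}$ holds precisely when $|a|\eta^2\lesssim1$; in the regime $|a|\eta^2\gg1$ the rescaled bound is strictly weaker than the claimed $1/|a\eta|$, and splitting into the two regimes, as you propose, cannot repair this: what is needed there is a stronger pointwise lower bound on the tail, not a case analysis of the same bound. The fix is one line. For $|x|>2|\eta|$ one has $|x-\eta|\ge|x|/2$ and $|x+\eta|\ge|x|-|\eta|\ge|\eta|$, hence $|x^2-\eta^2|=|x-\eta|\,|x+\eta|\ge|x||\eta|/2$, so the tail is dominated by $\int_{\R}\langle a\eta x/2\rangle^{-b}\,dx\lesssim1/|a\eta|$, which is the desired bound in both regimes simultaneously. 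With that substitution your proof of the lemma is complete.
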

\begin{proof}
	The proof of \eqref{lfc1-1}  can be found in \protect\cite{Takaoka2000},  \eqref{xlfc1-2} and \eqref{x1lfc1-2} in \protect\cite{Carvajal2004} and \eqref{x2lfc1-2}  in  \protect\cite{BOP1997}.
\end{proof}
%
%

%
%

\secao{Trilinear estimates: proof of Proposition \ref{prop1}}\label{tri}

In this section the ideas in \protect\cite{Tao2001} plays a central role in the proof of Proposition \ref{prop1}. We remember some notations, results and follows the arguments contained therein.  Let $k\geq 2$ be an integer, a $[k;\R^{d+1}]$-multiplier is any function $m:\Gamma_k(\R^{d+1})\to\mathbb{C}$, where $\Gamma_k(\R^{d+1})$ denotes  the hyperplane $\Gamma_k(\R^{d+1})=\{(\xi_1,\ldots,\xi_k)\in (\R^{d+1})^k;\, \xi_1+\cdots+\xi_n=0\}$ endowed with the measure
$$\int_{_{\Gamma_k(\R^{d+1})}}f:=\int_{(\R^d)^{k-1}}f(\xi_1,\ldots,\xi_{k-1},-\xi_1-\cdots-\xi_k-1)d\xi_1d\xi_2\cdots d\xi_{k-1}.$$
The norm of a $[k;\R^{d+1}]$-multiplier $m$, denoted by $\|m\|_{[k;\R^{d+1}]}$, is the best constant such that 
$$\left|  \int_{\Gamma_k(\R^{d+1})}m(\xi)\prod_{j=1}^{n}f_j(\xi_j) \right|\leq \|m\|_{[k;\R^{d+1}]}\prod_{j=1}^{k}\|f_j\|_{L^2(\R^{d+1})},$$
holds for all test functions $f_j$ on $\R^{d+1}$.

Now we start to work on our trilinear estimates. By duality and Plancherel (see e.g. \protect\cite{Tao2001} or \protect\cite{CP2019}),  one can see that  the estimate \eqref{tlint-m1} is equivalent to
\begin{equation}\label{tln-e6}
	\Big|\int_{\xi_1+\cdots+\xi_4=0\atop \tau_1+\cdots+\tau_4=0}
	m(\xi_1,\tau_1,\cdots,\xi_4,\tau_4)\Pi_{j=1}^4\widetilde{f}_j(\xi_j, \tau_j)\Big| \lesssim \Pi_{j=1}^4\|f_j\|_{L^2_{\xi\tau}},
\end{equation}
where
\begin{equation}\label{tln-e7}
	m(\xi_1,\tau_1,\cdots,\xi_4,\tau_4):=\frac{\xi_4\,\langle\xi_4\rangle^{s}}{\langle\xi_1\rangle^s\langle\xi_2\rangle^k\langle\xi_3\rangle^k\langle\tau_1-\xi_1^3\rangle^{\frac12+\epsilon}\langle\tau_2-\alpha\xi_2^3\rangle^{\frac12+\epsilon}\langle\tau_3-\alpha\xi_3^3\rangle^{\frac12+\epsilon}\langle\tau_4-\xi_4^3\rangle^{\frac12-2\epsilon}}.
\end{equation}

In this way, recalling the definition of the norm $\|m\|_{[4;\R^2]}$ of the multiplier $m$,  the whole matter reduces to showing that
\begin{equation}\label{tln-e8}
	\|m\|_{[4;\R^2]}\lesssim 1.
\end{equation}
Observe that
\begin{equation}\label{xi-4}
	\xi_4\,\langle\xi_4\rangle^{s} \leq  \langle\xi_4\rangle^{s+1}\leq \langle\xi_4\rangle^{1/2}\langle\xi_4\rangle^{s+1/2}\leq \langle\xi_4\rangle^{1/2}( \langle\xi_1\rangle^{s+1/2}+\langle\xi_2\rangle^{s+1/2}+\langle\xi_3\rangle^{s+1/2}).
\end{equation}
We define
\begin{equation}\label{m1}
	m_1(\xi_1, \tau_1, \xi_2,\tau_2)=\frac{\langle\xi_1\rangle^{\frac12}}{\langle\tau_1-\xi_1^3 \rangle^{\frac12-2\epsilon} \langle\xi_2\rangle^k \langle\tau_2-\alpha\xi_2^3\rangle^{\frac12+\epsilon}},
\end{equation}
\begin{equation}\label{m12}
	m_2(\xi_1, \tau_1, \xi_2,\tau_2)=\frac{\langle\xi_1\rangle^{s-k+\frac12}}{\langle\tau_1-\alpha\xi_1^3 \rangle^{\frac12+\epsilon} \langle\xi_2\rangle^s \langle\tau_2-\xi_2^3\rangle^{\frac12+\epsilon}}.
\end{equation}
From \eqref{xi-4} and \eqref{tln-e7}, we get
\begin{equation}\label{mult-1}
	\begin{split}
		m\leq &\frac{\langle\xi_4\rangle^{\frac12}}{\langle\tau_4-\xi_4^3\rangle^{\frac12-2\epsilon}\langle\xi_3\rangle^k\langle\tau_3-\alpha\xi_3^3\rangle^{\frac12+\epsilon}}\frac{\langle\xi_1\rangle^{\frac12}}{\langle\tau_1-\xi_1^3 \rangle^{\frac12+\epsilon} \langle\xi_2\rangle^k \langle\tau_2-\alpha\xi_2^3\rangle^{\frac12+\epsilon}} \\
		&+ \frac{\langle\xi_4\rangle^{\frac12}}{\langle\tau_4-\xi_4^3\rangle^{\frac12-2\epsilon}\langle\xi_3\rangle^k\langle\tau_3-\alpha\xi_3^3\rangle^{\frac12+\epsilon}} \frac{\langle\xi_2\rangle^{s-k+\frac12}}{\langle\tau_2-\alpha\xi_2^3\rangle^{\frac12+\epsilon}\langle\xi_1\rangle^s\langle\tau_1-\xi_1^3\rangle^{\frac12+\epsilon}
		}\\
		&+ \frac{\langle\xi_4\rangle^{\frac12}}{\langle\tau_4-\xi_4^3\rangle^{\frac12-2\epsilon}\langle\xi_2\rangle^k\langle\tau_2-\alpha\xi_2^3\rangle^{\frac12+\epsilon}}\frac{\langle\xi_3\rangle^{s-k+\frac12}}{\langle\tau_3-\alpha\xi_3^3\rangle^{\frac12+\epsilon}\langle\xi_1\rangle^s\langle\tau_1-\xi_1^3\rangle^{\frac12+\epsilon}}\\
		=:&J_1+J_2+J_3.
	\end{split}
\end{equation}
Therefore, we have
\begin{equation}\label{J1}
	\begin{split}
		J_1
		&\leq m_1(\xi_4,\tau_4,\xi_3,\tau_3)\, m_1(\xi_1, \tau_1, \xi_2,\tau_2)
	\end{split}
\end{equation}
\begin{equation}\label{J2}
	\begin{split}
		J_2
		&\leq m_1(\xi_4,\tau_4,\xi_3,\tau_3)\,m_2(\xi_2,\tau_2,\xi_1,\tau_1),
	\end{split}
\end{equation}
\begin{equation}\label{J3}
	\begin{split}
		J_3&\leq m_1(\xi_4,\tau_4,\xi_2,\tau_2)\,m_2(\xi_3,\tau_3,\xi_1,\tau_1).
	\end{split}
\end{equation}

Now, using comparison principle, permutation and composition properties (see respectively Lemmas 3.1, 3.3 and 3.7 in \cite{Tao2001}), it is enough to bound $\|m_j\|_{[3;\R^2]}$, $j=1,2$, or equivalently,  
to show the following bilinear estimates
\begin{equation}\label{bil-1}
	\|uv\|_{L^2(\R^2)}\lesssim \|u\|_{X_{-\frac12, \frac12-2\epsilon}}\|v\|_{X^{\alpha}_{k, \frac12+\epsilon}},
\end{equation}
and
\begin{equation}\label{bil-2}
	\|uv\|_{L^2(\R^2)}\lesssim \|u\|_{X^{\alpha}_{k-s-\frac12, \frac12+\epsilon}}\|v\|_{X_{s, \frac12+\epsilon}}.
\end{equation}
This equivalence can be proved using again duality and a similar calculations as the ones used to obtain \eqref{tln-e6}.


Similarly,  the estimate \eqref{tlint-m2} is equivalent to
\begin{equation}\label{tln-e6x}
	\Big|\int_{\xi_1+\cdots+\xi_4=0\atop \tau_1+\cdots+\tau_4=0}
	M(\xi_1,\tau_1,\cdots,\xi_4,\tau_4)\Pi_{j=1}^4\widetilde{f}_j(\xi_j, \tau_j)\Big| \lesssim \Pi_{j=1}^4\|f_j\|_{L^2_{\xi\tau}},
\end{equation}
where
\begin{equation}\label{tln-e7x}
	M(\xi_1,\tau_1,\cdots,\xi_4,\tau_4):=\frac{\xi_4\,\langle\xi_4\rangle^{k}}{\langle\xi_1\rangle^s\langle\xi_2\rangle^s\langle\xi_3\rangle^k\langle\tau_1-\xi_1^3\rangle^{\frac12+\epsilon}\langle\tau_2-\xi_2^3\rangle^{\frac12+\epsilon}\langle\tau_3-\alpha\xi_3^3\rangle^{\frac12+\epsilon}\langle\tau_4-\alpha \xi_4^3\rangle^{\frac12-2\epsilon}}.
\end{equation}

In this way, recalling the definition of the norm $\|M\|_{[4;\R^2]}$ of the multiplier $M$,  the whole matter reduces to showing that
\begin{equation}\label{tln-e8x}
	\|M\|_{[4;\R^2]}\lesssim 1.
\end{equation}
Observe that
\begin{equation}\label{xi-5}
	\xi_4\,\langle\xi_4\rangle^{k} \leq  \langle\xi_4\rangle^{k+1}\leq \langle\xi_4\rangle^{1/2}\langle\xi_4\rangle^{k+1/2}\leq \langle\xi_4\rangle^{1/2}( \langle\xi_1\rangle^{k+1/2}+\langle\xi_2\rangle^{k+1/2}+\langle\xi_3\rangle^{k+1/2}).
\end{equation}
We define
\begin{equation}\label{m1x}
	M_1(\xi_1, \tau_1, \xi_2,\tau_2)=\frac{\langle\xi_1\rangle^{\frac12}}{\langle\tau_1-\alpha\xi_1^3 \rangle^{\frac12-2\epsilon} \langle\xi_2\rangle^s \langle\tau_2-\xi_2^3\rangle^{\frac12+\epsilon}},
\end{equation}
\begin{equation}\label{m12x}
	M_2(\xi_1, \tau_1, \xi_2,\tau_2)=\frac{\langle\xi_1\rangle^{k-s+\frac12}}{\langle\tau_1-\xi_1^3 \rangle^{\frac12+\epsilon} \langle\xi_2\rangle^k \langle\tau_2-\alpha\xi_2^3\rangle^{\frac12+\epsilon}}.
\end{equation}
From \eqref{xi-5} and \eqref{tln-e7x}, we get
\begin{equation}\label{mult-1a}
	\begin{split}
		M\leq &\frac{\langle\xi_4\rangle^{\frac12}}{\langle\tau_4-\alpha\xi_4^3\rangle^{\frac12-2\epsilon}\langle\xi_2\rangle^s\langle\tau_2-\xi_2^3\rangle^{\frac12+\epsilon}}\frac{\langle\xi_1\rangle^{k-s+\frac12}}{\langle\tau_1-\xi_1^3 \rangle^{\frac12+\epsilon} \langle\xi_3\rangle^k \langle\tau_3-\alpha\xi_3^3\rangle^{\frac12+\epsilon}} \\
		&+ \frac{\langle\xi_4\rangle^{\frac12}}{\langle\tau_4-\alpha\xi_4^3\rangle^{\frac12-2\epsilon}\langle\xi_1\rangle^s\langle\tau_1-\xi_1^3\rangle^{\frac12+\epsilon}} \frac{\langle\xi_2\rangle^{k-s+\frac12}}{\langle\tau_2-\xi_2^3\rangle^{\frac12+\epsilon}\langle\xi_3\rangle^k\langle\tau_3-\alpha \xi_3^3\rangle^{\frac12+\epsilon}
		}\\
		&+ \frac{\langle\xi_4\rangle^{\frac12}}{\langle\tau_4-\alpha\xi_4^3\rangle^{\frac12-2\epsilon}\langle\xi_1\rangle^s\langle\tau_1-\xi_1^3\rangle^{\frac12+\epsilon}}\frac{\langle\xi_3\rangle^{\frac12}}{\langle\tau_3-\alpha\xi_3^3\rangle^{\frac12+\epsilon}\langle\xi_2\rangle^s\langle\tau_2-\xi_2^3\rangle^{\frac12+\epsilon}}\\
		=:&I_1+I_2+I_3.
	\end{split}
\end{equation}
Therefore, we have
\begin{equation}\label{J1x}
	\begin{split}
		I_1
		&\leq M_1(\xi_4, \tau_4, \xi_2,\tau_2)\,M_2(\xi_1,\tau_1,\xi_3,\tau_3),
	\end{split}
\end{equation}
\begin{equation}\label{J2x}
	\begin{split}
		I_2
		&\leq M_1(\xi_4,\tau_4,\xi_1,\tau_1)\,M_2(\xi_2,\tau_2,\xi_3,\tau_3),
	\end{split}
\end{equation}
\begin{equation}\label{J3x}
	\begin{split}
		I_3&\leq M_1(\xi_4,\tau_4,\xi_1,\tau_1)\,M_1(\xi_3,\tau_3,\xi_2,\tau_2).
	\end{split}
\end{equation}
Analogously,  to prove that $\|M_j\|_{[3;\R^2]}\lesssim 1$, $j=1,2$ is equivalent respectively to show the following bilinear estimates
\begin{equation}\label{bil-1x}
	\|uv\|_{L^2(\R^2)}\lesssim \|u\|_{X^{\alpha}_{-\frac12, \frac12-2\epsilon}}\|v\|_{X_{s, \frac12+\epsilon}},
\end{equation}
and
\begin{equation}\label{bil-2x}
	\|uv\|_{L^2(\R^2)}\lesssim \|u\|_{X_{s-k-\frac12, \frac12+\epsilon}}\|v\|_{X^{\alpha}_{k, \frac12+\epsilon}}.
\end{equation}
%

Therefore, the proof of Proposition \ref{prop1} follows if we proof  the four bilinear estimates \eqref{bil-1}, \eqref{bil-2}, \eqref{bil-1x}  and \eqref{bil-2x}. In fact we prove the following propositions
\begin{proposition}\label{ProposCP}
	Let $s>-1/2$,  $\alpha\neq0,1$ and $0<\epsilon<\min\left\{\frac{2s+1}{15},\frac{1}{6}\right\}$. Then we have the bilinear estimates 
	
	\begin{equation}\label{bil-1.1}
		\|fg\|_{L^2(\R^2)}\lesssim \|f\|_{X_{-\frac12, \frac12-2\epsilon}}\|g\|_{X^{\alpha}_{s, \frac12+\epsilon}},
	\end{equation}
	and
	\begin{equation}\label{bil-2.1}
		\|fg\|_{L^2(\R^2)}\lesssim \|f\|_{X^{\alpha}_{-\frac12, \frac12-2\epsilon}}\|g\|_{X_{s, \frac12+\epsilon}}.
	\end{equation} 
\end{proposition}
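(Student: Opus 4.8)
The plan is to prove both \eqref{bil-1.1} and \eqref{bil-2.1} by reducing them, via Plancherel and Cauchy--Schwarz, to a single weighted convolution bound; the two estimates are symmetric under interchanging the roles of the groups $U(t)$ and $U^{\alpha}(t)$, so I will concentrate on \eqref{bil-1.1}. Writing $F(\xi_1,\tau_1)=\langle\xi_1\rangle^{-1/2}\langle\tau_1-\xi_1^3\rangle^{1/2-2\epsilon}|\hat f|$ and $G(\xi_2,\tau_2)=\langle\xi_2\rangle^{s}\langle\tau_2-\alpha\xi_2^3\rangle^{1/2+\epsilon}|\hat g|$, so that $\|F\|_{L^2}=\|f\|_{X_{-\frac12,\frac12-2\epsilon}}$ and $\|G\|_{L^2}=\|g\|_{X^{\alpha}_{s,\frac12+\epsilon}}$, Plancherel gives $\|fg\|_{L^2}=\|\hat f\ast\hat g\|_{L^2}$, and Cauchy--Schwarz in the convolution variables reduces matters to showing
\[
\sup_{\xi,\tau}\mathcal{W}(\xi,\tau)\lesssim 1,\qquad \mathcal{W}(\xi,\tau):=\int_{\R^2}\frac{\langle\xi_1\rangle\,d\xi_1\,d\tau_1}{\langle\tau_1-\xi_1^3\rangle^{1-4\epsilon}\,\langle\xi_2\rangle^{2s}\,\langle\tau_2-\alpha\xi_2^3\rangle^{1+2\epsilon}},
\]
where throughout $\xi_2=\xi-\xi_1$ and $\tau_2=\tau-\tau_1$.

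First I would carry out the $\tau_1$-integration. Since $1-4\epsilon>0$, $1+2\epsilon>0$ and their sum exceeds $1$, Lemma \ref{lemxm1}(i) applies and yields
\[
\mathcal{W}(\xi,\tau)\lesssim \int_{\R}\frac{\langle\xi_1\rangle\,d\xi_1}{\langle\xi_2\rangle^{2s}\,\langle\tau-\Phi(\xi_1)\rangle^{1-4\epsilon}},\qquad \Phi(\xi_1):=\xi_1^3+\alpha(\xi-\xi_1)^3.
\]
The decisive structural fact is that, because $\alpha\neq 1$, the polynomial $\Phi$ is a genuine cubic in $\xi_1$ with leading coefficient $1-\alpha\neq 0$; its derivative $\Phi'(\xi_1)=3(\xi_1^2-\alpha\xi_2^2)$ either never vanishes (when $\alpha<0$, so that $\Phi$ is strictly monotone and the phase is non-degenerate everywhere) or vanishes only at $\xi_1=\pm\sqrt{\alpha}\,\xi_2$ (when $\alpha>0$). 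This non-degeneracy is exactly what is unavailable for $\alpha=1$ and is the source of the low-regularity gain.

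Next I would split the $\xi_1$-line according to the relative sizes of $\langle\xi_1\rangle$ and $\langle\xi_2\rangle$ and according to the distance of $\xi_1$ to the stationary set of $\Phi$. Away from the stationary points the substitution $u=\tau-\Phi(\xi_1)$ has Jacobian comparable to $|\Phi'(\xi_1)|\sim\langle\xi_1\rangle^2$, and since $1-4\epsilon>1/3$ the resulting one-variable integral is controlled by Lemma \ref{lemxm1}(iv); the cubic decay then beats the weight $\langle\xi_1\rangle\langle\xi_2\rangle^{-2s}$ provided $s>-1/2$ and $\epsilon$ is small, which is where the hypothesis $0<\epsilon<\min\{(2s+1)/15,\,1/6\}$ enters to close the exponent count. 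In the region where $\xi_1$ is near a zero $\pm\sqrt{\alpha}\,\xi_2$ of $\Phi'$ (relevant only for $\alpha>0$), the phase behaves like $a(\xi_1^2-\eta^2)$ up to lower order, and the weighted one-dimensional bounds of Lemma \ref{lemxm1}(ii)--(iii), whose numerator $|x\pm\eta|$ and gain $|a\eta|^{-1}$ are tailored to this situation, are what absorb the factor $\langle\xi_1\rangle$ against the degenerating Jacobian.

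The main obstacle I anticipate is precisely this resonant/stationary regime: when $\tau$ is tuned so that $\tau-\Phi(\xi_1)$ stays small on a long $\xi_1$-interval, the phase provides no decay, and integrability must be recovered entirely from the geometry of the zero set of $\Phi'$ together with the smallness of the weight coming from $s>-1/2$. Keeping track of the competition between the growth $\langle\xi_1\rangle$ in the numerator, the possibly singular factor $\langle\xi_2\rangle^{-2s}$ for $-1/2<s<0$, and the degenerating Jacobian near the stationary points is the delicate part. The estimate \eqref{bil-2.1} is then obtained by the identical argument after replacing $\Phi$ by $\alpha\xi_1^3+(\xi-\xi_1)^3$, whose leading coefficient $\alpha-1$ is again nonzero, so that the same non-degeneracy and the same invocations of Lemma \ref{lemxm1} apply verbatim.
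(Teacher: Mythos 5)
Your reduction---Plancherel, Cauchy--Schwarz in $(\xi_1,\tau_1)$, then the $\tau_1$-integration via Lemma \ref{lemxm1}(i)---is exactly the paper's starting point, and your treatment of $\alpha<0$ is in substance the paper's Lemma \ref{lemma1.5}: there $\Phi'(\xi_1)=3(\xi_1^2-\alpha\xi_2^2)$ never vanishes, indeed $|\Phi'|\gtrsim\max\{\xi_1^2,\xi_2^2\}$, and the monotone substitution plus Lemma \ref{lemxm1}(iv) closes the bound. The genuine gap is precisely the regime you defer to the end as ``the main obstacle'': the stationary set when $\alpha>0$. That step does not merely need more care---it fails. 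At a stationary point $\xi_1^{*}$, where $\xi_1^{*2}=\alpha\,\xi_2^{*2}$ (equal group velocities of the two flows), both frequencies are forced to be comparable to $|\xi|$, so your numerator is $\langle\xi_1\rangle\langle\xi_2\rangle^{-2s}\sim\langle\xi\rangle^{1-2s}$, while $\Phi''(\xi_1^{*})=6(\xi_1^{*}+\alpha\xi_2^{*})\sim_{\alpha}|\xi|$ is nondegenerate. Choosing $\tau=\Phi(\xi_1^{*})$, the portion of your integral with $|\xi_1-\xi_1^{*}|\leq|\xi|^{-1/2}$ alone contributes $\gtrsim\langle\xi\rangle^{1-2s}\,|\xi|^{-1/2}=\langle\xi\rangle^{1/2-2s}$, so the supremum $\sup_{\xi,\tau}\mathcal{W}$ you reduced to equals $+\infty$ whenever $-1/2<s<1/4$, i.e.\ in most of the claimed range. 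No invocation of Lemma \ref{lemxm1}(ii)--(iii) can repair this: the most a nondegenerate stationary point can give back is the factor $|\xi|^{-1/2}$ already used (and, as a side point, those items require an exponent $b>1$, whereas after the $\tau_1$-integration you only have $1-4\epsilon<1$). The obstruction is not even an artifact of Cauchy--Schwarz being lossy: testing \eqref{bil-1.1} directly with two unit-thickness packets of width $N^{-1/2}$ placed on the curves $\tau=\xi^3$ and $\tau=\alpha\xi^3$ at frequencies $\sqrt{\alpha}N$ and $N$ (parallel tangents) gives $\|fg\|_{L^2}\gtrsim N^{-3/4}$ against $\|f\|_{X_{-\frac12,\frac12-2\epsilon}}\|g\|_{X^{\alpha}_{s,\frac12+\epsilon}}\sim N^{-3/4}N^{s-1/4}$, which again forces $s\geq 1/4$; this is the same resonant configuration that pins the $\alpha=1$ (mKdV) bilinear estimate at $s\geq 1/4$. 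So no elaboration of your stationary-phase analysis can complete the proof for positive $\alpha$.

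The paper never attempts your direct attack when $\alpha>0$, and this is the essential structural difference. It splits by the sign of $\alpha$: for $\alpha<0$ it runs the sup-integral argument (Lemma \ref{lemma1.5}), exactly where your sketch is sound; for $\alpha>1$ it does not estimate the integral at all, but rescales $u(x,t)=f(\alpha^{-1/3}x,t)$, $v(x,t)=g(\alpha^{-1/3}x,t)$, observing that this converts \eqref{bil-1.1} with parameter $\alpha$ into \eqref{bil-2.1} with parameter $1/\alpha\in(0,1)$ (using $\langle a\xi\rangle\sim_{a}\langle\xi\rangle$ and scaling of the Fourier transform), and then quotes that case from \cite{CP2019}. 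In particular, \eqref{bil-1.1} and \eqref{bil-2.1} are not ``symmetric under interchanging the roles of the groups'' in the way your plan assumes; they are exchanged by an anisotropic rescaling, and the whole case $\alpha>1$ of the Proposition rests on that exchange together with the external citation, not on any analysis of the resonant integral. As written, your proposal proves only the case $\alpha<0$.
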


\begin{proposition}\label{bil-LRX-1} Let $\alpha\in (-\infty,\,0)\cup(1,\,+\infty)$ and $0<\epsilon<\min\left\{\frac{2s+1}{15},\frac{1}{6}\right\}$. Then\\
	\noindent(a)  The inequality \eqref{bil-2} holds for any $(s,k)$ in the region:
	$$ R_1= \left\{(s,k);\,\, k,s>-1/2, \,\, s-k\leq 1/2\,\right\} \cup \left\{(s,k); \,\, -1/2< k, \,\, s=-1/2\,\right\}.$$
	\noindent(b)  The inequality \eqref{bil-2x}	holds for any  $(s,k)$ in the region:   
	$$  R_2=\left\{(s,k); \,\, k,s>-1/2, \,\, s-k\geq -1/2\,\right\} \cup \left\{(s,k); \,\, -1/2< s, \,\, k=-1/2\,\right\}.$$
	
\end{proposition}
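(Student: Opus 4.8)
The plan is to establish the two bilinear $L^2$ estimates \eqref{bil-2} and \eqref{bil-2x} by the Fourier restriction norm method, reducing each to a single one–dimensional integral that is controlled by Lemma \ref{lemxm1}. The two statements have identical structure: \eqref{bil-2x} is obtained from \eqref{bil-2} by interchanging the two dispersion relations and the pair $(s,k)$, a map that sends the region $R_1$ onto $R_2$. I would therefore prove (a) in full and obtain (b) by running the same scheme, the only change being that the relevant cubic phase has leading coefficient $1-\alpha$ instead of $\alpha-1$ (both nonzero, since $\alpha\neq 1$).

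For (a), after passing to the Fourier side and dualizing against an $L^2$ function $h$, the estimate \eqref{bil-2} is equivalent to $\|m\|_{[3;\R^2]}\lesssim 1$ for
\begin{equation*}
	m=\frac{1}{\langle\xi_1\rangle^{k-s-\frac12}\langle\tau_1-\alpha\xi_1^3\rangle^{\frac12+\epsilon}\langle\xi_2\rangle^{s}\langle\tau_2-\xi_2^3\rangle^{\frac12+\epsilon}},
\end{equation*}
where the third (dual) leg carries no weight. Fixing that unweighted leg $(\xi_3,\tau_3)$ and applying Cauchy--Schwarz twice, the matter reduces to the uniform bound
\begin{equation*}
	\sup_{\xi_3,\tau_3}\int_{\R^2}\frac{d\xi_1\,d\tau_1}{\langle\xi_1\rangle^{2(k-s)-1}\langle\tau_1-\alpha\xi_1^3\rangle^{1+2\epsilon}\langle\xi_2\rangle^{2s}\langle\tau_2-\xi_2^3\rangle^{1+2\epsilon}}\lesssim 1,\qquad \xi_2=-\xi_3-\xi_1,\ \tau_2=-\tau_3-\tau_1.
\end{equation*}
I would integrate in $\tau_1$ first by \eqref{lfc1-1}; since both modulation exponents equal $1+2\epsilon>\tfrac12$, this costs $\langle P(\xi_1)\rangle^{-(1+2\epsilon)}$, where
\begin{equation*}
	P(\xi_1)=\alpha\xi_1^3+\xi_2^3+\tau_3=(\alpha-1)\xi_1^3-3\xi_3\,\xi_1^2-3\xi_3^2\,\xi_1+(\tau_3-\xi_3^3).
\end{equation*}
The decisive structural fact is that the leading coefficient $\alpha-1$ is nonzero precisely because $\alpha\neq 1$, so $P$ is a genuine cubic; the $\tau_3,\xi_3$ dependence only moves the lower–order coefficients, and the unweighted integral $\int\langle P(\xi_1)\rangle^{-(1+2\epsilon)}\,d\xi_1$ is $\lesssim 1$ by \eqref{x2lfc1-2} (with $l=1+2\epsilon>\tfrac13$, after normalizing $P$ to monic form at the cost of an $\alpha$-dependent constant).

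It remains to absorb the frequency weight $\langle\xi_1\rangle^{1-2(k-s)}\langle\xi_2\rangle^{-2s}$, which may grow, and this is the technical heart of the argument. I would decompose $|\xi_1|\sim N$ dyadically and separate the regimes according to the relative sizes of $|\xi_1|,|\xi_2|,|\xi_3|$, exploiting two mechanisms. On the non-resonant part of each dyadic shell one has $\langle P(\xi_1)\rangle\gtrsim_\alpha N^3$, so that shell contributes at most $N^{\,2-2(k-s)-3(1+2\epsilon)}$ (times the relevant power of $\langle\xi_2\rangle$), and the geometric sum over $N$ converges precisely because $s-k<\tfrac12+3\epsilon$. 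Near a simple root $\xi_1^{\ast}\sim N$ of $P$ one has $|P'(\xi_1^{\ast})|\gtrsim_\alpha N^2$ — again because $\alpha-1\neq 0$ — so the change of variables $u=P(\xi_1)$ produces the gain $N^{-2}$, which absorbs the weight under $s,k>-\tfrac12$. The low-frequency shells are harmless and fall directly under \eqref{x2lfc1-2}. Tracking these estimates across the mixed regimes, one finds that they consume exactly the open inequalities $s,k>-\tfrac12$ and $s-k\le\tfrac12$ defining $R_1$, which is the source of the sharpness.

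The main obstacle I expect, beyond the bookkeeping just described, is the endpoint line $\{s=-\tfrac12,\ -\tfrac12<k\}$ of $R_1$, where the admissible range $\epsilon<(2s+1)/15$ degenerates and the argument above has no margin. For this line I would avoid the resonance analysis entirely: at $s=-\tfrac12$ the estimate \eqref{bil-2} reads $\|uv\|_{L^2}\lesssim\|u\|_{X^{\alpha}_{k,\frac12+\epsilon}}\|v\|_{X_{-\frac12,\frac12+\epsilon}}$, and since $\tfrac12-2\epsilon\le\tfrac12+\epsilon$ yields $\|v\|_{X_{-1/2,1/2-2\epsilon}}\le\|v\|_{X_{-1/2,1/2+\epsilon}}$, this follows at once from \eqref{bil-1.1} of Proposition \ref{ProposCP} (with $s$ there replaced by $k>-\tfrac12$) after exchanging the two factors. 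Part (b) is then handled by the symmetry described above, with $P$ replaced by the cubic of leading coefficient $1-\alpha$ and $R_1$ replaced by its image $R_2$, completing the proof; the sharpness of $s,k>-\tfrac12$ and $|s-k|\le\tfrac12$ surfacing in the two mechanisms is exactly what is reflected in the failure results of Propositions \ref{prop1ill}--\ref{trilinendpoint}.
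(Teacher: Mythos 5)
Your reduction --- duality, Cauchy--Schwarz, integration in $\tau_1$ via \eqref{lfc1-1}, leaving a one--dimensional integral of $\langle\xi_1\rangle^{1-2(k-s)}\langle\xi_2\rangle^{-2s}\langle P(\xi_1)\rangle^{-(1+2\epsilon)}$ --- is the same as the paper's, as are your endpoint argument for $s=-\tfrac12$ (deducing it from \eqref{bil-1.1} using that the modulation exponent $\tfrac12-2\epsilon$ is smaller than $\tfrac12+\epsilon$) and the symmetry reduction of (b) to (a). The genuine gap is in the step you yourself call the technical heart. Your dichotomy --- on each dyadic shell either $\langle P(\xi_1)\rangle\gtrsim_\alpha N^3$, or $\xi_1$ lies near a simple root of $P$ at which $|P'|\gtrsim_\alpha N^2$ --- is false for $\alpha>1$, which is half of the admissible range (cf.\ Remark \ref{obsalpha}). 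Indeed $P'(\xi_1)=3(\alpha-1)\xi_1^2-6\xi_3\xi_1-3\xi_3^2$ has discriminant $36\alpha\xi_3^2>0$, hence real zeros $\xi_1^{\pm}=\frac{(1\pm\sqrt{\alpha})\,\xi_3}{\alpha-1}$, which are nonzero constant multiples of $\xi_3$; and since you take a supremum over $(\xi_3,\tau_3)$, nothing prevents choosing $\tau_3$ so that $P(\xi_1^{+})=0$, i.e.\ so that $P$ has a double root there. Near that point both $P$ and $P'$ are small and only $|P''|\sim N$ survives, so $\langle P(\xi_1)\rangle\lesssim 1$ on an interval of length $\sim N^{-1/2}$ on which $|\xi_1|\sim|\xi_2|\sim|\xi_3|\sim N$ and the weight is $\sim N^{1-2k}$. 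That single interval contributes $\sim N^{\frac12-2k}$ to your sup-integral, which is unbounded for every $k<\tfrac14$; neither of your two mechanisms covers this stationary-point region, so the scheme as described cannot close for $k<\tfrac14$ and misses most of the claimed region $R_1$. (For $\alpha<0$ your dichotomy is correct: the discriminant $36\alpha\xi_3^2$ is then negative and $|P'|\gtrsim_\alpha \xi_1^2+\xi_3^2$ everywhere; the difficulty is exclusively the stationary point present when $\alpha>1$.)

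This resonant region is precisely what the paper's proof is organized around, and it is the ingredient your proposal lacks. After the same reduction, the paper splits the $\xi_1$-integral according to the relative sizes of $\langle\xi_1\rangle$ and $\langle\xi_2\rangle$ (the sets $A$, $B$ in \eqref{setAB}); on the part containing the resonance $|\xi_1|=\sqrt{\alpha}\,|\xi_2|$ it rescales the phase by $1/\alpha$ and invokes Lemma \ref{lemma1.5} (estimate \eqref{X1}), whose case $0<\alpha<1$ is not proved in this paper but imported from \protect\cite{CP2019}; on the complementary part it applies Lemma \ref{lemma1.7}, whose entire content is the choice of the constant $\iota$ in \eqref{Eqk} guaranteeing that on the region of integration the phase derivative is bounded below ($H'\ge\lambda\alpha\xi^2$), i.e.\ that the stationary point is avoided. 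Any complete argument for $\alpha>1$ must state explicitly what happens where $P$ and $P'$ are simultaneously small --- by restricting the domain as in Lemma \ref{lemma1.7}, by a lemma of the type \eqref{X1}, or otherwise --- and a blanket assertion that roots of $P$ at scale $N$ have derivative $\gtrsim N^2$ does not do this.
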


Before we prove these results, we establish some preliminary results.

\begin{lemma}\label{lemma1.5}Let $\alpha<1$ ($\alpha\neq0$), $s>-1/2$ and $0<\epsilon<\min\left\{\frac{2s+1}{15},\frac{1}{6}\right\}$. Then we have
	\begin{equation}\label{X1}
		\sup  \limits_{\xi,\tau} 
		\int \limits_{\R} \frac{\langle \xi_2\rangle}{\langle\xi_1 \rangle^{2s}\langle \tau-\xi_2^ 3 -\alpha\xi_1^ 3\rangle^{1-4\epsilon}} d\xi_1\lesssim 1,
	\end{equation}
	where $\xi_2:=\xi-\xi_1$, for a fixed $\xi\in\R$.
\end{lemma}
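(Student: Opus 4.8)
The starting point is to recognize \eqref{X1} as the kernel estimate that drives the bilinear bound \eqref{bil-1.1}: writing $f,g$ through their Bourgain weights, applying Cauchy--Schwarz in $(\xi_1,\tau_1)$ and integrating out $\tau_1$ by means of \eqref{lfc1-1}, the proof of \eqref{bil-1.1} collapses to the assertion that the integral in \eqref{X1} is finite uniformly in $(\xi,\tau)$. Accordingly I fix $\xi$ and introduce the phase $g(\xi_1):=\xi_2^3+\alpha\xi_1^3$, $\xi_2=\xi-\xi_1$, a cubic in $\xi_1$ with leading coefficient $\alpha-1\ne0$ and derivative $g'(\xi_1)=3(\alpha\xi_1^2-\xi_2^2)$.

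Everything hinges on the sign of $\alpha$. For $\alpha<0$ one has $g'(\xi_1)=-3(|\alpha|\xi_1^2+\xi_2^2)$, so $g'$ never vanishes away from the origin, $g$ is a global (decreasing) diffeomorphism of $\R$, and one has the two quantitative facts $|g'(\xi_1)|\gtrsim \xi_1^2+\xi^2$ and $\xi_2^2\le\tfrac13|g'(\xi_1)|$, the latter giving $\langle\xi_2\rangle\lesssim\langle g'(\xi_1)\rangle^{1/2}$. This last inequality is the key device: it lets the troublesome numerator $\langle\xi_2\rangle$ be absorbed into a half power of the Jacobian of the substitution $u=\tau-g(\xi_1)$. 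Carrying out that substitution, $du=-g'(\xi_1)\,d\xi_1$, and using $\langle\xi_2\rangle\lesssim\langle g'\rangle^{1/2}$, the integral in \eqref{X1} is bounded by
\begin{equation*}
\int_{\R}\frac{du}{\langle\xi_1(u)\rangle^{2s}\,|g'(\xi_1(u))|^{1/2}\,\langle u\rangle^{1-4\epsilon}}.
\end{equation*}

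I would then estimate this one-dimensional integral by tracking $\xi_1(u)$ against the unique root $\xi_1^0$ of $g(\xi_1^0)=\tau$. For large $|u|$ one has $|\xi_1|\sim|u|^{1/3}$, hence $|g'|^{1/2}\sim|u|^{1/3}$ and $\langle\xi_1\rangle^{-2s}\sim|u|^{-2s/3}$, so the integrand decays like $|u|^{-(2s+1)/3-1+4\epsilon}$; this tail converges precisely when $\epsilon<\tfrac{2s+1}{12}$, which is comfortably ensured by the hypothesis $\epsilon<\tfrac{2s+1}{15}$, and the bound it produces does not see $\tau$. For the remaining bounded range of $u$ the lower bound $|g'|\gtrsim\xi_1^2+\xi^2$ together with $\langle u\rangle^{-(1-4\epsilon)}\le1$ and $2s+1>0$ (i.e.\ $s>-\tfrac12$) yields a contribution that is again $\lesssim1$ uniformly. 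Adding the two ranges gives \eqref{X1}.

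The real obstacle is the uniformity in $(\xi,\tau)$, because the weight $\langle\xi_2\rangle/\langle\xi_1\rangle^{2s}$ is genuinely unbounded in $\xi$ and no pointwise comparison with a power of $\langle\tau-g\rangle$ can hold near the zero set of the phase. What saves the estimate is structural rather than pointwise: for $\alpha<0$ the phase is strictly monotone with $|g'|\gtrsim\xi^2+\xi_1^2$ and satisfies $\xi_2^2\lesssim|g'|$, so $\langle\xi_2\rangle$ is reassigned to the Jacobian instead of being controlled on its own, and the burden is shifted entirely onto the size of $\epsilon$. I expect the delicate point of the write-up to be the simultaneous bookkeeping of $\langle\xi_1(u)\rangle$, $|g'(\xi_1(u))|$ and the range of $u$ as both $\xi$ and $\tau$ vary over $\R$ --- each individual inequality is elementary, but keeping the final bound free of $\xi$ and $\tau$ requires care, and it is exactly the monotonicity of $g$ (valid here because $\alpha<0$, the range relevant after the reduction of Remark \ref{obsalpha}) that makes the single change of variables legitimate.
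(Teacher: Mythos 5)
Your reduction step is sound in spirit and is in fact close to the paper's own devices ($\xi_1^2\lesssim H'$ and $\langle\xi_2\rangle\langle\xi\rangle\lesssim 1+H'$): for $\alpha<0$ the phase is strictly monotone, $\xi_2^2\le\tfrac13|H'|$, so $\langle\xi_2\rangle$ can indeed be traded for $\langle H'\rangle^{1/2}$ and the substitution $u=\tau-g(\xi_1)$ is legitimate (modulo the region $|H'|\lesssim 1$, where $\langle H'\rangle^{1/2}/|H'|\lesssim |H'|^{-1/2}$ fails; there $|\xi_1|,|\xi_2|\lesssim1$ and the integrand is bounded directly --- a small omission). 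The genuine gap is in your estimation of the resulting one-dimensional integral. The claim that $|\xi_1(u)|\sim|u|^{1/3}$ for large $|u|$ is \emph{not} uniform in $(\xi,\tau)$: since $\xi_1(u)$ solves $g(\xi_1)=\tau-u$, the asymptotic only sets in once $|u|\gg|\tau|+|\xi|^3$, a threshold depending on the very parameters over which the supremum is taken. Concretely, take $\xi=0$, $\tau$ large and $\xi_1\in[1,2]$: then $|u|\sim|\tau|$ is large while $\langle\xi_1\rangle^{2s}|H'|^{1/2}\sim_\alpha 1$, so the true integrand is $\sim|\tau|^{-(1-4\epsilon)}$, exceeding your claimed decay $|u|^{-(2s+1)/3-1+4\epsilon}$ by the unbounded factor $|\tau|^{(2s+1)/3}$; your pointwise bound is simply false there. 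Nor does the other half of your dichotomy absorb this regime: the ``remaining bounded range of $u$'' is in truth $|u|\lesssim|\tau|+|\xi|^3$, not an absolute constant, and discarding $\langle u\rangle^{-(1-4\epsilon)}\le1$ on it, as you propose, gives (undoing the substitution, with $\xi=0$)
\begin{equation*}
\int_{|u|\le|\tau|/2}\frac{du}{\langle\xi_1(u)\rangle^{2s}|H'(\xi_1(u))|^{1/2}}
\ \sim\ \int_{|\xi_1|\sim|\tau|^{1/3}}\frac{|H'(\xi_1)|^{1/2}}{\langle\xi_1\rangle^{2s}}\,d\xi_1
\ \sim\ |\tau|^{\frac{2-2s}{3}},
\end{equation*}
which blows up for the relevant range of $s$. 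So the regime ``$|u|$ large because $\tau$ or $\xi^3$ is large while $\xi_1$ stays moderate'' escapes both halves of your argument --- exactly the uniformity issue you yourself identify as the real obstacle, but do not actually overcome.

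The paper closes this hole by never inverting $u\mapsto\xi_1(u)$; instead it compares $\langle H\rangle$ \emph{pointwise} with powers of $\langle\xi_1\rangle$ (and $\langle\xi\rangle$), splitting into $A=\{\langle H\rangle\lesssim|\xi_1|^3\}$ and $B=\{\langle H\rangle\gtrsim|\xi_1|^3\}$ when $|\xi|<2|\xi_1|$, and into $\langle H\rangle\lessgtr\langle\xi_1\rangle^{a}\langle\xi\rangle^{b}$, $a+b=3$, $b=(1-4\epsilon)^{-1}$, when $2|\xi_1|\le|\xi|$. On $A$ it spends a tiny power $\langle H\rangle^{5\epsilon}\lesssim|\xi_1|^{15\epsilon}$ (or $\langle\xi_1\rangle^{5a\epsilon}\langle\xi\rangle^{5b\epsilon}$) and uses the \emph{full} Jacobian $H'$ against $\langle H\rangle^{1+\epsilon}$ in the change of variables; on $B$ it converts the $\langle H\rangle$-decay into $\langle\xi_1\rangle$-decay of exponent $2s+2-12\epsilon>1$. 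This two-regime bookkeeping is precisely what makes the final bound free of $(\xi,\tau)$, and it is also where the hypothesis $\epsilon<\frac{2s+1}{15}$ is consumed; your computation only ever produces the weaker constraint $\epsilon<\frac{2s+1}{12}$, which is a telltale sign that it is not seeing the full difficulty. To repair your write-up you would have to reinstate some version of this $A$/$B$ decomposition, at which point the argument essentially becomes the paper's.
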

\begin{proof}
	For the case $0<\alpha <1$, see  in \cite{CP2019}, Lemma 3.2, the estimative of the item labeled by them as (3.12). We will prove the case $\alpha< 0$. We denote by $L_1$ the integral in \eqref{X1}. For a fixed  $\xi$ and $\tau$, let
	\begin{equation}
		H(\xi_1):=\tau-\xi_2^3-\alpha \xi_1^ 3=\tau-\xi_2^3+|\alpha|\xi_1^ 3.
	\end{equation}
	We have 
	$$H'(\xi_1)=3\left[ \xi^ 2_2+|\alpha| \xi_1^ 2\right]> 0.$$
	Thus, the function $\xi_1\mapsto H(\xi_1)$ is monotone on $\mathbb R$. We divide the proof into the following two cases:

	\noindent\textbf{Case 1.} $\bm{(|\xi|<2|\xi_1|)}$ In this case, we have $\langle \xi _2\rangle\lesssim\langle \xi_1\rangle$.  Thus
	\begin{equation}
		\begin{array}{rcl}
			\chi_{\{|\xi|<2|\xi_1|\}}L_1&=&\displaystyle \int \limits_{|\xi_1|> |\xi|/2} \frac{\langle \xi_2\rangle \langle \xi_1 \rangle}{\langle\xi_1 \rangle^{2s+1}\langle \tau-\xi_2^ 3 +|\alpha|\xi_1^ 3\rangle^{1-4\epsilon}} d\xi_1\lesssim\displaystyle  \int \limits_{\R} \frac{\langle \xi_1 \rangle^ 2}{\langle\xi_1 \rangle^{2s+1}\langle H(\xi_1)\rangle^{1-4\epsilon}} d\xi_1\\
			&=&\displaystyle  \int \limits_{\mathbb R} \frac{1}{\langle\xi_1 \rangle^{2s+1}\langle H(\xi_1)\rangle^{1-4\epsilon}} d\xi_1+ \displaystyle  \int \limits_{\R} \frac{\xi_1^ 2}{\langle\xi_1 \rangle^{2s+1}\langle H(\xi_1)\rangle^{1-4\epsilon}} d\xi_1\\
			&=&J_1+J_2.
		\end{array}
	\end{equation}
	Using \eqref{x2lfc1-2} we have $J_1 \lesssim 1$ provided $0<\epsilon< \frac{1}{6}$. In what follows, we estimate $J_2$ integrating over: \textbf{(a)} $|\xi_1|<1 $ and \textbf{(b)} $|\xi_1|\ge 1$, separetely. 
	In the first situation, using that $2s+1\geq 0$, we have 
	\begin{equation}
		\begin{array}{rcl}
			\chi_{\{|\xi_1|<1\}}J_2&\lesssim& \displaystyle \int \limits_{|\xi_1|<1} \frac{1}{\langle H(\xi_1) \rangle^{1-4\epsilon} }d\xi_1\lesssim 1.
		\end{array}
	\end{equation}
	For the second case, considering the sets $$A=\left\{\xi_1: \langle H(\xi_1) \rangle \lesssim |\xi_1|^3\right\}\ \ \ \textrm{and}\ \ \  B=\left\{\xi_1: \langle H(\xi_1) \rangle \gtrsim |\xi_1|^3\right\},$$ we have 
	\begin{equation}\label{3.18}
		\begin{array}{rcl}
			\chi_{\{|\xi_1| \geq 1\}}J_2&=& \displaystyle  \int \limits_{|\xi_1| \geq 1} \frac{\xi_1^ 2}{\langle\xi_1 \rangle^{2s+1}\langle H(\xi_1)\rangle^{1-4\epsilon}}\chi_{_{A}}(\xi_1) d\xi_1+\displaystyle  \int \limits_{|\xi_1| \geq 1} \frac{\xi_1^ 2}{\langle\xi_1 \rangle^{2s+1}\langle H(\xi_1)\rangle^{1-4\epsilon}}\chi_{_{B}}(\xi_1) d\xi_1\\
			&\lesssim & \displaystyle \int \limits_{|\xi_1|\geq 1} \frac{H'(\xi_1) \langle H(\xi_1)\rangle^{5\epsilon}}{\langle\xi_1 \rangle^{2s+1}\langle H(\xi_1)\rangle^{1+\epsilon}}\chi_{_{A}}(\xi_1) d\xi_1+\displaystyle \int \limits_{|\xi_1|\geq 1} \frac{|\xi_1|^ 2}{|\xi_1|^{2s+1}|\xi_1|^{3-12\epsilon}} d\xi_1\\
			&\lesssim & \displaystyle \int \limits_{\R} \frac{H'(\xi_1) |\xi_1|^{15\epsilon}}{\langle\xi_1 \rangle^{2s+1}\langle H(\xi_1)\rangle^{1+\epsilon}} d\xi_1+\displaystyle \int \limits_{|\xi_1|\geq 1} \frac{|\xi_1|^ 2}{|\xi_1|^{2s+1}|\xi_1|^{3-12\epsilon}} d\xi_1,
		\end{array}
	\end{equation}
	where in the first integral we use that $\xi_1^ 2\lesssim H'(\xi_1)$. Obviously, the second integral is $\lesssim 1$, provided $0<\epsilon<\frac{2s+1}{12}$. For the first integral, performing the change of variables $x=H(\xi_1)$ on $\R$, if $0<\epsilon<\frac{2s+1}{15}$, we obtain  
	\begin{equation}
		\displaystyle \int \limits_{\R} \frac{H'(\xi_1) |\xi_1|^{15\epsilon}}{\langle\xi_1 \rangle^{2s+1}\langle H(\xi_1)\rangle^{1+\epsilon}} d\xi_1		\lesssim \int \limits_{\R} \frac{dx}{\langle x \rangle^{1+\epsilon}}\lesssim 1.
	\end{equation}
	\noindent\textbf{Case 2.} $\bm{(2|\xi_1|\leq |\xi|)}$ Because $\xi_1+\xi_2=\xi$, then we have 
	\begin{align}
		\max\left\{\langle \xi_1\rangle,\,\langle \xi_2\rangle\right\}&\lesssim \langle \xi\rangle\label{l01},\\
		\langle \xi_2\rangle \langle \xi \rangle \lesssim  \langle \xi_2 \rangle^ 2&\lesssim 1+H'(\xi_1)\label{l02}.
	\end{align}
	Fix $0<\epsilon<1/6$, $b=(1-4\epsilon)^{-1}$ and $a$ such that  $a+b=3$. Considering the sets  $$A=\left\{\xi_1: \langle H(\xi_1) \rangle \lesssim \langle\xi_1\rangle^a\langle \xi\rangle^b\right\}\ \ \  \textrm{and} \ \ \ B=\left\{\xi_1: \langle H(\xi_1) \rangle \gtrsim \langle\xi_1\rangle^a\langle \xi\rangle^b\right\},$$ we have 
	\begin{align*}
		\chi_{\{2|\xi_1| \leq |\xi|\}} L_1&=\displaystyle \int \limits_{|\xi_1| \leq |\xi| /2} \frac{\langle \xi_2\rangle \langle \xi \rangle}{\langle\xi_1 \rangle^{2s}\langle \xi \rangle \langle H(\xi_1)\rangle^{1-4\epsilon}}\chi_{_{A}}(\xi_1) d\xi_1+\displaystyle \int \limits_{\R} \frac{\langle \xi_2\rangle }{\langle\xi_1 \rangle^{2s} \langle H(\xi_1)\rangle^{1-4\epsilon}}\chi_{_{B}}(\xi_1) d\xi_1\\
		&\lesssim \displaystyle  \int \limits_{\R} \frac{1}{\langle\xi_1 \rangle^{2s+1}\langle H(\xi_1)\rangle^{1-4\epsilon}} \chi_{_{A}}(\xi_1)d\xi_1+ \displaystyle  \int \limits_{|\xi_1| \leq |\xi| /2} \frac{H'(\xi_1)}{\langle\xi_1 \rangle^{2s}\langle \xi \rangle\langle H(\xi_1)\rangle^{1-4\epsilon}} \chi_{_{A}}(\xi_1)d\xi_1\\
		&\ \ \ \ \ + \int_{\R} \dfrac{\langle  \xi\rangle   }{\langle  \xi_1\rangle^{2s} \langle \xi_1\rangle^{a(1-4 \epsilon)} \langle \xi \rangle^{b(1-4 \epsilon)}  } \chi_{_{B}}(\xi_1)d\xi_1\\ 
		&=J_1+J_2+J_3.
	\end{align*}
	By \eqref{x2lfc1-2} we have $J_1 \lesssim 1$ provided $0<\epsilon< \frac{1}{6}$ and $2s+1\ge 0$. For $J_3$, remembering the definitions of $a$ and $b$, if $0<\epsilon<(2s+1)/12$, we have 
	$$J_3\lesssim \int_{\R}\frac{d\xi_1}{\langle \xi_1\rangle^{2+2s-12\epsilon} }\lesssim 1.$$
	For $J_2$, taking account \eqref{l01},  if $0<\epsilon<5/9$, we have $1-5b\epsilon>0$, and so $\langle \xi_1\rangle^{1-5b\epsilon}\lesssim\langle \xi\rangle^{1-5b\epsilon}$. Thus, considering that $2s+1\geq 0$ and the definition of $a$ and $b$, if $0<\epsilon<(2s+1)/15$ we have
	\begin{align*}
		J_2&=	\int_{|\xi_1| \leq |\xi| /2} \dfrac{  H'(\xi_1)  \chi_{_{A}}(\xi_1)}{ \langle \xi_1 \rangle^{2s}\langle \xi \rangle\langle H(\xi_1) \rangle^{1-4\epsilon}  } d\xi_1\lesssim   \int_{|\xi_1| \leq |\xi| /2} \dfrac{ H'(\xi_1) \langle H(\xi_1) \rangle^{5\epsilon} \chi_{_{A}}(\xi_1)}{\langle \xi_1 \rangle^{2s} \langle \xi \rangle \langle H(\xi_1) \rangle^{1+\epsilon}  } d\xi_1\\
		\lesssim &  \int_{|\xi_1| \leq |\xi| /2} \dfrac{ H'(\xi_1)  }{\langle \xi_1 \rangle^{2s-5 a\epsilon} \langle \xi \rangle^{1-5b \epsilon}\langle H(\xi_1) \rangle^{1+\epsilon}  } d\xi_1\lesssim   \int_{\R} \dfrac{ H'(\xi_1)  }{\langle  \xi_1 \rangle^{2s+1-5 (a+b)\epsilon}\langle H(\xi_1) \rangle^{1+\epsilon}  } d\xi_1\\
		\lesssim& \int_{\R}\frac{H'(\xi_1)}{\langle H(\xi_1)\rangle^{1+\epsilon}}d\xi_1.
	\end{align*}
	Again, making the  change  of variables  $x=H (\xi_1)$, we get the desired bound.
\end{proof}
Now we are in position to prove Proposition \ref{ProposCP}.
\begin{proof}[Proof of the Proposition \ref{ProposCP}] First of all, we remember that the case $0<\alpha<1$ was proved in \cite{CP2019}. So, let's assume that $\alpha\in(-\infty,0)\cup(1,\infty)$.
	We start to prove  that the inequality \eqref{bil-1.1}  hold  if $\alpha>1$. Let  $u \in X_{-\frac12, \frac12-2\epsilon}$ and $v\in X^{\alpha}_{s, \frac12+\epsilon}$ with $\epsilon>0$ and $s>- \frac12$. Considering $f$ and $g$ such that
	\begin{equation*}
		u(x,t)=f(\alpha^{-1/3}x,t), \quad v(x,t)=g(\alpha^{-1/3}x,t),
	\end{equation*}
	using that $\langle a \xi \rangle \sim_{a} \langle \xi \rangle$, for $a\neq 0$, and scaling properties of the Fourier transform, we have
	$$
	\|f\|_{X_{-\frac12, \frac12-2\epsilon}^{1/\alpha}}\sim_{\alpha} \|u\|_{X_{-\frac12, \frac12-2\epsilon}}\quad \textrm{and}\quad \|g\|_{X_{s, \frac12+\epsilon}}\sim_{\alpha} \|v\|_{X^{\alpha}_{s, \frac12+\epsilon}},
	$$
	thus $f \in X_{-\frac12, \frac12-2\epsilon}^{1/\alpha}$ and $g\in X_{s, \frac12+\epsilon}$. Because $1/\alpha \in (0,1)$ we can apply the estimate \eqref{bil-2.1}  to obtain
	\begin{equation}\label{bil-3.1}
		\begin{split}
			\|fg\|_{L^2(\R^2)}&\lesssim \|f\|_{X^{1/\alpha}_{-\frac12, \frac12-2\epsilon}}\|g\|_{X_{s, \frac12+\epsilon}}\\
			&\lesssim_{\alpha} \|u\|_{X_{-\frac12, \frac12-2\epsilon}} \|v\|_{X^{\alpha}_{s, \frac12+\epsilon}}.
		\end{split}
	\end{equation}
	We concludes that \eqref{bil-1.1} holds for $\alpha>1$,  observing that $ \|fg\|_{L^2(\R^2)} =\alpha^{-2/3}\|u v\|_{L^2(\R^2)} $. 
	
	Now, if $\alpha<0$, using Plancherel's identity, one can see that the estimate \eqref{bil-1.1} is equivalent to 
	\begin{equation}\label{x3.13}
		\|B_s(f,g)\|_{L^2_\xi L^2_\tau}\leq C\|f\|_{L^2}\|g\|_{L^2},
	\end{equation}
	where
	\begin{equation}
		B_s(f,g)=\int \limits_{\R^2}\frac{\langle \xi_2 \rangle^{\frac{1}{2}} \tilde{f}(\xi_2,\tau_2) \tilde{g}(\xi_1,\tau_1)}{\langle\xi_1 \rangle^{s}\langle\tau_1-\alpha\xi_1^ 3\rangle ^{\frac{1}{2}+\epsilon}\langle \tau_2-\xi_2^ 3 \rangle^{\frac{1}{2}-2\epsilon} }d\xi_1 d\tau_1,\label{3.13}
	\end{equation}
	with $\xi_2=\xi-\xi_1$, $\tau_2=\tau-\tau_1.$
	Using Cauchy-Schwarz inequality we note \eqref{x3.13} holds if 
	\begin{equation}
		\mathcal{L}_1:=\sup\limits_{\xi,\tau}\int \limits_{\R^ 2} \frac{\langle \xi_2\rangle}{\langle\xi_1 \rangle^{2s}\langle\tau_1-\alpha\xi_1^ 3\rangle ^{1+2\epsilon}\langle \tau_2-\xi_2^ 3 \rangle^{1-4\epsilon}} d\xi_1 d\tau_1\lesssim 1.\label{3.14}
	\end{equation}
	In order to see that the estimate \eqref{3.14} hold,  applying the estimate \eqref{lfc1-1} (Lemma \ref{lemxm1}) for  the integral in $\tau_1$, we obtain 
	\begin{equation}
		\mathcal{L}_1\lesssim \sup  \limits_{\xi,\tau} 
		\int \limits_{\R} \frac{\langle \xi_2\rangle}{\langle\xi_1 \rangle^{2s}\langle \tau-\xi_2^ 3 -\alpha\xi_1^ 3\rangle^{1-4\epsilon}} d\xi_1,
	\end{equation}
	if $0<\epsilon<\frac{1}{4}$. Applying now \eqref{X1}, we get the desired bound and finish this case. In the same way we can prove the inequality \eqref{bil-2.1} for $\alpha<0$ or $\alpha>1$.
\end{proof}
The next results are usefull in the proof of the Proposition \ref{bil-LRX-1}.

\begin{lemma}\label{lema02}  Let $l\ge-1/2$ and $b>1/2$.  Considering $F$ a monotone function defined on a Lebesgue-measurable set $X\subset \R$ such that $$ |F'(\xi_1)|\gtrsim \max\{\xi_1^2,\,\xi_2^2\},\ \ \ \forall\xi_1\in X,$$ where $\xi_2=\xi-\xi_1$, for a fixed $\xi\in\R$. Then we have
	\begin{equation}\label{eqlema02}
		\int_{X}\frac{\langle \xi_i\rangle}{\langle \xi_j\rangle^{2l}\langle F(\xi_1)\rangle^{2b}}d\xi_1\lesssim \int_{X}\frac{1}{\langle F(\xi_1)\rangle^{2b}}d\xi_1+1,
	\end{equation}	 
	for all $i,\,j\in\{1,\,2\}$.
\end{lemma}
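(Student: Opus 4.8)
The plan is to reduce the numerator weight to something controlled by $F'$, and then exploit the monotonicity of $F$ through a change of variables. First I would use the hypothesis $l\ge-1/2$ to simplify the factor $\langle\xi_i\rangle/\langle\xi_j\rangle^{2l}$. Since $\langle\xi_j\rangle\ge 1$ and $-2l\le 1$, we have $\langle\xi_j\rangle^{-2l}\le\langle\xi_j\rangle$, so that
\[
\frac{\langle\xi_i\rangle}{\langle\xi_j\rangle^{2l}}\le \langle\xi_i\rangle\langle\xi_j\rangle\le \max\{\langle\xi_1\rangle,\langle\xi_2\rangle\}^2\lesssim 1+\max\{\xi_1^2,\xi_2^2\}
\]
uniformly in $i,j\in\{1,2\}$, the diagonal cases $i=j$ being the ones that genuinely use $l\ge-1/2$ to keep the exponent $1-2l$ at most $2$. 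Invoking the hypothesis $|F'(\xi_1)|\gtrsim\max\{\xi_1^2,\xi_2^2\}$ then gives the pointwise bound $\langle\xi_i\rangle/\langle\xi_j\rangle^{2l}\lesssim 1+|F'(\xi_1)|$ for every $\xi_1\in X$.

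Next I would insert this bound into the integral and split it according to the two summands:
\[
\int_{X}\frac{\langle\xi_i\rangle}{\langle\xi_j\rangle^{2l}\langle F(\xi_1)\rangle^{2b}}\,d\xi_1\lesssim \int_{X}\frac{d\xi_1}{\langle F(\xi_1)\rangle^{2b}}+\int_{X}\frac{|F'(\xi_1)|}{\langle F(\xi_1)\rangle^{2b}}\,d\xi_1.
\]
The first term is exactly the first term on the right-hand side of \eqref{eqlema02}, so nothing more is needed there. For the second term, since $F$ is monotone on $X$ (hence $F'$ keeps a constant sign there), I would perform the change of variables $x=F(\xi_1)$, $dx=F'(\xi_1)\,d\xi_1$, which maps $X$ injectively into $\R$ and yields
\[
\int_{X}\frac{|F'(\xi_1)|}{\langle F(\xi_1)\rangle^{2b}}\,d\xi_1\le \int_{\R}\frac{dx}{\langle x\rangle^{2b}}\lesssim 1,
\]
the last inequality holding because $2b>1$. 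Combining the two estimates gives \eqref{eqlema02}.

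The only delicate points are bookkeeping rather than substance: one must verify the elementary inequality $\langle\xi_i\rangle/\langle\xi_j\rangle^{2l}\lesssim 1+|F'(\xi_1)|$ in all four combinations of $i,j$, and check that the monotonicity hypothesis legitimately justifies the substitution $x=F(\xi_1)$ as a one-sided change of variables, so that the image $F(X)$ is covered at most once and the passage to $\int_{\R}$ is a genuine inequality. Neither step is a real obstacle; the hypotheses $l\ge-1/2$, $b>1/2$, and the lower bound on $|F'|$ are each used exactly once and precisely where expected.
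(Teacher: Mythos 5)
Your proof is correct and takes essentially the same route as the paper's: bound $\langle\xi_i\rangle/\langle\xi_j\rangle^{2l}\lesssim 1+|F'(\xi_1)|$ using $l\ge-1/2$ together with the hypothesis $|F'(\xi_1)|\gtrsim\max\{\xi_1^2,\xi_2^2\}$, split the integral into the two resulting pieces, and kill the $|F'|$ piece by the monotone change of variables $x=F(\xi_1)$ and $2b>1$. The only (immaterial) difference is bookkeeping: you discard the factor $\langle\xi_j\rangle^{-2l}\le\langle\xi_j\rangle$ before splitting, whereas the paper writes the integrand as $\langle\xi_i\rangle\langle\xi_j\rangle/\bigl(\langle\xi_j\rangle^{2l+1}\langle F(\xi_1)\rangle^{2b}\bigr)$ and drops $\langle\xi_j\rangle^{2l+1}\ge 1$ after splitting.
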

\begin{proof} Considering $F$ a monotone increasing function, we have that $F'(\xi_1)\gtrsim\xi_k^2$, for $k\in\{1,\,2\}$. So 
	\begin{align*}
		\langle \xi_i\rangle\langle \xi_j\rangle&\lesssim \langle \xi_i\rangle^2+\langle \xi_j\rangle^2\lesssim 1+F'(\xi_1),
	\end{align*}
	for all $i,\,j\in\{1,\,2\}$. Hence
	\begin{align*}
		\int_{X}\frac{\langle \xi_i\rangle}{\langle \xi_j\rangle^{2l}\langle F(\xi_1)\rangle^{2b}}d\xi_1&=\int_{X}\frac{\langle \xi_i\rangle\langle \xi_j\rangle}{\langle \xi_j\rangle^{2l+1}\langle F(\xi_1)\rangle^{2b}}d\xi_1\\
		&\lesssim \int\frac{1}{\langle \xi_j\rangle^{2l+1}\langle F(\xi_1)\rangle^{2b}}d\xi_1+\int_{X}\frac{F'(\xi_1)}{\langle \xi_j\rangle^{2l+1}\langle F(\xi_1)\rangle^{2b}}d\xi_1\\
		&\lesssim \int_{X}\frac{1}{\langle F(\xi_1)\rangle^{2b}}d\xi_1+\int_{X}\frac{F'(\xi_1)}{\langle F(\xi_1)\rangle^{2b}}d\xi_1
	\end{align*}
	and making the change of variable $x=F(\xi_1)$, we finish the proof.
\end{proof}
\begin{lemma}\label{lemma1.7}
	Let $b>1/2$, $l \geq -1/2$ and $\alpha\in (-\infty,\,0)\cup(1,\,+\infty)$, then
	\begin{equation}\label{est5}
		J:=\int_{B}\frac{\langle \xi_2\rangle}{\langle \xi_2\rangle^{2l}\langle H(\xi_1)\rangle^{2b}}d\xi_1\lesssim 1,
	\end{equation}
	where $B= \left\{  \xi_1;\, \ \   \langle \xi_2\rangle > \frac{1+\iota}{\iota}\langle \xi_1\rangle \right\}$, $\iota$ as defined in \eqref{Eqk}, $\xi_2=\xi-\xi_1$ and 
	\begin{equation}\label{defH}
		H(\xi_1):=\tau-\xi_1^ 3 -\alpha\xi_2^ 3.
	\end{equation}
\end{lemma}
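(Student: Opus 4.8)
The plan is to recognize $H$ as an instance of the monotone function $F$ in Lemma~\ref{lema02}, applied on the set $X=B$, and then to clear the residual integral by a change of variables. The role of the region $B$ (that is, of the parameter $\iota$ fixed in \eqref{Eqk}) is precisely to guarantee that $H'$ stays bounded away from zero on $B$, which is the point on which the whole argument turns.

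First I would compute the derivative of $H$. Since $\xi_2=\xi-\xi_1$ for fixed $\xi$, we have $d\xi_2/d\xi_1=-1$, so differentiating \eqref{defH} gives
\[
	H'(\xi_1)=3\bigl(\alpha\,\xi_2^2-\xi_1^2\bigr).
\]
The central step is then to verify, on $B$, that $H'$ has a fixed sign (so that $H$ is monotone there) and that $|H'(\xi_1)|\gtrsim\max\{\xi_1^2,\xi_2^2\}$. I would split according to the sign of $\alpha$. For $\alpha<0$ this is immediate and needs nothing from $B$: indeed $H'(\xi_1)=-3(|\alpha|\xi_2^2+\xi_1^2)<0$, so $H$ is strictly decreasing on all of $\R$ and $|H'(\xi_1)|\geq 3|\alpha|\xi_2^2\gtrsim\xi_2^2=\max\{\xi_1^2,\xi_2^2\}$, the last equality because $\langle\xi_2\rangle>\langle\xi_1\rangle$ on $B$. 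For $\alpha>1$ the defining inequality $\langle\xi_2\rangle>\frac{1+\iota}{\iota}\langle\xi_1\rangle$, together with the choice of $\iota$ in \eqref{Eqk} (arranged so that $\iota^2<\alpha$), yields $\xi_1^2<\iota^2\xi_2^2$, whence $H'(\xi_1)=3(\alpha\xi_2^2-\xi_1^2)>3(\alpha-\iota^2)\xi_2^2>0$; thus $H$ is strictly increasing on $B$ and again $|H'(\xi_1)|\gtrsim\xi_2^2=\max\{\xi_1^2,\xi_2^2\}$.

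With these two facts in hand I would apply Lemma~\ref{lema02} with $F=H$, $X=B$, and $i=j=2$, so that the numerator $\langle\xi_2\rangle$ and the weight $\langle\xi_2\rangle^{2l}$ in \eqref{est5} match the left-hand side of \eqref{eqlema02}. This gives
\[
	J\lesssim\int_{B}\frac{d\xi_1}{\langle H(\xi_1)\rangle^{2b}}+1.
\]
It remains to bound the surviving integral. On $B$ the defining inequality forces $\langle\xi_2\rangle>\frac{1+\iota}{\iota}$, hence $|\xi_2|>1/\iota$, and combined with the derivative bound above this gives $|H'(\xi_1)|\gtrsim\xi_2^2\gtrsim 1$ uniformly on $B$. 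Since $H$ is monotone on $B$, the change of variables $x=H(\xi_1)$ then yields
\[
	\int_{B}\frac{d\xi_1}{\langle H(\xi_1)\rangle^{2b}}\lesssim\int_{B}\frac{|H'(\xi_1)|}{\langle H(\xi_1)\rangle^{2b}}\,d\xi_1=\int_{H(B)}\frac{dx}{\langle x\rangle^{2b}}\lesssim\int_{\R}\frac{dx}{\langle x\rangle^{2b}}\lesssim 1,
\]
because $2b>1$. Combining the last two displays gives $J\lesssim 1$.

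I expect the main obstacle to be the second step, namely pinning down the sign and the lower bound of $H'$ on $B$ in the case $\alpha>1$: unlike the case $\alpha<0$, here $H'$ would vanish along $|\xi_1|=\sqrt{\alpha}\,|\xi_2|$, and it is only the restriction to $B$ --- through the quantitative choice of $\iota$ in \eqref{Eqk} --- that keeps us away from this locus and makes $|H'|$ comparable to $\xi_2^2$. It may also be reassuring to note that $B$ is in fact a bounded $\xi_1$-interval for each fixed $\xi$, so there is no integrability issue at infinity, although the change of variables above does not rely on this.
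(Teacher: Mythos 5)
Your proof is correct, and it reaches the conclusion by a more direct route than the paper's. The paper does not read the derivative bound off the definition of $B$: it first disposes of $\alpha<0$ (as you do), and then for $\alpha>1$ it splits into $|\xi|\le 1$ and $|\xi|>1$; for $|\xi|>1$ it introduces the auxiliary set $X=\{\xi_1:|\xi_1|<\iota|\xi|\}$, checks that $B\cap(\R\setminus X)=\varnothing$, and proves $H'(\xi_1)\ge\lambda\alpha\xi^2$ on $X$ via the quadratic analysis \eqref{est7}--\eqref{Eqk} (which is precisely how $\iota$ is constructed), finishing with Lemma \ref{lema02} and Lemma \ref{lemxm1}(iv); the remaining case $|\xi|\le 1$ is handled separately, by rescaling $H$ by $1/\alpha$ and invoking Lemma \ref{lemma1.5}. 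You instead observe that the defining inequality of $B$ already yields the pointwise bound $|\xi_2|>\tfrac1\iota+\tfrac{1+\iota}{\iota}|\xi_1|$, hence $|\xi_1|<\iota|\xi_2|$ and $H'=3(\alpha\xi_2^2-\xi_1^2)\ge 3(\alpha-\iota^2)\xi_2^2>0$ on $B$; this verifies the hypotheses of Lemma \ref{lema02} on $B$ itself, uniformly in $\xi$, so the case distinction on $|\xi|$ and the appeal to Lemma \ref{lemma1.5} disappear, and your replacement of Lemma \ref{lemxm1}(iv) by the change of variables $x=H(\xi_1)$ is legitimate because $|H'|\gtrsim_{\alpha}\xi_2^2>\iota^{-2}$ on $B$ and $H$ is injective there ($B$ is an interval; in any case a cubic attains each value at most three times, so at worst a harmless factor appears). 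What each approach buys: the paper's proof carries the provenance of $\iota$ inside the argument, which is natural since \eqref{Eqk} is derived there, whereas yours isolates the only property of $\iota$ the lemma actually needs, namely $\iota\in(0,1)$, and is shorter and uniform in $\xi$. Two cosmetic remarks: the condition $\iota^2<\alpha$ is not ``arranged'' by \eqref{Eqk} --- it is automatic from $\iota\in(0,1)$ and $\alpha>1$; and for $\alpha<0$ you do not need $B$ at all for the derivative domination, since $|H'|=3(|\alpha|\xi_2^2+\xi_1^2)\gtrsim_{\alpha}\max\{\xi_1^2,\xi_2^2\}$ everywhere.
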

\begin{proof}
	Of course, \eqref{est5} holds when $\alpha<0$ (see the proof of Lemma \ref{lema02}). Thus we will suppose $\alpha>1$. First we will consider $|\xi|>1$ and let
	$$X=\bigl\{\xi_1;\, \ \ |\xi_1|<\iota|\xi|\bigr\},$$
	where $\iota\in (0,\,1)$ will be chosen later, we have
	$$J\leq \underbrace{\int_{B}\frac{\langle \xi_2\rangle}{\langle \xi_2\rangle^{2l}\langle H(\xi_1)\rangle^{2b}}\chi_{_{X}}(\xi_1)d\xi_1}_{J_{1}}+\underbrace{\int_B\frac{\langle \xi_2\rangle}{\langle \xi_2\rangle^{2l}\langle H(\xi_1)\rangle^{2b}}\chi_{_{\R\backslash X}}(\xi_1)d\xi_1}_{J_{2}}.$$
	Starting with $J_{2}$, noting that in $\R\backslash X=\bigl\{\xi_1;\, |\xi_1|\ge \iota|\xi|> \iota\bigr\}$, we have $\langle \xi_2\rangle \leq \frac{1+\iota}{\iota}\langle \xi_1\rangle$, then $B\cap (\R\backslash X)=\varnothing$ and $J_{2}\equiv0$.
	Now, for $J_{1}$, we have
	\begin{equation}\label{est6}
		\langle \xi_1\rangle\lesssim \langle \xi_2\rangle\lesssim \langle\xi\rangle.
	\end{equation}
	We choose $\lambda\in(0,\,3)$ such that 
	\begin{equation}\label{est7}
		H'(\xi_1)=3(\alpha-1)\xi_1^2-6\alpha\xi_1\xi+3\alpha\xi^2\ge \lambda\alpha\xi^2.
	\end{equation}
	So, we have that $H$ is increasing in $X$ and $|H'(\xi_1)|\gtrsim\xi^2\gtrsim_{\alpha,\lambda,c_2} \max\{\xi_1^2,\,\xi_2^2\}$, for all $\xi_1\in X$. Thus, collecting this facts and using Lemma \ref{lema02} and Lemma \ref{lemxm1} item (iv), we get the desired. We determine $\iota\in(0,\,1)$ and $\lambda\in(0,\,3)$ such that \eqref{est7} is valid. If fact, the inequality \eqref{est7} is equivalent to
	\begin{equation}\label{est8}
		\underbrace{3(1-\alpha)\xi_1^2+6\alpha\xi\xi_1}_{\text{quadratic funtion}}\leq \underbrace{(3-\lambda)\alpha\xi^2}_{\text{constant function}}.
	\end{equation}
	So we choose $\iota$ such that the equality  \eqref{est8} is true in the interval $|\xi_1|\leq \iota|\xi|$. Thus, $\iota$ is a root of the quadractic equation
	$$3(1-\alpha)\iota^2\pm 6\alpha \iota+(\lambda-3)\alpha=0.$$
	One can see that 
	\begin{equation}\label{Eqk}
		\iota=(6\alpha-\sqrt{36\alpha^2-12\alpha(1-\alpha)(\lambda-3)} )/[6(\alpha-1)]
	\end{equation}
	belongs to $(0,\,1)$ if $\lambda\in(0,\,3)$.
	
	Now if $|\xi |\leq 1$,  we have
	$$
	\frac12\langle \xi_2 \rangle \leq \langle \xi_1 \rangle \leq 2\langle \xi_2 \rangle
	$$
	and using  Lemma \ref{lemma1.5} we get
	\begin{equation}
		J \leq\int_{\R}\frac{\langle \xi_2\rangle}{\langle \xi_2\rangle^{2l}\langle \tau-\xi_1^ 3 -\alpha\xi_2^ 3\rangle^{2b}}d\xi_1\\
		\lesssim\int_{\R}\frac{\langle \xi_2\rangle}{\langle \xi_1\rangle^{2l}\langle \frac1{\alpha}\tau-\frac1{\alpha}\xi_1^ 3 -\xi_2^ 3\rangle^{2b}}d\xi_1\\
		\lesssim 1.
	\end{equation}
	
\end{proof}
With these Lemmas in hands, we can prove Proposition \ref{bil-LRX-1} and therefore the trilinear estimates in Proposition \ref{prop1}.

\begin{proof}[Proof of the Proposition \ref{bil-LRX-1}]	We only provide a detailed proof of (a), because (b) is analogous. Suppose that $s, k>-\frac12$ and $s-k\leq 1/2$. As before, using Plancherel identity and Cauchy-Schwarz inequality we need to estimate
	\begin{equation}\label{est1}
		\int_{\R^2}\dfrac{\langle  \xi_2 \rangle^{2(s-k)+1}}{\langle \tau_2-\alpha \xi_2^3\rangle^{2b} \langle \xi_1\rangle^{2s} \langle \tau_1-\xi_1^3\rangle^{2b}} d\xi_1 d\tau_1,
	\end{equation}
	where $\xi_2=\xi-\xi_1$, $\tau_2=\tau-\tau_1$, for $\tau$ and $\xi$ fixed and $b=1/2+\epsilon$. Integrating in $\tau_1$ and applying \eqref{lfc1-1}, we need to estimate the following integral
	%
	\begin{equation}
		\mathcal{L}_2=
		\int \limits_{\R} \frac{\langle \xi_2\rangle^{1+2(s-k)}}{\langle\xi_1 \rangle^{2s}\langle H(\xi_1)\rangle^{2b}} d\xi_1,
	\end{equation}
	where $H(\xi_1)$ is given by \eqref{defH}.
	Our goal is to prove $\mathcal{L}_2\lesssim 1$. First we consider the case $s-k\leq 0$, then
	\begin{align}
		\mathcal{L}_2&\lesssim \int_{\R}\frac{\langle \xi_2\rangle}{\langle \xi_1\rangle^{2s}\langle H(\xi_1)\rangle^{2b}}d\xi_1\label{est2}.
	\end{align}
	If $\alpha>1$, we write $\frac{1}{\alpha}H(\xi_1)=\tilde{\tau}-\xi_2^3-\beta\xi_1^3$, where $\tilde{\tau}=\tau/\alpha$ and $\beta=1/\alpha$. Applying \eqref{X1}, with $\beta\in(0,\,1)$ in place of $\alpha$, we get $\mathcal{L}_2\lesssim 1$, remembering that $2b>1-4\epsilon$. On the other hand, if $\alpha<0$ we can see that $H'(\xi_1)=3\alpha\xi_2^2-3\xi_1^2<0$ and $|H'(\xi_1)|\gtrsim_{\alpha}\max\{\xi_1^2,\,\xi_2^2\}$. So, combining Lemma \ref{lema02} and Lemma \ref{lemxm1} item (iv), we get the same bound.
	
	Considering now $0< s-k \leq 1/2$, let
	\begin{equation}\label{setAB}A=\{\xi_1;\, \langle \xi_2\rangle\lesssim \langle \xi_1\rangle\}\ \ \ \ \ \text{and}\ \ \ \ \ B=\{\xi_1;\, \langle \xi_1\rangle\lesssim \langle \xi_2\rangle\}.\end{equation}
	Thus
	$$\mathcal{L}_2\leq \underbrace{\int \limits_{\R} \frac{\langle \xi_2\rangle^{1+2(s-k)}}{\langle\xi_1 \rangle^{2s}\langle H(\xi_1)\rangle^{2b}}\chi_{_{A}}(\xi_1) d\xi_1}_{\mathcal{L}_2^{A}}+\underbrace{\int \limits_{\R} \frac{\langle \xi_2\rangle^{1+2(s-k)}}{\langle\xi_1 \rangle^{2s}\langle H(\xi_1)\rangle^{2b}}\chi_{_{B}}(\xi_1) d\xi_1}_{\mathcal{L}_2^{B}}.$$
	For the first integral, we have 
	$$\mathcal{L}_2^{A}\lesssim \int_{\R}\frac{\langle \xi_2\rangle}{\langle \xi_1\rangle^{2k}\langle H(\xi_1)\rangle^{2b}}d\xi_1,$$
	and this integral  has already been estimated in \eqref{est2}. For the second integral $\mathcal{L}_2^{B}$, similarly as above one can see that if $s\leq 0$,  using Lemma \ref{lemma1.7} we have
	\begin{equation}\label{est3}
		\mathcal{L}_2^{B}\lesssim \int_{B}\frac{\langle \xi_2\rangle}{\langle \xi_2\rangle^{2k}\langle H(\xi_1)\rangle^{2b}}d\xi_1 \lesssim 1.
	\end{equation}
	On the other hand, if $s>0$  again using Lemma \ref{lemma1.7} we have
	\begin{equation}\label{est4}
		\mathcal{L}_2^{B}\lesssim 	\int_{B}\frac{\langle \xi_2\rangle}{\langle \xi_2\rangle^{2(k-s)}\langle H(\xi_1)\rangle^{2b}}d\xi_1\lesssim 1,
	\end{equation}
	since $k-s \geq -1/2$.   
	\noindent
	Now, we prove the same estimate in the range: $s=-1/2$ and $-1/2< k$. Indeed, we need to prove that
	\begin{equation*}
		\|uv\|_{L^2(\R^2)}\lesssim \|u\|_{X^{\alpha}_{k, \frac12+\epsilon}}\|v\|_{X_{-\frac12, \frac12+\epsilon}}.
	\end{equation*}
	This  estimate follows from the estimate \eqref{bil-1.1}, noting that $1/2-2\epsilon<1/2+\epsilon$. 
	
\end{proof}
%
%
%
%
%


\secao{Local well-posedness result: proof of Theorem \ref{loc-sys}}

In view of the previous sections, we are in position to prove the Local well-posedness result given in Theorem \ref{loc-sys}. Here we understand that a solution of the system \eqref{ivp-sy} is in fact a solution of the associated integral equations \eqref{eqint1} and \eqref{eqint2}. We use standard arguments, so we give the proof for the sake of completeness. 
\begin{proof}[Proof of Theorem \ref{loc-sys}]
	Let $s,k\in\R$ such that $s,k>-1/2$ and $|s-k|<1/2$. Let $b=1/2+\epsilon$ and $b'=-1/2+2\epsilon$, with $\epsilon>0$. Let fix $\alpha\in(-\infty,0)\cup(1,\infty)$ (see Remark \ref{obsalpha}). For $a>0$, to be choosen later,  let
	$$B_a=\left\{(v,w)\in X_{s,b}\times X^{\alpha}_{k,b}\,;\,  \|(v,w)\|_{X_{s,b}\times X^{\alpha}_{k,b}}=\|v\|_{X_{s,b}}+\|w\|_{X^{\alpha}_{k,b}}<a  \right\},$$
	a complete metric spaces. For $\delta>0$ (choosen later), we define the map $\mathfrak{F}=\mathfrak{F}_{\delta}:B_a\to X_{s,b}\times X^{\alpha}_{k,b}$ such that $\mathfrak{F}(v,w)=(\mathfrak{F}_1(v,w),\,\mathfrak{F}_2(v,w))$ where
	\begin{align*}
		\mathfrak{F}_1(v,w)&=\eta(t)U(t)\phi-\eta_{\delta}(t)\int_0^tU(t-t')\partial_x(vw^2)(t')dt'\\
		\mathfrak{F}_2(v,w)&=\eta(t)U^{\alpha}(t)\phi-\eta_{\delta}(t)\int_0^tU^{\alpha}(t-t')\partial_x(v^2w)(t')dt'.	
	\end{align*}
	From the linear estimates in Lemma \ref{lemalin} and the trilinear estimates in Proposition \ref{prop1}, for all $(v,w)\in B_a$ we get 
	\begin{align*}\|\mathfrak{F}(v,w)\|_{X_{s,b}\times X^{\alpha}_{k,b}}&\leq C\|(\phi,\psi)\|_{H^s\times H^k}+ C\delta^{1-b+b'}\left\{  \|v\|_{X_{s,b}}\|w\|^2_{X^{\alpha}_{k,b}} + \|v\|_{X_{s,b}}^2\|w\|_{X^{\alpha}_{k,b}}\right\}   \\
		&\leq C\|(\phi,\psi)\|_{H^s\times H^k}+ 2C\delta^{\epsilon}a^3.
	\end{align*}
	Now, taking $a=2C\|(\phi,\psi)\|_{H^s\times H^k}>0$ and $\delta>0$ such that $2C\delta^{\epsilon}<1/2$ we conclude that
	$$\|\mathfrak{F}(v,w)\|_{X_{s,b}\times X^{\alpha}_{k,b}}\leq a, \ \ \ \textrm{for all}\ \ (v,w)\in B_a,$$
	i.e., for $a>0$ and $\delta>0$ as before, $\mathfrak{F}_{\delta}(B_a)\subset B_a$. 
	Also, with a similar arguments and taking $\delta>0$ smaller, if necessary, we can conclude that 
	$$\|\mathfrak{F}(v,w)-\mathfrak{F}(\tilde{v},\tilde{w})\|_{X_{s,b}\times X^{\alpha}_{k,b}}\leq \|(v,w)-(\tilde{v},\tilde{w})\|_{X_{s,b}\times X^{\alpha}_{k,b}},$$
	i.e., $\mathfrak{F}:B_a\to B_a$ is a contraction an has a unique fixed point, establishing  a unique solution $(v,w)$ satisfying \eqref{eqint1} and \eqref{eqint2} for every $t\in[-\delta,\,\delta]$. Because $b>1/2$,  then we have the persistence property $(v,w)\in C([-\delta,\delta]:H^s)\times C([-\delta,\delta]:H^k)$ and also, following a similar arguments, we can conclude that the solution mapping is locally lipchitz from $H^s\times H^k$ into $C([-\delta,\delta]:H^s)\times C([-\delta,\delta]:H^k)$. The uniqueness in the class $X_{s,b}^{\delta}\times X^{\alpha,\delta}_{k,b}$ can be proved with standard arguments (see e.g. \cite{BOP1997} or \cite{Domingues2016}).
\end{proof}
%
%
%
%
%
%

\secao{Ill-posedness results}\label{ill}

In this section we prove some ill-posedness results related to the system \eqref{ivp-sy}.

\subsection{The solution mapping is not $C^3$}
\ \\
\vspace{-0.5cm}

Here we will prove the Theorem \ref{mainTh-ill1.1}. The proof given here follow the structure of the proofs in \protect\cite{Domingues2016} and \protect\cite{DS2019} (see also \protect\cite{Tzvetkov1999}). It is well known that if the LWP results in $H^s(\R)\times H^k(\R) $ for \eqref{ivp-sy} is obtained by means of contraction method, then for a fixed $r>0$ there is a $T=T(r,s,k)>0$ such that the solution mapping 
\begin{eqnarray}
	\label{datasol}
	S\quad\!\! :\quad 
	B_r 
	\quad 
	&\longrightarrow& 
	\quad C
	\bigl(\, [0,T]\ ;\ 
	H^{s} 
	\,\bigr) \times C
	\bigl(\, [0,T]\ ;\ 
	H^{k}\,\bigr 
	)
	\\ 
	\ \ \ \ \ \ (\phi,\psi)\!\! 
	&\mapsto& 
	S_{(\phi,\,\psi)}=(v_{\!_{(\phi,\psi)}}\,,\,w_{\!_{(\phi,\psi)}} )\,,\nonumber 
\end{eqnarray}
is analitic (see Theorem 3 in \protect\cite{BT2006}), where $B_r$ is the $r$-ball centered at the origin of $H^s(\R)\times H^k(\R)$ and $v=v_{\!_{(\phi,\psi)}}$ and $w=w_{\!_{(\phi,\psi)}}$ satisfies, respectively, the integral equations \eqref{eqint1} and \eqref{eqint2} for initial data $\phi$ and $\psi$, in the time interval $[0,\,T]$. In this way, if we show that for a certain indices $(s,\,k)$ the solution mapping is not three times differentiable at the origin $(0,\,0)$ for all $T>0$ fixed,  the contraction method can not be applied to get LWP for these indices $(s,\,k)$.

Fixing $t\in[0,\,T]$, 
we define the \textit{flow mapping} associated to the system \eqref{ivp-sy} the  map 
\begin{eqnarray}
	\label{fluxo}
	S^{\,t}\!\quad\!\! :\quad 
	B_r
	\quad 
	&\longrightarrow& 
	\ \ \,H^s(\R)\times H^k(\R)
	\\ 
	\ \ \ \ \ \ (\phi,\psi)\!\! 
	&\mapsto& 
	S^t_{(\phi,\,\psi)}=S_{(\phi,\,\psi)}(t)=\bigl(v_{\!_{(\phi,\psi)}}(t)\,,\,w_{\!_{(\phi,\psi)}}(t)\bigr).\nonumber 
\end{eqnarray}

The Theorem \ref{mainTh-ill1.1} follows from the next proposition:
\begin{proposition}\label{ill-pos} Assume that the system \eqref{ivp-sy} is locally well-posed in the time interval $[0,\,T]$. Then we have 
	\begin{enumerate}
		\item[(a)] The solution mapping \eqref{datasol} is not 3-times Fréchet differentiable at the origin in $H^s(\R)\times H^k(\R)$ if $|s-k|>2$.
		
		\item[(b)] The flow mapping \eqref{fluxo} is not 3-times Fréchet differentiable at the origin in $H^s(\R)\times H^k(\R)$ if $s<-1/2$ or $k<-1/2$.
	\end{enumerate}
	
\end{proposition}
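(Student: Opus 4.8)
The plan is to use the fact, recalled above via \cite{BT2006}, that a local well-posedness established by a contraction argument forces the data-to-solution map to be real-analytic, hence $C^3$; so it suffices to locate, in each region of indices in the statement, a direction along which the third Fréchet derivative at the origin is unbounded. First I would make this derivative explicit. Inserting rescaled data $(\delta\phi,\delta\psi)$ into \eqref{eqint1}--\eqref{eqint2} and expanding $v^{\delta}=\sum_{j\ge1}\delta^{j}v_{j}$, $w^{\delta}=\sum_{j\ge1}\delta^{j}w_{j}$, one finds $v_{1}=U(t)\phi$, $w_{1}=U^{\alpha}(t)\psi$, the quadratic terms $v_{2},w_{2}$ vanish because the nonlinearities are cubic, and the cubic terms are the trilinear Duhamel expressions
\begin{align*}
	v_{3}(t)&=-\int_{0}^{t}U(t-t')\partial_{x}\big(U(t')\phi\,(U^{\alpha}(t')\psi)^{2}\big)\,dt',\\
	w_{3}(t)&=-\int_{0}^{t}U^{\alpha}(t-t')\partial_{x}\big((U(t')\phi)^{2}\,U^{\alpha}(t')\psi\big)\,dt'.
\end{align*}
Thus $D^{3}S(0)$ (respectively $D^{3}S^{t}(0)$) is, up to the factor $3!$, the symmetric trilinear map whose diagonal is $(v_{3},w_{3})$, and the whole proposition reduces to showing that $(\phi,\psi)\mapsto(v_{3},w_{3})$ fails to be bounded from $H^{s}\times H^{k}$ into $C([0,T];H^{s})\times C([0,T];H^{k})$ in case (a), and into $H^{s}\times H^{k}$ at a fixed $t$ in case (b).

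All counterexamples are frequency-localized data whose Fourier transforms are normalized characteristic functions of unit intervals placed at high frequencies $\pm N$. Taking the space-time Fourier transform of $v_{3}$ or $w_{3}$ and performing the $t'$-integration produces the factor $\int_{0}^{t}e^{-it'\Omega}\,dt'=(1-e^{-it\Omega})/(i\Omega)$, where $\Omega$ is the resonance function obtained from the three input phases and the output phase; for $w_{3}$, with the two $v$-inputs at frequencies $\xi_{1},\xi_{2}$ and the $w$-input at $\xi_{3}$, one has $\Omega=\alpha\xi^{3}-\xi_{1}^{3}-\xi_{2}^{3}-\alpha\xi_{3}^{3}$ with $\xi=\xi_{1}+\xi_{2}+\xi_{3}$. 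For part (b) with $s<-1/2$ I would use $w_{3}$ and place $\widehat{\phi}_{N}$ near $\pm N$ and $\widehat{\psi}_{N}$ near $N$, so that on the interaction $\xi_{1}\approx N,\ \xi_{2}\approx-N,\ \xi_{3}\approx N$ the leading terms give $\Omega\sim(\alpha-1)N^{2}$; here the hypothesis $\alpha\neq1$ is essential, since it keeps $\Omega$ of size $N^{2}$ (at $\alpha=1$ the quadratic term cancels and $\Omega$ is only of size $N$, which is precisely why the threshold is $1/4$ rather than $-1/2$ in that case). Over the bulk of the interaction the Duhamel factor is then $\sim|\Omega|^{-1}\sim N^{-2}$, and combining this with the derivative gain $\partial_{x}\sim N$, with the normalized amplitudes, and with the output weight $\langle\xi\rangle^{k}\sim N^{k}$, one obtains
\begin{equation*}
	\frac{\|w_{3}\|_{H^{k}}}{\|\phi\|_{H^{s}}^{2}\|\psi\|_{H^{k}}}\ \gtrsim\ N^{-2s-1},
\end{equation*}
which tends to $\infty$ as $N\to\infty$ exactly when $s<-1/2$. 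The case $k<-1/2$ is identical after interchanging the roles of the two equations, using $v_{3}$ with $\widehat{\psi}_{N}$ placed near $\pm N$, and yields the exponent $N^{-2k-1}$.

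For part (a) with $|s-k|>2$, which by the same interchange of the two equations (mapping $\alpha$ to $1/\alpha$ as in Remark \ref{obsalpha}) I may assume to be $s-k>2$, the mechanism is regularity transfer rather than resonance: I would feed the two factors living in the smoother space at high frequency and arrange the third factor and the output frequency so that the target weight is as unfavorable as possible against the input weights. Tracking the same three ingredients (Duhamel factor, derivative gain, Sobolev weights) one is led to an exponent of $N$ that is strictly positive precisely in the range $s-k>2$, giving the unboundedness. Since here we only need the solution-map norm over $[0,T]$, it is enough to evaluate the construction at one convenient time $t\le T$.

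The main obstacle, and the place where the real estimates must be done, is the quantitative lower bound for $\widehat{v_{3}},\widehat{w_{3}}$: one must check that the genuinely \emph{non-resonant} bulk, where the Duhamel factor is $\sim|\Omega|^{-1}$, dominates the small resonant region where that factor is only $O(t)$, and one must correctly compute the measure of the set of frequencies $(\xi_{1},\xi_{2},\xi_{3})$ contributing to a fixed output $\xi$ together with the Jacobian of this parametrization. A positivity (no-cancellation) argument is also needed so that the contributions from the chosen interval add coherently rather than cancel. It is exactly the bookkeeping of these competing contributions, and the size $N^{2}$ of $\Omega$ afforded by $\alpha\neq1$, that pins down the sharp thresholds $|s-k|=2$ and $s=-1/2$ (respectively $k=-1/2$); the stronger fixed-time statement in part (b) then follows at once because the blow-up is produced at each fixed $t>0$.
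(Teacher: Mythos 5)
You set up the problem exactly as the paper does --- the analyticity result of \cite{BT2006}, the identification of the third derivative at the origin with the cubic Duhamel terms, frequency-localized data fed into a convolution lower bound of the type of Lemma \ref{lemaelem} --- and your exponent counts ($N^{-2s-1}$, $N^{-2k-1}$, the role of $\alpha\neq1$) are correct. The problem is that in both parts the step you postpone is the actual proof. In part (b), with $\xi_1\approx N$, $\xi_2\approx-N$, $\xi_3\approx N$ and unit-width intervals, the resonance $\Omega\approx 3(\alpha-1)N^2(a+b)$ vanishes at the centers and changes sign across the interaction region, so the Duhamel factor $(1-e^{-it\Omega})/(i\Omega)=t\,e^{-it\Omega/2}\operatorname{sinc}(t\Omega/2)$ oscillates and changes sign there. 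Rescaling $v=t\Omega/2$, one sees that the coherent contribution comes from the resonant zone $|\Omega|\lesssim 1/t$ (measure $\sim(tN^2)^{-1}$ in $a+b$, factor $\sim t$), while the non-resonant bulk you propose to rely on contributes an oscillatory integral of the \emph{same} order $N^{-2}$ whose net contribution must be shown not to cancel the resonant one. So your guiding heuristic (``the bulk dominates the small resonant region'') is inverted, and the no-cancellation statement you flag as the ``main obstacle'' is precisely the heart of the matter, not a technicality. The paper avoids this issue structurally: it centers the three frequencies at $aN,bN,cN$ on the resonance surface (the constraints \eqref{constcond01}) and takes widths $\varepsilon\langle t\rangle^{-1}N^{-2}$, so that $|Q_\alpha|\lesssim\varepsilon$ and $\cos(t'Q_\alpha)>1/2$ pointwise; Lemma \ref{lemaelem} then yields the lower bound with nothing left to cancel.

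Part (a) has a larger gap: you exhibit no frequency configuration and no computation, and the one quantitative remark you make --- ``it is enough to evaluate the construction at one convenient time $t\le T$'' --- misses the essential mechanism. The configuration producing the unbalanced weight $N^{s-k+1}$ (two inputs at $O(1)$ frequencies, the third input and the output near $N$) forces $Q_\alpha\approx(1-\alpha)N^3$, which cannot be tuned small since $\alpha\neq1$; coherence of $\int_0^t\cos(t'Q_\alpha)\,dt'$ therefore requires a sequence of times $t_N\sim N^{-3}\to 0$ (the paper takes $t_N=\tfrac{T}{2N^3(1+T)}$), and the threshold $2$ arises exactly from $N^{s-k+1}\,t_N\lesssim 1$. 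At a time independent of $N$, the oscillation of $t'Q_\alpha$ over the region (or, if you shrink the intervals to width $N^{-2}$ to restore coherence, the loss of measure in Lemma \ref{lemaelem}) degrades the same scheme to the weaker threshold $s-k>4$. This $N$-dependent choice of time is also the reason part (a) is a statement about the solution mapping \eqref{datasol}, where one may take the supremum over $t\in[0,T]$, and not about the flow mapping \eqref{fluxo} at fixed $t$; your sketch does not engage with this distinction, which is where the sharp constant $2$ actually comes from.
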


\begin{remark} We point out that the result in (b) implies that the solution mapping is not 3-times differentiable at $(0,\,0)$ for the same indices $s$ and $k$. For a more detailed discussion we refer Remarks 1.4 and 1.5 in \cite{DS2019}.
\end{remark}
Before we start to prove our results, we need to do some calculations.
If $S^t$ is 3-times Fréchet differentiable at the origin in $H^s(\R)\times H^k(\R)$, then its third derivative $D^3S^t_{(0,\,0)}$ belongs to $\mathcal{B_1}$, the normed space of bounded trilinear applications from $(H^s\times H^k)\times (H^s\times H^k)\times (H^s\times H^k)$ to $H^s\times H^k$ and 
we have the following estimate for the third Gâteaux derivative of $S^{\,t}\!$ 
\begin{align}
	\left\| \frac{\partial^3 S^{\,t}_{(0,0)}}{\partial\Phi_0\partial\Phi_1\partial\Phi_2}\right\|_{_{H^{s}\times H^k}}\!\! 
	&=
	\left\| D^3S^{\,t}_{(0,0)}(\Phi_0,\Phi_1,\Phi_2)\right\|_{_{H^{k}\times H^l}}\nonumber\\
	&\leq\! 
	\left\|  D^3S^{\,t}_{(0,0)}  \right\|_{_{\mathcal B_1}} \|\Phi_0\|_{_{H^{k}\times H^l}}\|\Phi_1\|_{_{H^{k}\times H^l}}\|\Phi_2\|_{_{H^{k}\times H^l}}\,,
	\ \forall\Phi_0,\Phi_1,\Phi_2\in H^{k}\times H^l.\label{ill01}
\end{align}
Also, if $S$ is 3-times Fréchet differentiable at the origin, we have a similar estimate:
\begin{align}
	\sup_{t\in [0,\,T]}\left\| \frac{\partial^3 S^{\,t}_{(0,0)}}{\partial\Phi_0\partial\Phi_1\partial\Phi_2}\right\|_{_{H^{s}\times H^k}}\!\! 
	&=
	\left\| D^3S_{(0,0)}(\Phi_0,\Phi_1,\Phi_2)\right\|_{_{H^{k}\times H^l}}\nonumber\\
	&\leq\!\! 
	\left\|  D^3S_{(0,0)}  \right\|_{_{\mathcal B_2}}\!\! \|\Phi_0\|_{_{H^{k}\times H^l}}\!\|\Phi_1\|_{_{H^{k}\times H^l}}\!\|\Phi_2\|_{_{H^{k}\times H^l}}\,,
	\forall\Phi_0,\Phi_1,\Phi_2\in H^{k}\!\times\! H^l,\label{ill02}
\end{align}
where $\mathcal{B_2}$ is the normed space of bounded trilinear applications from $(H^s\times H^k)\times (H^s\times H^k)\times (H^s\times H^k)$ to $C(\, [0,T]\ ; 
H^{s} \,) \times C(\, [0,T] ;\ H^{k}\,)$. For $\Phi_k=(\phi_k,\,\psi_k)\in \mathcal{S}(\R)\times\mathcal{S}(\R)$, $k=0,1,2$, we can calculate the third Gâteaux derivative of each component of $S^t$:
\begin{align*}
	\frac{\partial^3 v_{(0,0)}}{\partial\Phi_0\partial\Phi_1\partial\Phi_2}&=-2\int_0^tU(t-t')\partial_x\bigl\{U(t')\phi_0U^{\alpha}(t')\psi_1U^{\alpha}(t')\psi_2 + U(t')\phi_1U^{\alpha}(t')\psi_2U^{\alpha}(t')\psi_0 \\
	& \ \ \ \ + U(t')\phi_2U^{\alpha}(t')\psi_1U^{\alpha}(t')\psi_0\bigr\}dt'
\end{align*}
and
\begin{align*}
	\frac{\partial^3 w_{(0,0)}}{\partial\Phi_0\partial\Phi_1\partial\Phi_2}&=-2\int_0^tU^{\alpha}(t-t')\partial_x\bigl\{U(t')\phi_0U(t')\phi_1U^{\alpha}(t')\psi_2 + U(t')\phi_0U(t')\phi_2U^{\alpha}(t')\psi_1 \\
	& \ \ \ \ + U(t')\phi_1U(t')\phi_2U^{\alpha}(t')\psi_0\bigr\}dt.
\end{align*}
So, for directions $\Phi_0=(\phi_0,\,0)$, $\Phi_1=(0,\,\psi_1)$ and $\Phi_2=(0,\,\psi_2)$ in $\mathcal{S}(\R)\times \mathcal{S}(\R)$ we get
\begin{align}
	\frac{\partial^3 v_{(0,0)}}{\partial\Phi_0\partial\Phi_1\partial\Phi_2}&=-2\int_0^tU(t-t')\partial_x\Bigl\{U(t')\phi_0 U^{\alpha}(t')\psi_1U^{\alpha}(t')\psi_2 \Bigr\}dt'\label{ill03}
\end{align}
and for directions $\Phi_0=(\phi_0,\,0)$, $\Phi_1=(\phi_1,\,0)$ and $\Phi_2=(0,\,\psi_2)$
\begin{align}
	\frac{\partial^3 w_{(0,0)}}{\partial\Phi_0\partial\Phi_1\partial\Phi_2}&=-2\int_0^tU^{\alpha}(t-t')\partial_x\Bigl\{U(t')\phi_0U(t')\phi_1U^{\alpha}(t')\psi_2 \Bigr\}dt'.\label{ill04}
\end{align}
\vspace{0.3cm}

With these in hands we also need a elementary result, proved  in \protect\cite{Domingues2016}:

\begin{lemma}\label{lemleandro} Let $A$, $B$, $R$ Lebesgue-measurable subsets of $\R^n$ such that\footnote{Here $X-Y=\{x-y;\, x\in X \ \textrm{and}\ y\in Y\}$.} $R-B\subset A$. Then\footnote{$|X|$ denotes de Lebesgue measure of the set $X$}
	$$\|\chi_{_{A}}\ast \chi_{_{B}}\|_{L^2(\R)}\gtrsim |B|  |R|^{1/2}.$$
\end{lemma}
The following lemma, which is a version of the elementary lemma above, plays a central role in the proof of the Theorem \ref{ill-pos}.
\begin{lemma}\label{lemaelem} Let $A$, $B$, $C$, $R$ Lebesgue-measurable  subsets of $\R^n$ such that $R-B-C\subset A$. Then
	$$\|\chi_{_{A}}\ast \chi_{_{B}}\ast \chi_{_{C}}\|_{L^2(\R)}\gtrsim |B||C||R|^{1/2}.$$
\end{lemma}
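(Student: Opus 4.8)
The plan is to reproduce the argument behind the two-fold convolution bound in Lemma \ref{lemleandro}, now exploiting the inclusion hypothesis $R-B-C\subset A$ to obtain a uniform pointwise lower bound on the set $R$. First I would write the triple convolution explicitly as a double integral, with the convention $(f\ast g)(x)=\int f(x-y)g(y)\,dy$, as
\begin{equation*}
	(\chi_{_{A}}\ast\chi_{_{B}}\ast\chi_{_{C}})(x)=\int_{\R^n}\int_{\R^n}\chi_{_{A}}(x-y-z)\,\chi_{_{B}}(y)\,\chi_{_{C}}(z)\,dy\,dz,
\end{equation*}
so that the integrand is nonnegative and supported on $y\in B$, $z\in C$, where it reduces to $\chi_{_{A}}(x-y-z)$.

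The key observation is that for every $x\in R$, $y\in B$ and $z\in C$ one has $x-y-z\in R-B-C\subset A$, whence $\chi_{_{A}}(x-y-z)=1$ on this whole range. Consequently, for all $x\in R$,
\begin{equation*}
	(\chi_{_{A}}\ast\chi_{_{B}}\ast\chi_{_{C}})(x)\geq\int_{B}\int_{C}1\,dz\,dy=|B|\,|C|.
\end{equation*}
Finally I would restrict the $L^2$-integral to $R$ and insert this pointwise lower bound, obtaining
\begin{equation*}
	\|\chi_{_{A}}\ast\chi_{_{B}}\ast\chi_{_{C}}\|_{L^2(\R^n)}^2\geq\int_{R}\bigl(|B|\,|C|\bigr)^2\,dx=\bigl(|B|\,|C|\bigr)^2\,|R|,
\end{equation*}
and taking square roots yields the claim, in fact with constant $1$.

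There is no genuine obstacle here: the estimate follows purely from nonnegativity of the integrand together with the inclusion $R-B-C\subset A$. The only points deserving a word of care are the measurability of the convolution (which legitimizes restricting the integral to $R$) and the bookkeeping of the convolution convention; both are routine. If one preferred an inductive presentation, the same scheme extends to any number of factors, with the two-fold case being precisely Lemma \ref{lemleandro}.
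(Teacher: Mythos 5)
Your proof is correct, and it is precisely the argument the paper has in mind: the paper states Lemma \ref{lemaelem} without proof, presenting it as ``a version of the elementary lemma above'' (Lemma \ref{lemleandro}, proved in \cite{Domingues2016}), whose proof is exactly this pointwise lower bound $(\chi_{_{A}}\ast\chi_{_{B}}\ast\chi_{_{C}})(x)\geq |B||C|$ for $x\in R$ followed by restriction of the $L^2$ integral to $R$. Nothing is missing; the measurability remark and the handling of the inclusion $R-B-C\subset A$ are exactly what is needed.
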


Now we move to the

\begin{proof}[Proof of Proposition \ref{ill-pos}]
	(a)	Suppose that $S^t$ is 3-times differentiable at the origin in $H^s\times H^k$. Because \eqref{ill03} and  \eqref{ill04} are the components of the third Gâteaux derivative at the origin,  we have  the same estimate  with the $H^s\times H^k$ norm of \eqref{ill03} or  \eqref{ill04} in the right-hand side of \eqref{ill02}. We start with the first component.  Considering  
	$A,B,C\subset\R$ bounded subsets and choosing $\phi_0,\psi_1,\psi_2\in\mathcal S(\R)$ such that\footnote
	{\ For $B$, bounded subset of $\R$, $\langle \cdot\rangle^l\widehat{\phi}\sim \chi_{_{B}}$ means
		$\chi_{_{B}}\leq\langle\cdot\rangle^l\,\widehat{\varphi}$\, with\, $\|\varphi\|_{H^l}\leq2\|\chi_{_{B}}\|_{L^2}$ 
	}
	$\langle\cdot\rangle^s\,\widehat{\phi_0}\sim \chi_{_{A}}$, $\langle\cdot\rangle^k\,\widehat{\psi_1} \sim \chi_{_{B}}$ and $\langle\cdot\rangle^k\,\widehat{\psi_2} \sim \chi_{_{C}}$, 
	we have that\footnote{\ $(*.*)_R\ $(or $(*.*)_L$) denotes the right(or left)-hand side of an equality or inequality numbered by $(*.*)$}
	\begin{align}
		\eqref{ill02}_{L}&\gtrsim  \left\| \int_0^t\frac{\langle \xi\rangle^s|\xi|}{\langle \xi_1 \rangle^s\langle\xi_2  \rangle^k\langle\xi_3\rangle^k} \cos(t'Q_{\alpha})\chi_{_{A}}(\xi_1)\chi_{_{B}}(\xi_2)\chi_{_{C}}(\xi_3)  d\xi_1d\xi_2dt'    \right\|_{L^2_{\xi}}\label{ill05}
	\end{align}
	where  $\xi_3:=\xi-\xi_1-\xi_2$ and 
	\begin{equation}\label{Q}
		Q_{\alpha}=Q_{\alpha}(\xi,\,\xi_1,\,\xi_2):=\xi^3-\xi_1^3-\alpha\xi_2^3-\alpha\xi_3^3.
	\end{equation} 
	So combining \eqref{ill05} with \eqref{ill02} we get 
	\begin{equation}\label{ill06}
		\sup_{t\in[0,\,T]}\left\| \int_0^t\frac{\langle \xi\rangle^s|\xi|}{\langle \xi_1 \rangle^s\langle\xi_2  \rangle^k\langle\xi_3\rangle^k} \cos(t'Q_{\alpha})\chi_{_{A}}(\xi_1)\chi_{_{B}}(\xi_2)\chi_{_{C}}(\xi_3)  d\xi_1d\xi_2dt'    \right\|_{L^2_{\xi}}\lesssim |A|^{1/2}|B|^{1/2}|C|^{1/2},
	\end{equation}
	for all $t\in [0,\,T]$. 
	In view of Lemma \ref{lemaelem}, now we must choose the sets $A$, $B$, $C$, $R$ and a sequence of times $t_N\in [0,\,T]$ in this way: for $N\in\N$ 
	\begin{equation*}\label{ABC*}
		A_N=\{\xi_1\in \R : |\xi_1|<1/2\}, 
		\quad\quad 
		B_N=\{\xi_2\in \R : |\xi_2|<1/4\}, \quad\quad C_N= \{\xi_3\in \R : |\xi_3-N|<1/8\}
	\end{equation*}
	\begin{equation*}\label{tn*}
		R_N=\{\xi\in \R : |\xi-N|<1/8\}  \quad\quad\textrm{and}\quad\quad 
		t_N=\frac {T}{2N^3(1+T)}.
	\end{equation*} 
	We have  $R_N-B_N-C_N\subset A_N$ and $t_N\in[0,\,T]$. 
	Because $\xi_1\in A_N,\,\xi_2\in B_N$ and $\xi_3\in C_N$, thus
	\begin{equation*}\label{ill07} 
		\frac{\langle \xi\rangle^s|\xi|}{\langle \xi_1 \rangle^s\langle\xi_2  \rangle^k\langle\xi_3\rangle^k}\ \sim\ N^{s-k+1} 
		\quad\quad\textrm{and}\quad\quad 
		\cos(t'Q_{\alpha})>1/2\,, 
		\ \ 
		\forall t'\in[0,t_N]. 
	\end{equation*} 
	From Lemma \ref{lemaelem} and \eqref{ill06} yields
	\begin{equation}\label{ill08} 
		t_N|R_N|^{\frac12}|B_N||C_N|\,N^{s-k+1}
		\  \lesssim\ 
		|A_N|^{1/2}|B_N|^{1/2}|C_N|^{1/2}, 
		\quad 
		\forall N\in\N.  
	\end{equation}
	Taking account that $|A_N|\sim|B_N|\sim|C_N|\sim |R_N|\sim 1$ and $t_N\sim N^{-3}$, then we have $s-k<2$.
	
	Now, dealing with the second component \eqref{ill04}, analogously as we did before, we have
	\begin{equation}\label{ill09}
		\sup_{t\in[0,\,T]}\left\| \int_0^t\frac{\langle \xi\rangle^k|\xi|}{\langle \xi_1 \rangle^s\langle\xi_2  \rangle^s\langle\xi_3\rangle^k} \cos(t'P_{\alpha})\chi_{_{A}}(\xi_1)\chi_{_{B}}(\xi_2)\chi_{_{C}}(\xi_3)  d\xi_1d\xi_2dt'    \right\|_{L^2_{\xi}}\lesssim |A|^{1/2}|B|^{1/2}|C|^{1/2},
	\end{equation}
	for all $t\in [0,\,T]$, where \begin{equation}\label{P}
		P_{\alpha}=P_{\alpha}(\xi,\,\xi_1,\,\xi_2)=\xi^3-\xi_1^3-\xi_2^3-\alpha\xi_3^3.
	\end{equation}
	Interchanging the rules of the sets $B_N$ and $C_N$ given before we can conclude that $k-s<2$. We finishes the proof of item (a).
	
	\hspace{-0.5cm}(b) In this case we are not able to take a sequence $t_N\to 0$, because $t$ is fixed number. We need to control de argument in the terms $\cos(t'Q_{\alpha})$ and $\cos(t'P_{\alpha})$, and we get this by making both $Q_{\alpha}$ and $P_{\alpha}$ suficiently small.  Fixing $t\in[0,\,T]$, by \eqref{ill01}, similarly what we did to get \eqref{ill06}, we have that
	\begin{equation}\label{ill10}
		\left\| \int_0^t\frac{\langle \xi\rangle^s|\xi|}{\langle \xi_1 \rangle^s\langle\xi_2  \rangle^k\langle\xi_3\rangle^k} \cos(t'Q_{\alpha})\chi_{_{A}}(\xi_1)\chi_{_{B}}(\xi_2)\chi_{_{C}}(\xi_3)  d\xi_1d\xi_2dt'    \right\|_{L^2_{\xi}}\lesssim |A|^{1/2}|B|^{1/2}|C|^{1/2},
	\end{equation}
	For $N\in\N$, considering the sets
	\begin{equation*}\label{AB**}
		A_N=\{\xi_1\in \R : |\xi_1-aN|<\varepsilon \langle t\rangle^{-1} N^{-2}\}, 
		\quad\quad 
		B_N=\{\xi_2\in \R : |\xi_2-bN|<\varepsilon \tfrac14 \langle t\rangle^{-1} N^{-2}\},  
	\end{equation*}
	\begin{equation*}\label{C**}
		C_N= \{\xi_3\in \R : |\xi_3-cN|<\varepsilon \tfrac14 \langle t\rangle^{-1} N^{-2}\} \quad\quad \textrm{and}\quad\quad R_N=\{\xi\in \R : |\xi-N|<\varepsilon \tfrac14 \langle t\rangle^{-1} N^{-2}\}  
	\end{equation*}
	where the constant $\varepsilon>0$ will be small, but fixed, and the positive constants $a$, $b$ and $c$ satisfies the conditions
	\begin{equation}\label{constcond01}
		\left\{\begin{array}{l}
			a+b+c=1,\\
			a^3+\alpha b^3+\alpha c^3=1.
		\end{array}\right.
	\end{equation}
	First we note that $R_N-B_N-C_N\subset A_N$ and $|A_N|\sim|B_N|\sim|C_N|\sim|R_N|\sim N^2$. Also, for $\xi_1\in A_N$, $\xi_2\in B_N$, $\xi_3\in C_N$ yields
	$$\frac{\langle \xi\rangle^s|\xi|}{\langle \xi_1 \rangle^s\langle\xi_2  \rangle^k\langle\xi_3\rangle^k}\sim N^{1-2k}$$
	and 
	\begin{align*}
		|Q_{\alpha}|&=|\xi^3-N^3+N^3-\xi_1^3-\alpha\xi_2^3-\alpha\xi_3^3|\leq |\xi^3-N^3|+|(a^3+\alpha b^3+\alpha c^3)N^3-\xi_1^3-\alpha\xi_2^3-\alpha\xi_3^3\\
		& \leq  |\xi^3-N^3|+|(aN)^3-\xi_1^3|+|\alpha||(bN)^3-\xi_2^3|+|\alpha||(cN)^3-\xi_3^3|\\
		&\lesssim_{_{a,b,|\alpha|}}\ \langle t'\rangle^{-1}\varepsilon, 
	\end{align*}
	where we use the elementary identity $A^3-B^3=(A-B)(A^2+AB+B^2)$. This implies the desired estimate $$\cos(t'Q_{\alpha})>1/2, \ \ \textrm{for all}\ \ t'\in [0,\,t].$$
	From the Lemma \ref{lemaelem} and \eqref{ill10} we get
	$$t(N^{-2})^{1/2}N^{-2}N^{-2}N^{1-2k}\lesssim (N^{-2})^{1/2}(N^{-2})^{1/2}(N^{-2})^{1/2},$$
	and so $k\geq -1/2$.
	For the second component \eqref{ill04}, in the same way as before, we have for a fixed $t$ in $[0,\,T]$ the following inequality
	\begin{equation}\label{ill11}
		\left\| \int_0^t\frac{\langle \xi\rangle^k|\xi|}{\langle \xi_1 \rangle^s\langle\xi_2  \rangle^s\langle\xi_3\rangle^k} \cos(t'P_{\alpha})\chi_{_{A}}(\xi_1)\chi_{_{B}}(\xi_2)\chi_{_{C}}(\xi_3)  d\xi_1d\xi_2dt'    \right\|_{L^2_{\xi}}\lesssim |A|^{1/2}|B|^{1/2}|C|^{1/2}.
	\end{equation}
	Of course, with the same choice of subsets $A_N$, $B_N$, $C_N$, etc. but with
	\begin{equation}\label{constcond02}
		\left\{\begin{array}{l}
			a+b+c=1,\\
			a^3+b^3+\alpha c^3=1,
		\end{array}\right.
	\end{equation}
	we can conclude that $s\geq -1/2$ and we finish the proof of  the item (b) and also the proof of the Theorem.
\end{proof}
%
%
%
%
%
%
%
\subsection{Failure of trilinear estimates}
\ \\
\vspace{-0.5cm}

In this subsection we will prove the failure of the trilinear estimates \eqref{tlint-m1} and \eqref{tlint-m2}. 

\begin{proof}[Proof of Proposition \ref{prop1ill}]
	We prove only item (a), because the item (b) follows analogously. Using definition of the $X_{s,b}$-norm  and Plancherel's identity,  the estimate \eqref{tlint-m1} is equivalent to 
	\begin{equation}\label{tril-fail3}
		\| \mathcal{T}_s(f,g,h) \|_{L^2_{\xi} L^2_{\tau}} \lesssim \|f\|_{L^2(\R^2)} \|g\|_{L^2(\R^2)}\|h\|_{L^2(\R^2)},
	\end{equation}
	where 
	\begin{equation}\label{trill-f4}
		\mathcal{T}_s(f,g,h):= \Big\|\langle\xi\rangle^s\langle\tau-\xi^3\rangle^{b'}\!\xi\!\int_{_{\!\R^4}}\! \dfrac{\widetilde{f}(\xi_1, \tau_1)\widetilde{g}(\xi_2, \tau_2) \widetilde{h}(\xi_3, \tau_3)}{\langle \xi_1 \rangle^s\langle \tau_1-\xi_1^3 \rangle^{b} \langle \xi_2 \rangle^k\langle \tau_2-{\alpha}\xi_2^3 \rangle^{b}\langle \xi_3 \rangle^k\langle \tau_3-\alpha\xi_3^3 \rangle^{b}}d\xi_1d\xi_2d\tau_1d\tau_2\Big\|_{L^2_{\xi\tau}},
	\end{equation}
	with $\xi_3=\xi-\xi_1-\xi_2$, $\tau_3=\tau-\tau_1-\tau_2$.
	
	Now, suppose that $s-2k>1$.	Let $c_2$ and $c_3$ be two constants satisfying
	\begin{equation}\label{cond-const}
		\begin{split}
			&c_2+c_3=1,\\
			&\alpha c_2^3+{\alpha}c_3^3 =1,
		\end{split}
	\end{equation}
	and let  $\sigma_1=\tau_1-\alpha\xi_1^3$, $\sigma_2=\tau_2-\alpha\xi_2^3$, $\sigma_3=\tau_3-\alpha\xi_3^3$  in order to  apply the Lemma \ref{lemaelem}, we define the sets
	\begin{equation*}
		A_N=\{(\xi_1, \tau_1) ;\ |\xi_1|< N^{-2},\ |\sigma_1|<C_\alpha\},  \quad B_N=\{(\xi_2, \tau_2) ;\ |\xi_2-c_2 N|< N^{-2}/3,\ |\sigma_2|<1\},
	\end{equation*}
	\begin{equation*}
		C_N=\{(\xi_3, \tau_3) ;\ |\xi_3-c_3 N|< N^{-2}/3,\  |\sigma_3|<1\} \ \ \  \textrm{and}\ \ \ R_N=\{(\xi, \tau) ;\ |\xi-N|< N^{-2}/3,\  |\tau- \xi^{3}|<1\}.
	\end{equation*}
	Then $R_N-B_N-C_N \subset A_N$. In fact , if $(\xi_1, \tau_1)=(\xi, \tau)-(\xi_2, \tau_2)-(\xi_3, \tau_3)$ with $(\xi, \tau) \in R_N$, $(\xi_2, \tau_2) \in B_N$ and $(\xi_3, \tau_3) \in C_N$, then using \eqref{cond-const}, we have
	$$
	|\xi_1|=|\xi-\xi_2-\xi_3| \leq |\xi-N|+|N-\xi_2-\xi_3| \leq \frac{1}3N^{-2}+|c_2N-\xi_2|+|c_3N-\xi_3| < N^{-2}.
	$$
	On the other hand using again \eqref{cond-const}
	\begin{equation*}
		\begin{split}
			|\sigma_1| =&|\tau_1 -\xi_1^3|= |\tau -\tau_2-\tau_3 -\xi_1^3|= |\tau-\xi^3+\xi^3-\sigma_2-\alpha\xi_2^3- \sigma_3-\alpha\xi_3^3-\xi_1^3|\\
			\leq & |\tau-\xi^3|+|\sigma_2|+|\sigma_3|+ |\xi_1|^3+|\xi^3-\alpha\xi_2^3-\alpha\xi_3^3|\\
			\leq & 3+N^{-6}+|\xi^3-N^3+\alpha N^3c_2^3 -\alpha\xi_2^3+\alpha N^3c_3^3-\alpha\xi_3^3|\\
			\lesssim & (1+|\alpha|).
		\end{split}
	\end{equation*}
	Consider $\widetilde{f}=\chi_{A_N}$, $\widetilde{g}=\chi_{B_N}$, $\widetilde{h}=\chi_{C_N}$. From the definition of $A_N$, $B_N$ and $C_N$ holds that $\langle  \xi_1\rangle \sim 1$, $\langle  \xi_2\rangle \sim N$,  $\langle  \xi_3\rangle \sim N$, $\langle  \xi \rangle \sim N$ and $\langle \sigma_j \rangle \sim 1$, $j=1,2,3$. By \eqref{tril-fail3} and Lemma \ref{lemaelem} we obtain
	\begin{equation*}
		\begin{split}
			N^{s-2k+1} \|R_N\|_{L^2(\R^2)} \|\chi_{B_N}\|_{L^1(\R^2)} \|\chi_{C_N}\|_{L^1(\R^2)}\leq& N^{s-2k+1} \|\chi_{A_N} \ast \chi_{B_N}\ast \chi_{C_N}\|_{L^2(\R^2)}\\
			\leq & \|\chi_{A_N} \|_{L^2(\R^2)}\| \chi_{B_N}\|_{L^2(\R^2)}\| \chi_{C_N}\|_{L^2(\R^2)}\end{split}
	\end{equation*}
	which implies
	$$
	NN^{s-2k} N^{-1} N^{-4} \lesssim N^{-3}
	$$
	and the last inequality is false if $s-2k>1$.

	Finally, if $k<-\frac12$, let $c_1$, $c_2$ and $c_3$ be three constants satisfying
	\begin{equation}\label{cond-constx}
		\begin{split}
			&c_1+c_2+c_3=1,\\
			& c_1^3+\alpha c_2^3+\alpha c_3^3 =1.
		\end{split}
	\end{equation}
	
	Analogously as above, if we consider the sets
	\begin{equation*}
		A_N=\{(\xi_1, \tau_1) ;\quad |\xi_1-c_1N|< N^{-2}, |\sigma_1|<C_{\alpha}\}, \quad  \quad B_N=\{(\xi_2, \tau_2) ;\quad |\xi_2-c_2N|< N^{-2}/3, |\sigma_2|<1\},
	\end{equation*}
	\begin{equation*}
		C_N=\{(\xi_3, \tau_3) ;\quad |\xi_3-c_3N|<  N^{-2}/3, |\sigma_3|<1\} \quad \textrm{and} \quad R_N=\{(\xi, \tau) ;\quad |\xi-N|< N^{-2}/3, |\tau-\xi^3|<1\}.
	\end{equation*}
	Then using the condition \eqref{cond-constx} we obtain
	$$
	|\xi_1-c_1N|=|\xi-\xi_2-\xi_3-c_1N| \leq |\xi-N|+|c_2N-\xi_2|+|c_3N-\xi_3| < N^{-2},
	$$
	as above and using the condition \eqref{cond-constx} again
	\begin{equation*}
		\begin{split}
			|\sigma_1| \leq & |\tau-\xi^3|+|\sigma_2|+|\sigma_3|+ |\xi^3-\xi_1^3-\alpha\xi_2^3-\alpha\xi_3^3|\\
			\lesssim & 3+|\xi^3-N^3|+ |c_1^3N^3- \xi_1^3|+|\alpha|\,|c_2^3N^3- \xi_2^3| +|\alpha|\,|c_3^3N^3- \xi_3^3|\\
			\lesssim & (1+|\alpha|).
		\end{split}
	\end{equation*}
	Thus $R_N-B_N-C_N \subset A_N$ and $\langle  \xi_j\rangle \sim N$, $j=1,2,3$, $\langle  \xi \rangle \sim N$ and $\langle \sigma_j \rangle \sim 1$, $j=1,2,3$. Again, by \eqref{tril-fail3} and Lemma \ref{lemaelem} we get
	\begin{equation*}
		\begin{split}
			\dfrac{N^{1+s}}{N^{s}N^{2k} }\|R_N\|_{L^2(\R^2)} \|\chi_{B_N}\|_{L^1(\R^2)} \|\chi_{C_N}\|_{L^1(\R^2)}\leq& \dfrac{N^{1+s}}{N^{s}N^{2k} } \|\chi_{A_N} \ast \chi_{B_N}\ast \chi_{C_N}\|_{L^2(\R^2)}\\
			\leq & \|\chi_{A_N} \|_{L^2(\R^2)}\| \chi_{B_N}\|_{L^2(\R^2)}\| \chi_{C_N}\|_{L^2(\R^2)}\end{split}
	\end{equation*}
	therefore
	$$
	NN^{-2k} N^{-1} N^{-4} \lesssim N^{-3}
	$$
	and the last inequality is false if $-2k>1$.\end{proof}

\begin{proof}[Proof of Proposition \ref{prop1illx1}]
	We only give the proof of (a), because (b) can be proved in the same way. We will consider the following sets
	\begin{equation*}
		A_N=\{(\xi_1, \tau_1) ;\quad |\xi_1|< 1, |\sigma_1| \sim N^3\}, \quad  \quad B_N=\{(\xi_2, \tau_2) ;\quad |\xi_2|< \frac12, |\sigma_2|<1\},
	\end{equation*}
	\begin{equation*}
		C_N=\{(\xi_3, \tau_3) ;\quad |\xi_3-N|<\frac14, |\sigma_3|<1\} \quad \textrm{and} \quad R_N=\{(\xi, \tau) ;\quad |\xi-N|<\frac14, |\tau-\xi^3|<1\}.
	\end{equation*}
	Then $R_N-B_N-C_N \subset A_N$ and $\langle  \xi_3\rangle \sim N$, $\langle  \xi_j \rangle \sim 1,  j=1,2$, $\langle  \xi \rangle \sim N$ and $\langle \sigma_j \rangle \sim 1$, $j=2,3$. Using \eqref{trill-f4} we obtain
	\begin{equation*}
		\begin{split}
			\dfrac{N^{1+s}}{N^{3b}N^{k} }\|R_N\|_{L^2(\R^2)} \|\chi_{B_N}\|_{L^1(\R^2)} \|\chi_{C_N}\|_{L^1(\R^2)}\leq& \dfrac{N^{1+s}}{N^{3b}N^{k} } \|\chi_{A_N} \ast \chi_{B_N}\ast \chi_{C_N}\|_{L^2(\R^2)}\\
			\leq & \|\chi_{A_N} \|_{L^2(\R^2)}\| \chi_{B_N}\|_{L^2(\R^2)}\| \chi_{C_N}\|_{L^2(\R^2)}\end{split}
	\end{equation*}
	therefore
	$$
	\dfrac{N^{1+s}}{N^{3b}N^{k} } \lesssim N^{3/2}
	$$
	and the last inequality is false if $s-k>1/2+3b$, $b=1/2+\epsilon$.
	
\end{proof}

For the next proof we will use another elementary result, proved in \cite{CP2019}.
\begin{lemma}\label{conv-lema2}
	Let $R_j:=[a_j, b_j]\times[c_j, d_j]\subset\mathbb{R}^2$, $j=1,\cdots,n$ be rectangles such that $ b_j-a_j=N$ and $d_j-c_j=M$. Then
	\begin{equation}\label{conv-e2}
		\|\chi_{R_1}*\chi_{R_2}*\cdots*\chi_{R_n}\|_{L^2(\mathbb{R}^2)}\sim (NM)^{n-\frac12}=|R_j|^{n-\frac12}.
	\end{equation}
\end{lemma}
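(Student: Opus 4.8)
The plan is to exploit the tensor-product structure of the rectangles to split the two-dimensional estimate into two independent one-dimensional ones, and then to strip away translations and scales so that each one-dimensional factor reduces to a single universal constant depending only on $n$. No genuine difficulty arises; the estimate is, at bottom, translation invariance together with the scaling behaviour of convolution.

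First I would use that each indicator factorizes, $\chi_{R_j}(x,y)=\chi_{[a_j,b_j]}(x)\,\chi_{[c_j,d_j]}(y)$, and that the two-dimensional convolution of product-type functions is again of product type: if $f(x,y)=f_1(x)f_2(y)$ and $g(x,y)=g_1(x)g_2(y)$, then $(f*g)(x,y)=(f_1*g_1)(x)\,(f_2*g_2)(y)$ by Fubini. An immediate induction then gives
\[
(\chi_{R_1}*\cdots*\chi_{R_n})(x,y)=F(x)\,G(y),
\]
where $F:=\chi_{[a_1,b_1]}*\cdots*\chi_{[a_n,b_n]}$ and $G:=\chi_{[c_1,d_1]}*\cdots*\chi_{[c_n,d_n]}$. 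Since the function $(x,y)\mapsto F(x)G(y)$ has $L^2(\R^2)$-norm equal to $\|F\|_{L^2(\R)}\|G\|_{L^2(\R)}$ (Fubini once more), it suffices to prove $\|F\|_{L^2(\R)}\sim N^{n-\frac12}$ and $\|G\|_{L^2(\R)}\sim M^{n-\frac12}$.

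Next I would remove the translations and the scale. Writing $\chi_{[a_j,b_j]}(x)=\chi_{[0,N]}(x-a_j)$ and noting that convolution converts the individual translations into a single translation of the output, $F$ is merely a translate of the $n$-fold convolution $\chi_{[0,N]}^{*n}$; as $\|\cdot\|_{L^2}$ is translation invariant, $\|F\|_{L^2}=\|\chi_{[0,N]}^{*n}\|_{L^2}$ (and likewise for $G$ with $M$). To extract the scale I would use $\chi_{[0,N]}(x)=\chi_{[0,1]}(x/N)$ together with the elementary identity $\big(f(\cdot/N)\big)*\big(g(\cdot/N)\big)=N\,(f*g)(\cdot/N)$; inducting over the $n-1$ convolutions gives $\chi_{[0,N]}^{*n}(x)=N^{n-1}\,\chi_{[0,1]}^{*n}(x/N)$. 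Taking $L^2$-norms and using $\|h(\cdot/N)\|_{L^2}=N^{1/2}\|h\|_{L^2}$ yields
\[
\|\chi_{[0,N]}^{*n}\|_{L^2}=c_n\,N^{\,n-\frac12},\qquad c_n:=\|\chi_{[0,1]}^{*n}\|_{L^2}.
\]

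Finally, $c_n$ is a fixed strictly positive and finite number depending only on $n$: the function $\chi_{[0,1]}^{*n}$ is the cardinal $B$-spline, a nonnegative continuous function supported on $[0,n]$ and not identically zero, so $0<c_n<\infty$ (with $c_1=1$). Combining the three reductions,
\[
\|\chi_{R_1}*\cdots*\chi_{R_n}\|_{L^2(\R^2)}=\|F\|_{L^2}\,\|G\|_{L^2}=c_n^{\,2}\,(NM)^{\,n-\frac12}=c_n^{\,2}\,|R_j|^{\,n-\frac12},
\]
which is exactly the asserted two-sided estimate. The only points that deserve a word of care are that the equivalence constant is permitted to depend on $n$ (it necessarily does, through $c_n$), and that one genuinely obtains the \emph{lower} bound as well as the upper one; both are automatic once $c_n$ is identified as a fixed positive constant, so this is where I would make sure the exact constant, rather than a mere inequality, is recorded.
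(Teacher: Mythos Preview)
Your argument is correct. The paper itself does not prove this lemma; it merely cites \cite{CP2019} (``another elementary result, proved in \cite{CP2019}''), so there is no in-paper proof to compare against. Your route---tensor factorization $\chi_{R_j}=\chi_{[a_j,b_j]}\otimes\chi_{[c_j,d_j]}$, reduction by translation invariance to $\chi_{[0,N]}^{*n}$, and then scaling to extract the exact factor $c_n\,N^{n-1/2}$ with $c_n=\|\chi_{[0,1]}^{*n}\|_{L^2}\in(0,\infty)$---is the natural and complete way to establish the two-sided estimate, and in fact yields an equality with explicit constant $c_n^{2}$ rather than a mere equivalence.
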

\vspace{0.5cm}

\begin{proof}[Proof of Proposition \ref{trilinendpoint}]
	We only prove that \eqref{x2.3 1} fails to hold. Using Plancherel's identity, 
	the estimate \eqref{x2.3 1} is equivalent to showing that
	\begin{equation}\label{xfail-be2}
		\begin{split}
			\mathcal{B}_s&:=\Big\| \langle \xi \rangle^{-\frac12}\langle \tau- \xi^3 \rangle^{-\frac12+4\epsilon}\!\int_{\R^4}\! \dfrac{\langle \xi_3 \rangle^{1/2}\langle \xi_2 \rangle^{1/2}\langle \xi_1 \rangle^{1/2}\tilde{f}(\xi_1, \tau_1)\tilde{g}(\xi_2, \tau_2)\tilde{h}(\xi_3, \tau_3) }{\langle \tau_1- \alpha\xi_1^3 \rangle^{\frac12+\epsilon} \langle \tau_2-\alpha\xi_2^3 \rangle^{\frac12-2\epsilon} \langle \tau_3-\xi_3^3 \rangle^{\frac12-2\epsilon}} d\xi_1 d\tau_1d\xi_2 d\tau_2 \Big\|_{L^2_{\xi\tau}} \\
			&\le \|f\|_{L^2(\R^2)} \|g\|_{L^2(\R^2)} \|h\|_{L^2(\R^2)},
		\end{split}
	\end{equation}
	where 
	$\widetilde{f}(\xi,\tau)=\langle\xi\rangle^{-\frac12}\langle\tau-\alpha\xi^3\rangle^{\frac12-2\epsilon}\widetilde{w_1}(\xi,\tau)$,  $\widetilde{g}(\xi,\tau)=\langle\xi\rangle^{-\frac12}\langle\tau-\alpha \xi^3\rangle^{\frac12+\epsilon} \widetilde{w_2}(\xi,\tau)$, $\widetilde{h}(\xi,\tau)=\langle\xi\rangle^{-\frac12}\langle\tau-\xi^3\rangle^{\frac12+\epsilon} \widetilde{v}(\xi,\tau)$, $\xi_1+\xi_2+\xi_3=\xi$ and $\tau_1+\tau_2+\tau_3=\tau$.

	We will construct functions $f$, $g$ and $h$ for which the estimate \eqref{xfail-be2} fails to hold for all $\epsilon>0$. 	Let $c_1$, $c_2$ and $c_3$ three numbers such that
	$$
	c_1+c_2+c_3=1, \quad \alpha c_1^3+\alpha c_2^3+ c_3^3=1,
	$$
	and consider two rectangles $R_1$, $R_2$ and  $R_3$  with centres respectively at $(c_1N, \alpha(c_1N)^3)$, $(c_2N, \alpha(c_2N)^3)$ and  $(c_3N, (c_3N)^3)$, and each with dimension $N^{-(2+r)}\times N^{-r}$, where $-2<r<0$. Now, consider $f$ and $g$ defined, via their Fourier transform, by	$\widetilde{f} = \chi_{R_1},  $ $\widetilde{g}= \chi_{R_2}$,  $ \widetilde{h}= \chi_{R_3}$.
	It is easy to see that 
	\begin{equation}\label{xfg-norms}
		\|f\|_{L^2(\R^2)} =\|g\|_{L^2(\R^2)}=\|h\|_{L^2(\R^2)}=N^{-(1+r)}.
	\end{equation}
	Also, 
	\begin{equation}\label{xfail-be5}
		| \xi_1-c_1N|\leq \frac{1}2N^{-(2+r)}, \qquad |\tau_1-\alpha (c_1N)^3|\leq \frac{1}2N^{-r},
	\end{equation}
	\begin{equation}\label{xfail-be6}
		|\xi_2-c_2 N|\leq \frac{1}2N^{-(2+r)}, \qquad |\tau_2-\alpha (c_2N)^3|\leq \frac{1}2N^{-r}
	\end{equation}
	and
	\begin{equation}\label{x1fail-be6}
		|\xi_3-c_3 N|\leq \frac{1}2N^{-(2+r)}, \qquad |\tau_3- (c_3N)^3|\leq \frac{1}2N^{-r}.
	\end{equation}
	We have $|\xi_j|\sim N$ and
	\begin{equation*}
		\begin{split}
			| \xi-N|&=| \xi_1+\xi_2+\xi_3-c_1N- c_2N-c_3N |\leq | \xi_1-c_1N |+| \xi_2-c_2N |+| \xi_3-c_3N |
			\leq \frac{3}{2}N^{-(2+r)}.
		\end{split}
	\end{equation*}
	Also, one can prove  that 
	\begin{equation*}
		\begin{split}
			|\tau-  \xi^3|=&|\tau_1+\tau_2+\tau_3-  (\alpha c_1^3+\alpha c_2^3+ c_3^3)N^3+  N^3- \xi^3|\\
			\leq &|\tau_1-\alpha (c_1N)^3|+|\tau_2-\alpha (c_2N)^3| +|\tau_3-  (c_3 N)^3|+\,|N^3- \xi^3|\\
			\leq &  \frac{3}2N^{-r}+ \,\frac{1}2N^{-(2+r)}|N^2+N\xi+ \xi^2|
			\lesssim   \,N^{-r},
		\end{split}
	\end{equation*}
	and for $j=1,2$:
	\begin{equation*}
		\begin{split}
			|\tau_j- \alpha \xi_j^3|=&|\tau_j- \alpha(c_jN)^3+ \alpha(c_jN)^3-\alpha \xi_j^3|\leq  |\tau_j-  \alpha(c_jN)^3|+\,|\alpha||(c_jN)^3- \xi_j^3|\\
			\leq &  \frac{1}2N^{-r}+ \frac{1}2N^{-(2+r)}|c_j^2N^2+c_jN\xi_j+ \xi_j^2|
			\lesssim   \,N^{-r}.
		\end{split}
	\end{equation*}
	Similarly 
	\begin{equation*}
		\begin{split}
			|\tau_3-  \xi_3^3|=&|\tau_3- (c_3N)^3+  (c_3N)^3- \xi_3^3|\leq  |\tau_2-  \alpha N^3|+|c_3N- \xi_3|\,|(c_3N)^2+(c_3N)\xi_3+ \xi_3^2|\\
			\lesssim &  \frac{1}2N^{-r}+ \,\frac{1}2N^{-(2+r)} N^2
			\lesssim   \,N^{-r}.
		\end{split}
	\end{equation*}
	Thus
	$$
	\langle \tau_j- \alpha \xi_j^3 \rangle \gtrsim N^{-r}, \,\,j=1,2, \quad \langle \tau_3-  \xi_3^3 \rangle \gtrsim N^{-r}, \,\, \langle \tau-  \xi^3 \rangle \gtrsim N^{-r}
	$$
	With these considerations, we get from \eqref{xfail-be2}
	\begin{equation}\label{fail-be81}
		\begin{split}
			\mathcal{B}_s(f,g) 
			&\sim \Big\| N^{\frac12-r (-\frac12+4\epsilon)}\int_{\R^4}\dfrac{ N^{\frac32}\chi_{R_1}(\xi_1, \tau_1)\chi_{R_2}(\xi_2, \tau_2)\chi_{R_3}(\xi_3, \tau_3) }{N^{-3(\frac12+\epsilon)r}}d\xi_1 d\tau_1d\xi_2 d\tau_2\Big\|_{L^2_{\xi\tau}(\R^2)}\\
			&\sim N^{2+2r-r \epsilon} \|\chi_{R_1}*\chi_{R_2}*\chi_{R_3}\|_{L^2(\R^2)}\\
			&= N^{2+2r-r \epsilon}|R_j|^{3-\frac12} = N^{2+2r-r \epsilon}N^{-5-5r}=N^{-3-r(3+ \epsilon)}.
		\end{split}
	\end{equation}
	Now, using  \eqref{xfg-norms} and  \eqref{fail-be81} in \eqref{xfail-be2}, 
	\begin{equation}\label{fail-be91}
		N^{-3-r(3+ \epsilon)}\lesssim N^{-3-3r} \Longleftrightarrow N^{-r\epsilon}\lesssim 1.
	\end{equation}
	
	Since $r<0$, if we choose $N$ large, the estimate \eqref{fail-be91} fails to hold whenever  $\epsilon>0$ and this completes the proof of the proposition.
\end{proof}
%
%
%
%
%

\secao{Failure of Bilinear estimates in Section 3}

In this section we will conclude that the Theorem \ref{loc-sys} is sharp in the sense that we cannot use the approach developed in Section \ref{tri}, to improves the Sobolev indices in Proposition \ref{prop1}.

\begin{proposition}\label{gail-b1}
	Let $\alpha \neq 0,1$. \\
	(a) If $s-k>-\frac12$ or $k<-1/2$ then  the following bilinear estimate
	\begin{equation}\label{fail-be1}
		\|uv\|_{L^2(\R^2)}\lesssim \|u\|_{X^{\alpha}_{k-s-\frac12, \frac12+\epsilon}}\|v\|_{X_{s, \frac12+\epsilon}},
	\end{equation}
	fails to hold.\\
	(b) If  $k-s>-\frac12$ or $s<-1/2$ then  the following bilinear estimate
	\begin{equation}\label{fail-be11}
		\|uv\|_{L^2(\R^2)}\lesssim \|u\|_{X_{s-k-\frac12, \frac12+\epsilon}}\|v\|_{X^{\alpha}_{k, \frac12+\epsilon}},
	\end{equation}
	fails to hold.
\end{proposition}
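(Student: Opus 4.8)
The plan is to argue by contradiction, in the spirit of the proofs of Propositions \ref{prop1ill} and \ref{trilinendpoint}: assuming the bilinear estimate holds, I would test it against explicit one--parameter families of functions and extract a power of $N$ that cannot stay bounded. I only treat (a), since (b) follows after interchanging the roles of $(s,X)$ and $(k,X^{\alpha})$ together with the two dispersion relations $\tau-\alpha\xi^3\leftrightarrow\tau-\xi^3$. First I would pass to Fourier variables: by Plancherel $\|uv\|_{L^2}=\|\tilde u*\tilde v\|_{L^2}$, so setting $\tilde f=\langle\xi\rangle^{k-s-\frac12}\langle\tau-\alpha\xi^3\rangle^{\frac12+\epsilon}\tilde u$ and $\tilde g=\langle\xi\rangle^{s}\langle\tau-\xi^3\rangle^{\frac12+\epsilon}\tilde v$, the estimate \eqref{fail-be1} is equivalent to
\begin{equation*}
\Big\|\int_{\R^2}\frac{\tilde f(\xi_1,\tau_1)\,\tilde g(\xi_2,\tau_2)}{\langle\xi_1\rangle^{k-s-\frac12}\langle\sigma_1\rangle^{\frac12+\epsilon}\langle\xi_2\rangle^{s}\langle\sigma_2\rangle^{\frac12+\epsilon}}\,d\xi_1\,d\tau_1\Big\|_{L^2_{\xi\tau}}\lesssim \|f\|_{L^2}\|g\|_{L^2},
\end{equation*}
where $\xi_2=\xi-\xi_1$, $\tau_2=\tau-\tau_1$, $\sigma_1=\tau_1-\alpha\xi_1^3$ and $\sigma_2=\tau_2-\xi_2^3$. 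Choosing $\tilde f=\chi_{A_N}$, $\tilde g=\chi_{B_N}$ with $\langle\sigma_j\rangle\sim1$ on the supports, the left side is comparable to $N_1^{-(k-s-\frac12)}N_2^{-s}\,\|\chi_{A_N}*\chi_{B_N}\|_{L^2}$, where $N_j$ is the size of $\langle\xi_j\rangle$ on the support, while Lemma \ref{lemleandro} gives $\|\chi_{A_N}*\chi_{B_N}\|_{L^2}\gtrsim|B_N|\,|R_N|^{1/2}$ as soon as $R_N-B_N\subset A_N$. Since the modulations stay $O(1)$, the parameter $\epsilon$ drops out and the failure is robust in $b$.

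Next I would produce two families of sets, one for each hypothesis. For the regime governed by $s-k$, I localize the $X^{\alpha}$ factor at high frequency and the $X$ factor at low frequency: take $A_N=\{\,|\xi_1-N|<2N^{-2},\ |\sigma_1|<C\,\}$, $B_N=\{\,|\xi_2|<N^{-2},\ |\sigma_2|<1\,\}$ and $R_N=\{\,|\xi-N|<N^{-2},\ |\tau-\alpha N^3|<C\,\}$, so that $N_1\sim N$, $N_2\sim1$ and $|A_N|\sim|B_N|\sim|R_N|\sim N^{-2}$. The width $N^{-2}$ is forced: writing $\sigma_1=(\tau-\tau_2)-\alpha\xi_1^3$ and using $\xi^3-\xi_2^3=\xi_1(\xi^2+\xi\xi_2+\xi_2^2)$, the cubic terms cancel up to $O(N^2\cdot\text{width})$, which is $O(1)$ exactly when the frequency width is $N^{-2}$; this is precisely what makes $R_N-B_N\subset A_N$ hold with all modulations bounded. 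Feeding the measures into the displayed inequality yields $N^{s-k-3/2}\lesssim N^{-1}$, impossible for large $N$ when $s-k>1/2$. For the hypothesis $k<-1/2$ I instead put both factors at high frequency: fix $c_1+c_2=1$ (both nonzero), localize $\xi_1\sim c_1N$, $\xi_2\sim c_2N$ with frequency widths $N^{-2}$, modulations $O(1)$, and center $R_N$ at $\tau\sim(\alpha c_1^3+c_2^3)N^3$; now $N_1\sim N_2\sim N$ and the same count gives $N^{-k-3/2}\lesssim N^{-1}$, which fails for large $N$ when $k<-1/2$. Note that these two ranges are exactly the complement of the region $R_1$ of Proposition \ref{bil-LRX-1} in which \eqref{fail-be1} was established, so the result expresses the sharpness of that estimate.

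Part (b) is identical after swapping $s\leftrightarrow k$ and the two dispersion relations, producing the necessary inequalities $N^{k-s-3/2}\lesssim N^{-1}$ and $N^{-s-3/2}\lesssim N^{-1}$, hence failure of \eqref{fail-be11} when $k-s>1/2$ or $s<-1/2$. The only genuinely delicate step is verifying the inclusion $R_N-B_N\subset A_N$ simultaneously in the frequency and in the modulation variables: one must check that the cubic phase defining $\sigma_1$ is constant up to $O(1)$ across the chosen boxes, and it is this requirement that dictates both the $N^{-2}$ frequency scale and the centering of $R_N$. Once this containment is secured, Lemma \ref{lemleandro} together with the elementary power counting above closes the argument in each case.
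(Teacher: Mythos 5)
Your proposal is correct and follows essentially the same route as the paper's own proof: the Plancherel reduction to a weighted convolution bound, characteristic functions of boxes of frequency width $N^{-2}$ with $O(1)$ modulations, Lemma \ref{lemleandro}, and power counting, yielding failure for $s-k>1/2$ or $k<-1/2$ in (a) (and symmetrically for (b)) --- exactly the thresholds the paper's proof produces, which confirms that the exponent ``$-\tfrac12$'' in the statement of Proposition \ref{gail-b1} is a misprint for ``$\tfrac12$'' (as literally written it would contradict Proposition \ref{bil-LRX-1}). One point in your favor: in the high--low regime your $R_N$, centered on the $\alpha$-dispersion curve (i.e.\ $|\tau-\alpha N^3|\lesssim 1$, so $|\tau-\alpha\xi^3|\lesssim1$), is the choice that actually makes $R_N-B_N\subset A_N$ with $\sigma_1=\tau_1-\alpha\xi_1^3$ bounded, whereas the paper's printed $R_N$, with constraint $|\tau-\alpha \xi^3+(1-\alpha)\xi^2N|<1$, lies at distance $\sim|1-\alpha|N^3$ from that curve (its verification rests on a $\sigma_1+\sigma_2$ identity corresponding to swapped dispersion relations), so the paper's first case only goes through after correcting this apparent misprint --- an issue your construction silently avoids.
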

\begin{proof} As before, we only prove item (a).	Using Plancherel's identity, 
	the estimate \eqref{bil-2} is equivalent to showing that
	\begin{equation}\label{fail-be2}
		\mathcal{B}_s(f,g):=\Big\| \int_{\R^2} \dfrac{\langle \xi_1 \rangle^{s-k+1/2}\widetilde{f}(\xi_2, \tau_2)\widetilde{g}(\xi_1, \tau_1) }{\langle \xi_2\rangle^s\langle \tau_1-\alpha\xi_1^3 \rangle^{b} \langle \tau_2-\xi_2^3 \rangle^{b} } d\xi_1 d\tau_1 \Big\|_{L^2_{\xi\tau}(\R^2)} \lesssim \|f\|_{L^2(\R^2)} \|g\|_{L^2(\R^2)},
	\end{equation}
	where $b=\frac12+\epsilon$. 
	
	Let $\sigma_1=\tau_1-\alpha\xi_1^3$, $\sigma_2=\tau_2-\xi_2^3$, we define the sets
	\begin{equation*}
		A_N=\{(\xi_1, \tau_1) ;\quad |\xi_1-N|< N^{-2}, |\sigma_1|<C_\alpha\} \quad \textrm{and} \quad B_N=\{(\xi_2, \tau_2) ;\quad |\xi_2|< (2N)^{-2}, |\sigma_2|<1\},
	\end{equation*}
	and 
	\begin{equation*}
		R_N=\{(\xi, \tau) ;\quad |\xi-N|< (2N)^{-2}, |\tau-\alpha \xi^3+(1-\alpha)\xi^2N|<1\}.
	\end{equation*}
	Then $R_N-B_N \subset A_N$. In fact, if $(\xi_1, \tau_1)=(\xi, \tau)-(\xi_2, \tau_2)$ with $(\xi, \tau) \in R_N$ and $(\xi_2, \tau_2) \in B_N$, then
	$$
	|\xi_1-N|= |\xi -\xi_2 -N|\leq |\xi  -N|+|\xi_2|< N^{-2},
	$$
	also observe that $\sigma_1+ \sigma_2= \tau -\alpha \xi^3-(1-\alpha)\xi_1^3+3 \alpha \xi \xi_1\xi_2$, thus
	\begin{equation}
		\begin{split}
			|\sigma_1| \leq & |\sigma_2| +|\tau -\alpha \xi^3-(1-\alpha)\xi_1^3+3 \alpha \xi \xi_1\xi_2|\\
			\leq &1+|\tau-\alpha \xi^3+(1-\alpha)\xi^2N|+|1-\alpha|\, |\xi^2N -\xi_1^3|+3|\alpha \xi \xi_1\xi_2|\\
			\leq &2+|1-\alpha|\,\left[ N|(\xi-\xi_1)(\xi+\xi_1)|+ \xi_1^2|N-\xi_1| \right]+3|\alpha \xi \xi_1\xi_2|\\
			\leq &C_\alpha.
		\end{split}
	\end{equation}
	Consider $\widetilde{f}=\chi_{A_N}$, $\widetilde{g}=\chi_{B_N}$. We have $\langle  \xi_1\rangle \sim N$, $\langle  \xi_2\rangle \sim 1$, $\langle \sigma_j \rangle \sim 1$, $j=1,2$. From \eqref{fail-be2} and Lemma \ref{lemleandro}, we obtain
	\begin{equation*}
		\begin{split}
			N^{s-k+1/2} \|R_N\|_{L^2(\R^2)} \|\chi_{B_N}\|_{L^1(\R^2)}\leq N^{s-k+1/2} \|\chi_{A_N} \ast \chi_{B_N}\|_{L^2(\R^2)}\leq
			\|\chi_{A_N} \|_{L^2(\R^2)}\| \chi_{B_N}\|_{L^2(\R^2)}\end{split}
	\end{equation*}
	
	which implies
	$$
	N^{s-k+1/2} N^{-1} N^{-2} \lesssim N^{-2}
	$$
	and the last inequality is false if $s-k>1/2$.
	
	Analogously, if we consider the sets
	\begin{equation*}
		A_N=\{(\xi_1, \tau_1) ;\ |\xi_1-N|< N^{-2}, |\tau_1- \alpha N^{3}|<2\},
	\end{equation*}
	\begin{equation*}\ B_N=\{(\xi_2, \tau_2) ;\ |\xi_2+N|< (2N)^{-2}, |\tau_2+N^{3}|<1\}
	\end{equation*}
	and 
	\begin{equation*}
		R_N=\{(\xi, \tau) ;\quad |\xi|< (2N)^{-2}, |\tau-(\alpha-1)N^{3}|<1\},
	\end{equation*}
	then $R_N-B_N \subset A_N$ and 
	$$
	|\tau_1-\alpha \xi_1^{3}|\leq |\tau_1-\alpha N^{3}+\alpha (N^{3}-\xi_1^{3})| \lesssim 2+|\alpha|,
	$$
	$$
	|\tau_2-\xi_2^{3}|\leq |\tau_2+N^{3}- (N^{3}+\xi_2^{3})| \lesssim 1.
	$$
	Noting that 
	$$
	\langle  \xi_1\rangle \sim N, \,\, \langle  \xi_2\rangle \sim N, \,\,\langle \sigma_j \rangle \sim 1, \,\,j=1,2, 
	$$
	from \eqref{fail-be2} we obtain
	\begin{equation*}
		\begin{split}
			N^{-k+1/2} \|R_N\|_{L^2(\R^2)} \|\chi_{B_N}\|_{L^1(\R^2)}\leq N^{-k+1/2} \|\chi_{A_N} \ast \chi_{B_N}\|_{L^2(\R^2)}\leq
			\|\chi_{A_N} \|_{L^2(\R^2)}\| \chi_{B_N}\|_{L^2(\R^2)},\end{split}
	\end{equation*}
	which implies
	$$
	N^{-k+1/2} N^{-1} N^{-2} \lesssim N^{-2}
	$$
	and the last inequality is false if $k<-1/2$. 
	
\end{proof}

At the endpoint we have the following result.

\begin{proposition}\label{gail-b3}
	Let $\alpha \neq 0, 1$, then  the following bilinear estimate
	\begin{equation}\label{fail-be3}
		\|uv\|_{L^2(\R^2)} \lesssim \|u\|_{X_{-\frac12, \frac12-2\epsilon}^{\alpha}}\|v\|_{X_{-\frac12, \frac12+\epsilon}}
	\end{equation}
	fails to hold for all $\epsilon >0$.
\end{proposition}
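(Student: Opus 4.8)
The plan is to dualize \eqref{fail-be3} by Plancherel, exactly as in the proof of Proposition \ref{gail-b1}, and then to exhibit an explicit family of counterexamples built from thin rectangles whose modulation width is a genuine power of $N$. Writing $\widetilde{f}=\langle\xi\rangle^{-1/2}\langle\tau-\alpha\xi^3\rangle^{1/2-2\epsilon}\widehat{u}$ and $\widetilde{g}=\langle\xi\rangle^{-1/2}\langle\tau-\xi^3\rangle^{1/2+\epsilon}\widehat{v}$, so that $\|f\|_{L^2}=\|u\|_{X^{\alpha}_{-1/2,1/2-2\epsilon}}$ and $\|g\|_{L^2}=\|v\|_{X_{-1/2,1/2+\epsilon}}$, the estimate \eqref{fail-be3} is equivalent to
\[
\mathcal{B}_s(f,g):=\Big\| \int_{\R^2} \frac{\langle\xi_1\rangle^{1/2}\langle\xi_2\rangle^{1/2}\widetilde{f}(\xi_1,\tau_1)\widetilde{g}(\xi_2,\tau_2)}{\langle\tau_1-\alpha\xi_1^3\rangle^{1/2-2\epsilon}\langle\tau_2-\xi_2^3\rangle^{1/2+\epsilon}}\,d\xi_1\,d\tau_1\Big\|_{L^2_{\xi\tau}}\lesssim \|f\|_{L^2}\|g\|_{L^2},
\]
with $\xi_2=\xi-\xi_1$ and $\tau_2=\tau-\tau_1$. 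The key observation is that the constant‑modulation boxes used for Proposition \ref{gail-b1} (where $\langle\sigma_j\rangle\sim1$) produce a scaling that is exactly critical at $s=k=-1/2$ and therefore cannot see the role of $\epsilon$; to surface the gap between the two modulation exponents $1/2-2\epsilon$ and $1/2+\epsilon$, I will instead follow the rectangle construction of Proposition \ref{trilinendpoint} and let the modulations grow like a power of $N$.

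Concretely, fix $-2<r<0$ and, for $N\gg1$, take $\widetilde{f}=\chi_{R_1}$ and $\widetilde{g}=\chi_{R_2}$, where $R_1$ and $R_2$ are congruent rectangles of dimension $N^{-(2+r)}\times N^{-r}$ (frequency $\times$ modulation) centred on the dispersion curves $\tau=\alpha\xi^3$ and $\tau=\xi^3$ at frequency comparable to $N$, e.g.\ at $(N,\alpha N^3)$ and $(N,N^3)$. Using the identity $A^3-B^3=(A-B)(A^2+AB+B^2)$ precisely as in Proposition \ref{trilinendpoint}, the frequency width $N^{-(2+r)}$ is chosen so that on $R_1$ one has $|\tau_1-\alpha\xi_1^3|\lesssim N^{-r}$, and likewise $|\tau_2-\xi_2^3|\lesssim N^{-r}$ on $R_2$; simultaneously $\langle\xi_1\rangle\sim\langle\xi_2\rangle\sim N$. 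Because there is no output modulation weight inside $\|uv\|_{L^2}$, no resonance condition on the centres is required, so the construction applies uniformly to every $\alpha\neq0,1$ of either sign.

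With these localizations the two modulation weights are uniformly $\lesssim N^{-r}$ on the supports, so they can be bounded from below, and the remaining convolution is controlled by Lemma \ref{conv-lema2} (with $n=2$), giving $\|\chi_{R_1}*\chi_{R_2}\|_{L^2}\sim|R_j|^{3/2}=N^{-3-3r}$. Collecting the powers,
\[
\mathcal{B}_s(f,g)\gtrsim \frac{N^{1/2}N^{1/2}}{(N^{-r})^{1/2-2\epsilon}(N^{-r})^{1/2+\epsilon}}\,\|\chi_{R_1}*\chi_{R_2}\|_{L^2}=N^{\,1+r(1-\epsilon)}N^{-3-3r}=N^{-2-2r-r\epsilon},
\]
whereas $\|f\|_{L^2}\|g\|_{L^2}=|R_1|^{1/2}|R_2|^{1/2}=N^{-2-2r}$. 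Hence \eqref{fail-be3} would force $N^{-r\epsilon}\lesssim1$, which is false as $N\to\infty$ for every $\epsilon>0$ since $r<0$. The only delicate point — and the step I expect to require the most care — is verifying that the single frequency scale $N^{-(2+r)}$ keeps \emph{both} modulations of size $\lesssim N^{-r}$ while simultaneously maintaining $\langle\xi_j\rangle\sim N$; this is exactly what makes the two unequal modulation exponents combine into the decisive factor $N^{-r\epsilon}$, the mechanism that is invisible to the $O(1)$‑modulation boxes of Proposition \ref{gail-b1}.
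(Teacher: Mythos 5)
Your proposal is correct and follows essentially the same route as the paper's own proof: the identical Plancherel dualization, the same pair of rectangles of dimension $N^{-(2+r)}\times N^{-r}$ (with $-2<r<0$) centred at $(N,\alpha N^3)$ and $(N,N^3)$, the same appeal to Lemma \ref{conv-lema2}, and the same power count yielding $N^{-2-2r-r\epsilon}\lesssim N^{-2-2r}$, hence the contradiction $N^{-r\epsilon}\lesssim 1$. Your side remarks (no resonance condition being needed on the centres, and the cubic factorization controlling both modulations at the single frequency scale) match what the paper implicitly uses, so there is nothing to correct.
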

\begin{proof}
	Using Plancherel's identity, 
	the estimate \eqref{fail-be1} is equivalent to showing that
	\begin{equation}\label{fail-be2x}
		\mathcal{B}_s(f,g):=\Big\| \int_{\R^2} \dfrac{\langle \xi_2 \rangle^{1/2}\langle \xi_1 \rangle^{1/2}\tilde{f}(\xi_2, \tau_2)\tilde{g}(\xi_1, \tau_1) }{\langle \tau_1- \xi_1^3 \rangle^{\frac12+\epsilon} \langle \tau_2-\alpha\xi_2^3 \rangle^{\frac12-2\epsilon} } d\xi_1 d\tau_1 \Big\|_{L^2_{\xi\tau}(\R^2)} \le \|f\|_{L^2(\R^2)} \|g\|_{L^2(\R^2)},
	\end{equation}
	where 
	$\widetilde{f}(\xi,\tau)=\langle\xi\rangle^{-\frac12}\langle\tau-\alpha\xi^3\rangle^{\frac12-2\epsilon}\widetilde{u}(\xi,\tau)$,  $\widetilde{g}(\xi,\tau)=\langle\xi\rangle^{-\frac12}\langle\tau-\xi^3\rangle^{\frac12+\epsilon} \widetilde{v}(\xi,\tau)$, $\xi_2=\xi-\xi_1$ and $\tau_2=\tau-\tau_1$.

	We will construct functions $f$ and $g$ for which the estimate \eqref{fail-be2x} fails to hold for all $\epsilon>0$. 
	
	Consider two rectangles $R_1$ and  $R_2$  centered respectively at $(N, \alpha N^3)$, and  $(N, N^3)$, and each with dimension $N^{-(2+r)}\times N^{-r}$, where $-2<r<0$. Now, let $f$ and $g$ defined, via their Fourier transform, by
	$\widetilde{f} = \chi_{R_1}$ and  $ \widetilde{g}= \chi_{R_2}$.	It is easy to see that 
	\begin{equation}\label{fg-norms}
		\|f\|_{L^2(\R^2)} =\|g\|_{L^2(\R^2)}=N^{-(1+r)}.
	\end{equation}
	Also, 
	\begin{equation}\label{fail-be5}
		| \xi_1-N|\leq \frac{1}2N^{-(2+r)}, \qquad |\tau_1- N^3|\leq \frac{1}2N^{-r}
	\end{equation}
	and
	\begin{equation}\label{fail-be6}
		|\xi_2-N|\leq \frac{1}2N^{-(2+r)}, \qquad |\tau_2-\alpha N^3|\leq \frac{1}2N^{-r}.
	\end{equation}
	
	We have $|\xi_j|\sim N$.
	Also, we have   that 
	\begin{equation*}
		\begin{split}
			|\tau_1- \xi_1^3|=&|\tau_1- N^3+ N^3- \xi_1^3|
			\leq  |\tau_1-  N^3|+\,|N^3- \xi_1^3|\\
			\leq &  \frac{1}2N^{-r}+ \frac{1}2N^{-(2+r)}|N^2+N\xi_1+ \xi_1^2|\\
			\lesssim &  \,N^{-r}.
		\end{split}
	\end{equation*}
	Similarly, 
	\begin{equation*}
		\begin{split}
			|\tau_2- \alpha \xi_2^3|=&|\tau_2- \alpha N^3+ \alpha N^3-\alpha \xi_2^3|
			\leq  |\tau_2-  \alpha N^3|+|\alpha|\,|N^3- \xi_2^3|\\
			\leq &  \frac{1}2N^{-r}+ |\alpha|\,\frac{1}2N^{-(2+r)}|N^2+N\xi_2+ \xi_2^2|\\
			\lesssim &  \,N^{-r}.
		\end{split}
	\end{equation*}
	
	With these considerations, we get from \eqref{fail-be2x} and Lemma \ref{conv-lema2}
	\begin{equation}\label{fail-be8}
		\begin{split}
			\mathcal{B}_s(f,g) 
			&\gtrsim \Big\| N^{1+r-r \epsilon}\int_{\R^2} \chi_{R_1}(\xi_1, \tau_1)\chi_{R_2}(\xi_2, \tau_2) d\xi_1 d\tau_1 \Big\|_{L^2_{\xi\tau}(\R^2)}\\
			&\sim N^{1+r-r \epsilon} \|\chi_{R_1}*\chi_{R_2}\|_{L^2(\R^2)}\\
			&= N^{1+r-r \epsilon}|R_j|^{2-\frac12} = N^{1+r-r \epsilon}N^{-3-3r}=N^{-2-r(2+ \epsilon)}.
		\end{split}
	\end{equation}
	
	Now, using  \eqref{fg-norms} and  \eqref{fail-be8} in \eqref{fail-be2}, 
	\begin{equation}\label{fail-be9}
		N^{-2-r(2+ \epsilon)} \lesssim N^{-2-2r} \Longleftrightarrow N^{-r\epsilon}\lesssim 1.
	\end{equation}
	
	Since $r<0$, if we choose $N$ large, the estimate \eqref{fail-be9} fails to hold whenever  $\epsilon>0$ and this completes the proof of the proposition.
\end{proof}

%

\end{document}